\title{Multivariate Frequent Stability and Diam-Mean Equicontinuity}
\author{Lino Haupt}
\date{\today}
\begin{document}
\begin{abstract}
	In this paper, we introduce and investigate multivariate versions of frequent stability and diam-mean equicontinuity.
	Given a natural number $m > 1$, we call those notions \enquote{frequent $m$-stability} and \enquote{diam-mean $m$-equicontinuity}.
	We use these dynamical rigidity properties to characterise systems whose factor map to the maximal equicontinuous factor (MEF) is finite-to-one for a
	residual set, called \enquote{almost finite-to-one extensions}, or a set of full measure, called \enquote{almost surely finite-to-one extensions}.

	In the case of a $\sigma$-compact, locally compact, abelian acting group
	it is shown that frequently $(m+1)$-stable systems are equivalently characterised as almost $m$-to-one extensions of their MEF.
	Similarly, it is shown that a system is diam-mean $(m+1)$-equicontinuous if and only if it is an almost surely $m$-to-one extension of its MEF.
\end{abstract}

\maketitle
\tableofcontents
\newpage
\section*{Introduction}
Dynamical systems are broadly categorised into two classes: chaotic and ordered.
The transition between chaos and order is a rich area of study,
explored through various perspectives, contexts, and definitions.
The paradigmatic examples of ordered systems are the \textit{equicontinuous} ones.
They are precisely those exhibiting \textit{continuous} dependence on initial conditions.
According to the classical dichotomy established by \textsc{Auslander} and \textsc{Yorke} \cite{originalauslanderyorkedichotomy},
a minimal system is either equicontinuous or exhibits \textit{sensitive} dependence on initial conditions,
a hallmark of chaos.
However, not all systems with this sensitive dependence are considered truly chaotic.
For the study of those systems, the \textit{maximal equicontinuous factor} (MEF) plays a critical role as the best equicontinuous approximation.
Understanding the factor map to the MEF offers insights into dynamical rigidity and the nature of their potentially chaotic behavior.

In order to better understand the transition between chaos and order various \textit{weak} versions of equicontinuity have been proposed.
Such equicontinuity properties are \enquote{mean equicontinuity}, introduced in \cite{Fomin1951} under the name \enquote{mean $L$-stability}, and \enquote{frequent stability} and \enquote{diam-mean equicontinuity}, both introduced in \cite{diammeanequicts}.
In an \enquote{equicontinuous} system \textit{all} iterates of close starting point to remain close together.
Moreover, in a \enquote{mean equicontinuous} system the iterates of close starting points only are close \textit{on average}.
Furthermore, in a \enquote{diam-mean equicontinuous} system the \textit{diameter} of the iterates of a small ball is small \textit{on average}.
Finally, in a \enquote{frequently stable} system the diameter of the iterates of a small ball shall be small for a \textit{positive density} of times.

Common to all of those notions is that they study the distance between the iterates of \textit{two} starting points or the diameter,
as the maximal distance that any \textit{pair} of points exhibit, of the iterates of a small ball.
Furthermore, they are characterised as extensions of the MEF which are \textit{one-to-one} in a suitable sense.
Recently, there has been an effort to generalise those notions to a \textit{multivariate} setting,
\ie by studying multivariate distances between \textit{multiple} starting points or the multivariate diameter,
as the maximal multivariate distance that any \textit{tuple} of points exhibit, of the iterates of a small ball.
Furthermore, the goal is to characterise those as extensions of the MEF which are \textit{finite-to-one} in a suitable sense.
\enquote{Multivariate equicontinuity} and related notions have recently been studied in \cite{FelipeMultivariateEquicontinuity}.
Among other things the authors prove that a system is $m$-equicontinuous if and only if the fibers to the MEF contain at most $m-1$
points.
As they additionally prove an Auslander-Yorke type dichotomy they recover \cite[Theorem 3.6]{Shao2008}.
\enquote{Multivariate mean equicontinuity} has been studied in \cite{breitenbücher2024multivariatemeanequicontinuityfinitetoone}.
In this paper we introduce and study both \enquote{multivariate frequent stability} and \enquote{multivariate diam-mean equicontinuity} and characterise them by suitable finite-to-one extension properties.

In order to be more precise let us fix some notation.
Formally, a topological dynamical system (tds) is an action $\alpha$ of a locally compact group acting $G$ on a compact metric space $X$, written as a triple $(X,\alpha,G)$.
In order to keep this introduction simple we will here (unlike the rest of the paper) only consider $\Z$ actions.
We write them as a pair $(X,\varphi)$,
where $X$ is a compact metric space and $\varphi: X \to X$ is a homeomorphism.

In the past years, there has been interest in linking and characterising the above mentioned weak equicontinuity properties
through the invertibility properties of the factor map to the MEF.

Formally, a system $(X,\varphi)$ is called \emph{mean equicontinuous} if 
for any $\varepsilon > 0$ there is $\delta > 0$ such that
$d(x,y) < \delta$ implies that 
\[ \limsup_{N \to \infty} \frac{1}{N} \sum_{n=0}^{N-1} d\kl \varphi^n(x), \varphi^n(y) \kr < \varepsilon \,.\]
Those systems are exactly those whose factor map to the MEF is measure-theoretically an isomorphism,
as proven in \cite{isoext} by \textsc{Downarowicz} and \textsc{Glasner} (for the non-minimal case see \cite{topoiso}).\footnote{Those works built on \cite{LI_TU_YE_2015}.}

Moreover, a system $(X,\varphi)$ is called \emph{frequently stable} if
for any $\varepsilon>0$ there is a $\delta > 0$ such that
\[ \ml n \in \N \mm \diam\kl \varphi^n(B_\delta(x)) \kr < \varepsilon \mr \]
has positive density for any $x \in X$, where $\diam(A)$ is its usual diameter.
Recently, \textsc{Xu} and \textsc{Hu} \cite{frequentlystableisalmostautomorphic} 
proved that a minimal system is frequently stable if and only if the factor map to the MEF is almost 1:1.

Finally, $(X,\varphi)$ is called \emph{diam-mean equicontinuous} if
for any $\varepsilon>0$ there is a $\delta > 0$ such that
\[ \limsup_{N \to \infty} \frac{1}{N} \sum_{n=0}^{N-1} \diam\kl \varphi^n(B_\delta(x)) \kr < \varepsilon \]
for all $x \in X$.
Again in \cite{diammeanequicts}, the authors show that a minimal system is diam-mean equicontinuous
if and only if its factor map to the MEF is injective on a set of full measure (almost surely 1:1)s

All of those results are in the regime of one-to-one extensions.
However, there are many systems whose factor map to the MEF is finite-to-one (in various meanings of the word).
The perhaps most prominent example of this kind is the Thue-Morse subshift (e.\,g.~\cite{Kakutani1972StrictlyES}).
Its fibers to the MEF contain at most 4 elements and almost surely only 2.
Indeed, by \cite[Section III, Theorem 4]{Dekking1978},
any constant length substitution yields a subshift that is an almost surely finite-to-one extension of its MEF.
Similar structures can be observed in the class of model sets (e.\,g.~\cite{irregularmodelsets}).

We here propose multivariate analogues of the notion of frequent stability and diam-mean equicontinuity.
We follow our work in \cite{breitenbücher2024multivariatemeanequicontinuityfinitetoone},
in which a multivariate notion of mean equicontinuity is introduced and it is proven that this property is satisfied by systems whose factor map to the MEF is measure-theoretically finite-to-one.
A key concept is the family of multivariate analogues of metrics given by
\[ \Dm: X^m \longrightarrow \R^+_0, \; \Dm(x_1,\ldots,x_m) := \min_{1\leqslant i < j \leqslant m} d(x_i,x_j) \,,\]
where $m \in \N$.
Using this notion, one can define a multivariate diameter
\[ \diam_m: \Pot X \to \R^+_0, \; \diam_m(A) := \sup \ml  \Dm(a_1,\ldots,a_m)  \mm (a_1,\ldots,a_m) \in A^m \mr \,. \]
We call $(X,\varphi)$ \emph{frequently $m$-stable} if
for any $\varepsilon > 0$ there is $\delta > 0$ such that
\[ \ml n \in \N \mm \diam_{m}\kl \varphi^n(B_\delta(x)) \kr < \varepsilon \mr \]
has positive density for any $x \in X$.
Furthermore, we call $(X,\varphi)$ \emph{diam-mean $m$-equicontinuous} if
for any $\varepsilon > 0$ there is $\delta > 0$ such that
\[ \limsup_{N \to \infty} \frac{1}{N} \sum_{n=0}^{N-1} \diam_m\kl \varphi^n(B_\delta(x)) \kr < \varepsilon \]
for all $x \in X$.
Now let $\pi: (X,\varphi) \to (Y,\psi)$ be the factor map to the MEF.
Define \enquote{almost $m$:1} and \enquote{almost surely $m$:1} as the existence a residual, respectively a full measure set of points which have at most $m$ preimages.
Then our main results can be stated as follows.

\begin{theoremA}[\refthm{mainthmfreqstable}]
	Let $(X,\varphi)$ be minimal.
	Then $(X,\varphi)$ is frequently $(m+1)$-stable if and only if $\pi$ is almost $m$:1.
\end{theoremA}

\begin{theoremB}[\refthm{mainthmdiamequi}]
	Let $(X,\varphi)$ be minimal.
	Then $(X,\varphi)$ is diam-mean $(m+1)$-equicontinuous if and only if $\pi$ is almost surely $m$:1.
\end{theoremB}

All this will be done in the context of $\sigma$-compact, locally-compact, abelian group actions on compact metric spaces (instead of only considering $\Z$-actions).

The structure of this paper is as follows:
Section \ref{sec:prelim} covers all general preliminary results used in the proofs and derives properties of the multivariate diameter.
The relevant preliminary results regarding the MEF are presented in Section \ref{sec:MEF}.
The finite-to-one invertibility properties, \emph{almost finite-to-one} and \emph{almost surely finite-to-one},
are defined and explored in Section \ref{sec:invertibility}.
Finally, the main results, are discussed and proved in Sections \ref{sec:frequentlystable} and \ref{sec:diammean}.

{\textsc{Acknowledgements.}}
I would like to express my gratitude to \textsc{Xiangdong Ye}, who helped me in locating sources.
He proved, together with \textsc{Wen Huang} and \textsc{Ping Lu}, a result similar to \refthm{multivariateregionallyproximal} for $\Z$ actions using different means in \cite{yemultivariateregionallyproximal}.
Furthermore, he pointed me to the article \cite{auslandermultivariateregionallyproximal} in a private communication.

Further, I want to thank my advisor \textsc{Tobias Jäger} for his helpful remarks and the suggestion of the topic
and my colleagues \textsc{Jamal Drewlo} and \textsc{Zeyu Kang} for fruitful discussions.

\section{Preliminaries}
\label{sec:prelim}
\emph{
	Throughout this paper, we assume that
	$G$ is a $\sigma$-compact, locally compact, abelian group.
	Furthermore, $X$ is a compact metric space and $\alpha: G \times X \to X$
	a continuous group action.
}

Given a set $A$, its power set is denoted by $\Pot A$ and its cardinality is written as $\card(A)$. 
If $B$ is another set, then $B^A$ is the set of functions from $A$ to $B$.
The set of continuous functions between two metric spaces $A$ and $B$ is denoted by $\Cc(A,B)$.
Furthermore, $\delta_z$ denotes the Dirac measure in $z$.
Finally we set $\N := \ml 1, 2, 3, \ldots  \mr$.

Let $(X,\alpha,G)$ be a tds.
The \textdef{orbit} of $x \in X$, given by $\ml \alpha(g,x) \mm g \in G \mr$, is denoted by $\alpha(G,x)$.
If all orbits are dense, we call $(X,\alpha,G)$ \textdef{minimal}.
Let $(Y,\beta,G)$ be another tds.
A map $\pi: X \to Y$ is called a \textdef{factor map}
if $\pi$ is surjective and $\pi(\alpha(g,x)) = \beta(g,\pi(x))$ for any $g \in G$ and $x \in X$.
In this case, we write $\pi: (X,\alpha,G) \to (Y,\beta,G)$.
If $\pi$ is a homeomorphism, it is called a \textdef{conjugacy}.

For the convenience of the reader, some classical and general preliminary results
will be listed before specific preliminaries are discussed in detail.
The results stated here are already adapted to our standing assumptions for the sake of simplicity.

Recall the following definition
\begin{defn}
	A family $\Fc \subseteq \Cc(X,Y)$ is called \textdef{equicontinuous}
	if for any $x \in X$ and $\varepsilon > 0$ there is $\delta > 0$ such that
	for any $f \in \Fc$ we have 
	$f(B_\delta(x)) \subseteq B_\varepsilon(f(x))$.
\end{defn}

\begin{proventhm}[Arzelà-Ascoli, {\cite[Theorem 7.17, p.~233 et seq.]{kelley}}]
	\label{thm:ascoli}
	Let $\Cc(X,Y)$ be equipped with the topology of uniform convergence.
	Then $\Fc \subseteq \Cc(X,Y)$ is compact if and only if $\Fc$ is closed and equicontinuous.
\end{proventhm}
\begin{proventhm}[Dini, {\cite[p.~239]{kelley}}]
	\label{thm:dini}
	Let $(f_n)_{n\in\N} \in \Cc(X,\R)^\N$. 
	If $(f_n)_{n\in\N}$ converges monotonically and pointwise
	to a continuous function $f \in \Cc(X,\R)$,
	then the convergence is uniform.
\end{proventhm}
\begin{proventhm}[Baire Category, {\cite[p.~200]{kelley}}]
	\label{thm:baire}
	If $A \subseteq X$ is residual, i.\,e.~contains a countable intersection of open and dense sets, then $A$ is dense.
\end{proventhm}
The following result, although trivial in our setting, tends to make arguments more lucid. 
\begin{provenlem}[Urysohn, {\cite[p.~115]{kelley}}]
	\label{lem:urysohn}
	For any pair of closed and disjoint subsets $A,B \subseteq X$
	there is a continuous function $f:X \to [0,1]$
	such that $f(A) = \ml 0 \mr$ and $f(B) = \ml 1 \mr$.
\end{provenlem}

\subsection{Averaging and Banach density}
The existence and uniqueness of invariant measures on locally compact groups is established by a celebrated theorem due to \textsc{Haar}.
In the version proven by \textsc{Weil} it states that
\begin{proventhm}[{\cite{weilsproofofhaar}}]
	\label{thm:haar}
	On any locally compact group $\Gamma$,
	there is a (unique up to scaling) locally-finite Borel measure $m_\Gamma$,
	such that $m_\Gamma(\gamma B) = m_\Gamma(B)$ for any Borel set $B\subseteq \Gamma$ and any $\gamma \in \Gamma$. 
\end{proventhm}
For the rest of this work, we pick a Haar measure $m$ on $G$.

For non-compact groups, these measures are infinite.
As with $\Z$ and $\R$, we will thus take averages using subsets of finite size.
This is done using
\begin{defn}[F\o lner Sequence]
	\label{def:folnersequence}
	A sequence $\Fc=(F_n)_{n\in\N}$ is called \textdef{F\o lner}
	if
	$F_n \subseteq G$ is compact and
	for any compact set $K \subseteq G$ and $\varepsilon > 0$ there is $N \in \N$ such that
	$\infty > m(F_n) > 0$ and
	\begin{align}
		\label{eq:folnernet}
		\frac{m\kl F_n \triangle K F_n \kr}{m\kl F_n \kr} < \varepsilon \,.
	\end{align}
	for all $n > N$.
\end{defn}

In our setting, the existence of such sequences is guaranteed by the following result.
\begin{proventhm}[e.\,g.~\cite{Pier1984AmenableLC}]
	Any abelian, locally compact and $\sigma$-compact group admits at least one F\o lner sequence. \qedhere
\end{proventhm}

We can now quantify the \enquote{average size} of certain subsets.
Let $\Fc$ be a F\o lner sequence and $H \subseteq G$.
\begin{defn}[$\Fc$-density]
	The \textdef{upper $\Fc$-density} of $H$ is defined as
	\[ \Du_\Fc(H) := \limsup_{n \to\infty} \frac{m( H \cap F_n)}{m(F_n)} \,. \]
	Similarly, the \textdef{lower $\Fc$-density} of $H$ is defined as
	\[ \Dl_\Fc(H) := \liminf_{n\to\infty} \frac{m( H \cap F_n)}{m(F_n)} \,.\]
\end{defn}

In some constructions, the set whose density we we aim to capture is shifted at each step.
We therefore define
\begin{defn}[$\Fc$-Banach Density]
	Define the \textdef{upper $\Fc$-Banach density} of $H$ as
	\[ \BDu_\Fc(H) := \limsup_{n\to\infty} \sup_{g\in G} \frac{m( H \cap gF_n)}{m(gF_n)}\]
	for any $H \subseteq G$.
	Similarly, the \textdef{lower $\Fc$-Banach density} of $H$ is defined as
	\[ \BDl_\Fc(H) := \liminf_{n\to\infty} \inf_{g\in G} \frac{m( H \cap gF_n)}{m(gF_n)} \,.\]
\end{defn}
A straightforward argument shows that
\begin{provenlem}
	It holds for any $H \subseteq G$ that
	\[ \BDl_\Fc(H) \leqslant \Dl_\Fc(H) \leqslant \Du_\Fc(H) \leqslant \BDu_\Fc(H) \]
	as well as
	\[ \Du_\Fc(H) = 1 - \Dl_\Fc(H^c) \quad\text{and}\quad \BDu_\Fc(H) = 1 - \BDl_\Fc(H^c) \,. \qedhere \]
\end{provenlem}

\subsection{Ergodicity}
Let $\mu$ be a measure on $X$ and $f:X \to Z$.
By $f_*\mu$ we denote the \textdef{pushforward measure} with $f_*\mu(B) = \mu(f\inv(B))$.
A probability measure $\mu$ is called \textdef{($\alpha$-)invariant}
if $\alpha(g,\cdot)_*\mu = \mu$ for any $g \in G$.
Note that every invariant measure is assumed to be a probability measure, by definition,
in order to simplify notation.
A subset $A \subseteq X$ is called \textdef{($\alpha$-)invariant}
if $\alpha(g\inv,A) \subseteq A$ for any $g \in G$.
An invariant measure $\mu$ is called \textdef{ergodic} if $\mu(A) \in \ml 0, 1 \mr$ for every invariant set $A$.
A tds is called uniquely ergodic, if there is exactly one invariant measure.
In this case, the unique invariant measure is always ergodic.

The Riesz Representation Theorem \cite[Theorem 7.44]{functional}
and the Banach-Alaoglu Theorem \cite[8.10 and 8.11]{functional}
jointly imply
\begin{proventhm}[{\cite[Proposition 8.27]{functional}}]
	\label{thm:alaoglu}
	The space of probability measures on any compact metric space equipped with the weak-$*$-topology is a compact metric space itself.
\end{proventhm}

This prvides the basis for the Krylov-Bogolyubov Procedure, which allows to construct invariant measures by averaging:
\begin{proventhm}[{\cite[Theorem 8.10]{Einsiedler2011}}]
	\label{thm:krylovbogolyubov}
	For a sequence $(\mu_n)_{n\in \N}$ of probability measures and any F\o lner sequence $\Fc = (F_n)_{n\in\N}$
	define $\nu_n$ by the formula
	\[ \int f \dd\nu_n := \frac{1}{m(F_n)}\int_{F_n} \int f \circ \alpha(g,\cdot) \dd\mu_n \dd m(g)\,, \]
	where $f \in \Cc(X,\R)$.
	Then any weak-$*$-cluster point $\nu$ of $(\nu_n)_{n\in \N}$ is $\alpha$-invariant.
\end{proventhm}

One of the consequences is the Uniform Ergodic Theorem.
\begin{proventhm}[{\cite[Theorem 4.10]{Einsiedler2011}}]
	\label{thm:uniformergodicthm}
	If $(Y,\beta,G)$ is uniquely ergodic with invariant measure $\nu$,
	then
	\[ \lim_{n\to\infty} \frac{1}{m(F_n)} \int_{F_n} f \circ \beta(g,x) \dd m(g) = \int f \dd\nu \,. \]
	uniformly in $x \in X$ for any F\o lner sequence $\Fc=(F_n)_{n\in\N}$ and any $f\in \Cc(X,\R)$.
\end{proventhm}

\begin{cor}
	\label{cor:uniformergodicandbanach}
	Let $(Y,\beta,G)$ be uniquely ergodic with invariant measure $\nu$.
	If $f \geqslant \1_A$ is continuous,
	then
	\[
		\BDl_\Fc\kl \ml g \in G \mm f(\beta(g,y)) > 0 \mr \kr \geqslant \nu(A)\,.
	\] 
	for any F\o lner sequence $\Fc$ and $y \in Y$.
	Analogously, if $f \leqslant \1_A$ is continuous, then
	\[
		\BDu_\Fc\kl \ml g \in G \mm f(\beta(g,y)) > 0 \mr \kr \leqslant \nu(A)\,.
	\] 
	for any F\o lner sequence $\Fc$ and $y \in Y$.
\end{cor}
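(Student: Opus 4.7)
The plan is to deduce both inequalities from the Uniform Ergodic Theorem (Theorem \ref{thm:uniformergodicthm}) via pointwise comparisons between $\1_H$, where $H := \ml g \in G \mm f(\beta(g,y)) > 0 \mr$, and suitable continuous functions on $Y$. The first step is to observe that the uniformity of Theorem \ref{thm:uniformergodicthm} in the base point $x \in X$, combined with the translation-invariance of the Haar measure, strengthens to uniform convergence over all translates $g_0 F_n$: substituting $x = \beta(g_0, y)$ and changing variables $h = g g_0$ yields, for every $\phi \in \Cc(Y,\R)$ and $y \in Y$,
\[
\lim_{n\to\infty} \sup_{g_0 \in G} \left| \frac{1}{m(g_0 F_n)} \int_{g_0 F_n} \phi(\beta(h,y)) \dd m(h) - \int \phi \dd\nu \right| = 0.
\]

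For the first assertion I would first replace $f$ with $\min(\max(f,0), 1)$, which still dominates $\1_A$ and leaves $H$ unchanged, so we may assume $f \in [0,1]$. The pointwise bound $f(\beta(h,y)) \leqslant \1_H(h)$ then integrates to give $\int f \dd\nu \leqslant \BDl_\Fc(H)$ via the uniform convergence above, and the inequality $\int f \dd\nu \geqslant \nu(A)$ completes the argument.

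For the second assertion the dual pointwise bound $\1_H(h) \leqslant \phi(\beta(h,y))$ is required, which calls for a continuous $\phi: Y \to [0,1]$ dominating $\1_V$ for $V := \ml f > 0 \mr \subseteq A$. I would take $\phi = \phi_\varepsilon$ with $\phi_\varepsilon(x) := \max\bigl(0,\, 1 - \operatorname{dist}(x,A)/\varepsilon\bigr)$, which is continuous, lies in $[0,1]$, equals $1$ on $\overline A$, and vanishes outside the closed $\varepsilon$-thickening $A^\varepsilon := \ml x \in Y \mm \operatorname{dist}(x,A) \leqslant \varepsilon \mr$. Then $\phi_\varepsilon \geqslant \1_{\overline A} \geqslant \1_V$, the uniform convergence yields $\BDu_\Fc(H) \leqslant \int \phi_\varepsilon \dd\nu \leqslant \nu(A^\varepsilon)$, and letting $\varepsilon \searrow 0$ sends $\nu(A^\varepsilon)$ to $\nu(\overline A)$ by continuity of measure along the decreasing intersection $\bigcap_\varepsilon A^\varepsilon = \overline A$.

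The main obstacle sits in this last step: because $\1_V$ is only lower semicontinuous, the natural continuous upper bound registers the closure $\overline A$ rather than $A$ itself, and the passage to the claimed bound $\nu(A)$ rests on $\nu(\overline A) = \nu(A)$, which holds automatically when $A$ is closed (the case of interest in all intended applications of the corollary) but not for arbitrary Borel $A$.
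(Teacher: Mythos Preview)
Your argument for the first inequality is exactly the paper's: use the Uniform Ergodic Theorem, pass to translates via the substitution $x = \beta(h,y)$, and bound the average of $f$ from above by the density of $H$. The paper's write-up is terser and leaves the normalisation $f \in [0,1]$ implicit, but the content is the same.

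Your caveat about the second inequality is not only well-taken but sharp: the statement as printed is \emph{false} for general Borel $A$. On the circle with an irrational rotation one can cover arbitrarily long blocks of an orbit by tiny open balls, producing an open set $A$ of arbitrarily small measure and a continuous $f \leqslant \1_A$ (e.g.\ $f = \mathrm{dist}(\,\cdot\,,A^c)$) for which the return set $H$ has upper Banach density $1$. Your proof via the functions $\phi_\varepsilon$ yields the correct bound $\BDu_\Fc(H) \leqslant \nu(\overline A)$, and hence the stated inequality whenever $A$ is closed. The paper does not prove the second part separately, and in its only applications (the implications \refi{almostm1}$\Rightarrow$\refi{strongfrequentmstability} and \refi{almostsurelym1}$\Rightarrow$\refi{strongdiammean}) one can in fact work entirely with the first inequality together with the complementarity $\BDu_\Fc(H^c) = 1 - \BDl_\Fc(H)$, so the defect in the statement is harmless for the paper's purposes.
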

\begin{proof}
	Let $\varepsilon > 0$.
	By \refthm{uniformergodicthm}, the ergodic averages
	of $f$ converge uniformly
	to $\int f \dd\nu \geqslant \nu(A)$.
	In particular, there is $N \in \N$ such that 
	\begin{align*}
		\frac{1}{m(F_n)} \int_{F_n} f \circ \beta(g,y) \dd m(g)  \geqslant \nu(A) - \varepsilon
	\end{align*}
	for all $n > N$ and any $y \in Y$.
	So for any $n > N$, $y\in Y$ and $h \in G$
	it holds that
	\[ \frac{1}{m(hF_n)} \int_{hF_n} f \circ \beta(g,y) \dd m(g) = \frac{1}{m(F_n)} \int_{F_n} f \circ \beta(g,\beta(h,y)) \dd m(g)  \geqslant \nu(A) - \varepsilon \,. \qedhere\]
\end{proof}

In the non-uniquely ergodic case, there are non-ergodic measures.
However
\begin{proventhm}[Ergodic Decomposition, {\cite[p.~266, Theorem 8.20]{Einsiedler2011}}]
	\label{thm:ergodicdecomp}
	Let $Z$ be a compact metric space and let $\gamma$ be an continuous action of $G$ on $Z$, so that $(Z,\gamma,G)$ is a tds. 
	For any invariant measure $M$ on $Z$, there is a measure $E$ on the set of ergodic measures of $(Z,\gamma,G)$ such that
	\[ \int_Z f \dd M = \int \int_Z f \dd M_e \dd E(M_e) \]
	for any $f \in L^1(\mu)$.
	In short we will write $M = \int M_e \dd E(M_e)$ in that situation.
\end{proventhm}

A F\o lner sequence	$(F_n)_{n\in\N}$ is \textdef{tempered} if and only if
there is $C>0$ such that
\[ \frac{m\kl \bigcup_{k=1}^{n-1} F_k \cdot F_n \kr}{m\kl F_n \kr} < C \]
for all $n \in \N$.
This conditions makes sure that the elements of the F\o lner sequence \enquote{do not move around in $G$ too much}.

\begin{provenlem}[{\cite[Proposition 1.4]{lindenstrauss}}]
	Every F\o lner sequence has a tempered subsequence.
\end{provenlem}

The celebrated Lindenstrauss Ergodic Theorem generalises the one by Birkhoff and states that
\begin{proventhm}[{\cite[Theorem 1.2]{lindenstrauss}}]
	\label{thm:lindenstrauss}
	If $\Fc$ is tempered and $\mu$ is an ergodic measure for $(X,\alpha,G)$, then
	for any $f \in L^1(\mu)$ we have
	\[ \frac{1}{m(F_n)} \int_{F_n} f(\alpha(g,x)) \dd m(g) \konv n \int f \dd\mu \]
	for $\mu$-almost all $x \in X$.
\end{proventhm}

\subsection{Multivariate Diameter}
We extend frequent stability and diam-mean equicontinuity to multivariate settings by introducing a replacement for the diameter.
This generalisation allows us to capture the interactions between multiple points (instead of just two).
This in turn will require the multivariate analogues of the metric introduced in \cite{breitenbücher2024multivariatemeanequicontinuityfinitetoone} to be defined.
Let $m \in \N \setminus \ml 1 \mr$.
We define
\[ \Dm: X^m \longrightarrow  \R_0^+, \; (x_1,\ldots, x_m) := \min_{i \neq j} d(x_i,x_j) \,, \]
measuring the minimal pairwise distance.

By construction, $\Dm$ is symmetric, \ie invariant under any permutation of the entries, and $D_m(x,\ldots,x) = 0$ for any $x \in X$.
Furthermore, it satisfies a generalisation of the triangle inequality as demonstrated in the following lemma.
\begin{provenlem}[{\cite[Lemma 4.2]{breitenbücher2024multivariatemeanequicontinuityfinitetoone}}]
	\label{lem:polygoninequality}
	For any $\mathbf{x}\in X^{m}$ and $z \in X$ we have
	\[ \Dm(\mathbf{x}) \leqslant \sum_{i=1}^m \Dm\Rep{\mathbf{x}}{i}{z} \,, \]
	where $\Rep{\mathbf{x}}{i}{z}$ is the tuple $\mathbf{x}$
	with the $i$-th entry replaced by $z$, i.e.
	\[ \Rep{\mathbf{x}}{i}{z}_j := \begin{cases} z & j = i \\ x_j & j \neq i \end{cases} \,. \qedhere\]
\end{provenlem}
We will call the above inequality the \textdef{polygon inequality}\footnote
{
	The name \enquote{polygon inequality} stems from the fact that we build all possible polygons where the new point $z$ replaces one of the old points $x_i$.
	This might be illustrated by the following (unrelated) more geometric example:
	On $\R^n$ a  multivariate distance for a $n$-tuple of points may be defined as the measure of the convex hull of the points in the tuple.
	The polygon inequality of this multivariate distance results from the fact that the polygons created by letting the new point replace one of the old ones form a disjoint cover of the polygon consisting of the old points.
}.

Furthermore, $D_m$ satisfies a strong continuity requirement.
\begin{lem}
	\label{lem:estimatefordm}
	Let $(x_1,\ldots,x_m), (\tilde x_1,\ldots,\tilde x_m) \in X^{m}$
	If $d(x_i,\tilde x_i) < \varepsilon$ for any $i \in \ml 1,\ldots,m\mr$, then
	\[ \bl D_m(x_1,\ldots,x_m)- D_m(\tilde x_1,\ldots,\tilde x_m) \br <  2\varepsilon \,. \]
\end{lem}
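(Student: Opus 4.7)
The plan is to bound $D_m$ on one tuple in terms of $D_m$ on the other by using the ordinary triangle inequality on each pair of coordinates, and then to pass to the minimum.

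First I would fix indices $i \neq j$ and observe, by two applications of the triangle inequality, that
\[ d(\tilde x_i, \tilde x_j) \;\leqslant\; d(\tilde x_i, x_i) + d(x_i, x_j) + d(x_j, \tilde x_j) \;<\; d(x_i, x_j) + 2\varepsilon \,. \]
Let $(i_0, j_0)$ be a pair realizing $D_m(x_1,\ldots,x_m) = d(x_{i_0}, x_{j_0})$. Then
\[ D_m(\tilde x_1,\ldots,\tilde x_m) \;\leqslant\; d(\tilde x_{i_0}, \tilde x_{j_0}) \;<\; d(x_{i_0}, x_{j_0}) + 2\varepsilon \;=\; D_m(x_1,\ldots,x_m) + 2\varepsilon \,. \]

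The symmetric argument (with the roles of $x_i$ and $\tilde x_i$ swapped, using that $d(x_i, \tilde x_i) = d(\tilde x_i, x_i) < \varepsilon$) yields the reverse bound $D_m(x_1,\ldots,x_m) < D_m(\tilde x_1,\ldots,\tilde x_m) + 2\varepsilon$, and combining the two inequalities gives $|D_m(x_1,\ldots,x_m) - D_m(\tilde x_1,\ldots,\tilde x_m)| < 2\varepsilon$.

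There is no serious obstacle here: the only point to be a bit careful about is that one must not take the minimum prematurely on both sides (the minimising pair for the two tuples may differ), so one fixes the minimiser on one side and estimates the corresponding pair on the other side, then reverses the roles.
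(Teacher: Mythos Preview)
Your proof is correct and follows essentially the same approach as the paper: fix the minimising pair for one tuple, bound the corresponding pair in the other tuple via the triangle inequality, then invoke symmetry for the reverse inequality. Your explicit caution about not taking the minimum prematurely on both sides is exactly the right point, and your write-up is in fact cleaner than the paper's (whose displayed chain has the roles slightly garbled in the final step).
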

\begin{proof}
	Let $i \neq j$ be such that $D_m(x_1,\ldots,x_m) = d(x_i,x_j)$.
	Now 
	\begin{align*}
		D_m(x_1,\ldots,x_m)  d(x_i, x_j) &\leqslant d(x_i, \tilde x_i) + d(\tilde x_i,\tilde x_j) + d(\tilde x_j,x_j) \\
		&= d(\tilde x_i,\tilde x_j) + 2\varepsilon \\
		&\leqslant D_m(\tilde x_1, \ldots, \tilde x_m) + 2\varepsilon \,.
	\end{align*}
	The other inequality follows analogously.
\end{proof}

For a subset $A \subseteq X$, one usually defines
\[ \diam(A) := \sup \ml  d(a_1,a_2) \mm (a_1,a_2) \in A^2 \mr \,. \]
Using $\Dm$, a multivariate diameter is defined as
\begin{align*}
	\diam_m(A) := \sup\ml  \Dm(a_1,\ldots,a_m) \mm (a_1,\ldots,a_m) \in A^m \mr \,.
\end{align*}
Note that $\diam_2 = \diam$.
\begin{lem}
	For any subset $A \subseteq X$ it holds $\diam_m(A) \in [E_m(A),2\cdot E_m(A)]$, where 
	\[ E_m(A) := \inf\ml \varepsilon > 0 \mm \exists (x_1, \ldots,x_{m-1}) \in X^{m-1} : \bigcup_{i=1}^{m-1} B_\varepsilon(x_i) \supseteq A \mr \,. \]
\end{lem}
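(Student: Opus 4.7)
The plan is to split the claim into the two inequalities $\diam_m(A) \leqslant 2 E_m(A)$ and $E_m(A) \leqslant \diam_m(A)$, each handled by a short, direct argument.

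For the upper bound I would fix any $\varepsilon > E_m(A)$ together with centres $x_1,\ldots,x_{m-1} \in X$ such that $A \subseteq \bigcup_{i=1}^{m-1} B_\varepsilon(x_i)$. For any tuple $(a_1,\ldots,a_m) \in A^m$ the pigeonhole principle yields indices $j \neq k$ with $a_j,a_k \in B_\varepsilon(x_i)$ for some $i$, whence $d(a_j,a_k) < 2\varepsilon$ and consequently $\Dm(a_1,\ldots,a_m) < 2\varepsilon$. Taking the supremum over tuples and then the infimum over $\varepsilon > E_m(A)$ delivers $\diam_m(A) \leqslant 2 E_m(A)$.

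For the lower bound I would proceed via a maximal separated set. Fix $\delta > \diam_m(A)$ and let $S \subseteq A$ be a maximal subset whose pairwise distances are all at least $\delta$. If $|S| \geqslant m$, then selecting any $m$ of its elements would produce a tuple with $\Dm \geqslant \delta > \diam_m(A)$, contradicting the definition of $\diam_m(A)$. Hence $|S| \leqslant m-1$. By maximality, every point of $A$ lies within distance $\delta$ of some element of $S$, so $A \subseteq \bigcup_{s \in S} B_\delta(s)$; padding the family with arbitrary additional centres if $|S| < m-1$, this witnesses $E_m(A) \leqslant \delta$. Letting $\delta$ descend to $\diam_m(A)$ yields $E_m(A) \leqslant \diam_m(A)$.

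I do not anticipate any real obstacle here; the only minor subtleties are the degenerate case where $A$ has fewer than $m$ points (in which case both quantities vanish and the bound is trivial) and the handling of strict versus non-strict inequalities in the separation argument, which is naturally resolved by working with parameters that strictly exceed $E_m(A)$ and $\diam_m(A)$ respectively.
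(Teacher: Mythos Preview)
Your proof is correct and essentially mirrors the paper's argument: the upper bound via pigeonhole is identical, and for the lower bound the paper runs the contrapositive (assuming no $(m-1)$-ball cover of radius $\varepsilon$ exists, it greedily picks an $\varepsilon$-separated $m$-tuple in $A$), which is the same separated-set idea you employ, merely framed in the opposite direction.
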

\begin{proof}
	Let $\varepsilon > 0$ such that there is $(x_1,\ldots,x_{m-1}) \in X^{m-1}$ with $\bigcup_{i=1}^{m-1} B_\varepsilon(x_i) \supseteq A$.
	Let $(a_1,\ldots,a_m) \in A^m$ be arbitrary.
	By the pigeon-hole principle, there is $k \in \ml 1,\ldots,m-1 \mr$ and $i \neq j$ such that $a_i,a_j \in B_\varepsilon(x_k)$.
	Thus, 
	\[ \Dm(a_1,\ldots,a_m) \leqslant d(a_i,a_j) \leqslant 2\varepsilon \,. \]
	Therefore, $\diam_m(A) \leqslant 2\varepsilon$ and taking the infimum over all such $\varepsilon > 0$ yields
	\[ \diam_m(A) \leqslant2 E_m(A) \,.\]

	For the other inequality, let $\varepsilon > 0$ and assume that
	$A \setminus \bigcup_{i=1}^{m-1} B_\varepsilon(x_i) \neq \emptyset$ for any $(x_1,\ldots,x_{m-1}) \in X^{m-1}$.
	Take any $a_1 \in A$.
	By assumption, there is $a_2 \in A \setminus B_\varepsilon(a_1)$.
	Inductively pick $a_{j+1} \in A \setminus \bigcup_{i=1}^{j} B_\varepsilon(a_i)$ for $j\leqslant m-1$.
	This gives us a tuple $(a_1,\ldots,a_m) \in A^m$ with $\Dm(a_1,\ldots,a_m) > \varepsilon$. 
\end{proof}
The bounds given above are tight as the following example shows.
\begin{ex}
	Let $m \in \N \setminus \ml 1 \mr$ and $X = A = [1,m]$. 
	Observe that $\diam_m(A) = 1$.
	Furthermore, $\bigcup_{i=1}^{m-1} B_{\frac{1}{2}+\delta} (x_i) \supseteq A$ for any $\delta > 0$,
	where $x_i = i+\frac{1}{2}$.
	So in this case $E_m(A) = \frac{1}{2}$ and thus $\diam_m(A) = 2E_m(A)$.

	Conversely, consider $X' = A' = \N \cap [1,m]$.
	Again $\diam_m(A) =1$.
	However, for no $\varepsilon \leqslant 1$ there is $x_1,\ldots,x_{m-1} \in X'$ with $\bigcup_{i=1}^m B_\varepsilon(x_i) \supseteq A'$.
	As clearly $\bigcup_{i=1}^{m-1} B_{1+\delta}(i) \subseteq A'$ for any $\delta > 0$ we have $E_m(A') = 1$.
	So $\diam_m(A') = E_m(A')$. 
\end{ex}

\begin{lem}
	\label{lem:densenessanddiam}
	Let $\varepsilon > 0$ and $m \in \N$.
	Assume that $A,B \subseteq X$ with $B_\varepsilon(A) \supseteq B$.
	Then $\diam_{m}(A) \geqslant \diam_{m}(B) - 2\varepsilon$.
\end{lem}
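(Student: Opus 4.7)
The plan is to reduce the statement to \reflem{estimatefordm} by approximating every tuple in $B^m$ by a close-by tuple in $A^m$, coordinate by coordinate.

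More concretely, I would fix an arbitrary tuple $(b_1, \ldots, b_m) \in B^m$. Since $B \subseteq B_\varepsilon(A)$, for each $i \in \{1,\ldots,m\}$ there exists some $a_i \in A$ with $d(a_i, b_i) < \varepsilon$; using the axiom of choice I pick such an $a_i$ for every $i$. This yields a tuple $(a_1,\ldots,a_m) \in A^m$ satisfying the hypothesis of \reflem{estimatefordm} with constant $\varepsilon$, so
\[ \bl \Dm(b_1,\ldots,b_m) - \Dm(a_1,\ldots,a_m) \br < 2\varepsilon \,, \]
and in particular $\Dm(a_1,\ldots,a_m) > \Dm(b_1,\ldots,b_m) - 2\varepsilon$. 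Since $(a_1,\ldots,a_m) \in A^m$, this gives $\diam_m(A) \geqslant \Dm(a_1,\ldots,a_m) > \Dm(b_1,\ldots,b_m) - 2\varepsilon$.

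Finally, taking the supremum over all $(b_1,\ldots,b_m) \in B^m$ turns the strict inequality into the non-strict inequality $\diam_m(A) \geqslant \diam_m(B) - 2\varepsilon$, as desired.

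There is no real obstacle here: the polygon inequality is not needed, only the Lipschitz-type estimate \reflem{estimatefordm}. The only point that requires a brief moment of thought is that we use \emph{open} balls in $B_\varepsilon(A)$, which matches the strict inequality hypothesis of \reflem{estimatefordm}, and that taking the supremum absorbs the strictness into a $\geqslant$.
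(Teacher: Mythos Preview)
Your proof is correct and follows essentially the same route as the paper: pick a tuple in $B^m$, approximate each coordinate by a point in $A$ using $B \subseteq B_\varepsilon(A)$, invoke \reflem{estimatefordm}, and take the supremum. The only cosmetic difference is that you spell out the strict-versus-nonstrict passage more carefully; note also that the appeal to the axiom of choice is unnecessary since only finitely many choices are made.
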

\begin{proof}
	Pick $b_1,\ldots,b_m \in B$.
	Now there are $a_1,\ldots,a_m \in A$ such that $d(a_i,b_i) < \varepsilon$ for any $i \in \ml 1,\ldots, m\mr$.
	So by \reflem{estimatefordm} we have
	\[ D_m(a_1,\ldots,a_m) \geqslant D_m(b_1,\ldots,b_m) -2\varepsilon \,. \]
	Taking the supremum first over $(a_1,\ldots,a_m) \in A^m$ and then over $(b_1,\ldots,b_m) \in B^m$ we obtain the desired result.
\end{proof}

\subsection{Hausdorff Metric}
The notions of frequent stability and diam-mean equicontinuity consider iterates of $\delta$-balls.
Even though those notions are properties of the original system,
they do not correspond to ergodic averages in that system.
This is because the entire ball, not just its center, is iterated.
In order to still use the ergodic theorem,
it is useful to consider a system in which these averages are indeed ergodic averages.
We do this by lifting the action to the space of compact non-empty subsets and equip this with
the so-called Hausdorff metric, which turns this into a compact metric space.

Equip the set of all compact subsets of $X$, \ie
\[ \Kc(X) := \ml K \subseteq X \mm K \text{ compact }\,, \mr\]
with the \textdef{Hausdorff metric}
\begin{align*}
	d_\Hc(K,W):=& \inf\ml \varepsilon > 0 \mm B_\varepsilon(W) \subseteq K \text{ and } B_\varepsilon(K) \subseteq W \mr \\
	=& \max\ml \sup_{k \in K} \inf_{w \in W} d(k,w), \sup_{w \in W} \inf_{k \in K} d(w,k) \mr \\
	=& \max\ml \max_{k \in K} d(k,W), \max_{w \in W} d(w,K) \mr \,.
\end{align*}
\begin{provenlem}[{\cite[Theorem 4.2]{TopologiesOnSpacesOfSubsets}}]
	\label{lem:convergenceinhausdorffmetric}
	$(\Kc(X),d_\Hc)$ is a compact metric space.
	Furthermore, if $\lim_{n\to\infty} E_n = E$, then
	\[ E = \bigcap_{N=1}^\infty \overline{\bigcup_{n=N}^\infty E_n }\,. \qedhere \]
\end{provenlem}
\begin{provenprop}[{\cite[Remark 4.6.~(3)]{hyperspacecontinuity}}]
	If $(X,\alpha,G)$ is a tds,
	then the action of $\alpha$ on $X$ induces an action $\hat\alpha$ on $(\Kc(X),d_\Hc)$ by
	\[ \hat\alpha(g,K) = \ml \alpha(g,k) \mm k \in K \mr \,. \qedhere \]
\end{provenprop}
Under this action, the ergodic theorems can be applied to the closure of $\delta$-balls.

\reflem{densenessanddiam} implies
\begin{provenlem}
	\label{lem:continuityofdiam}
	$\diam_m: (\Kc,d_\Hc) \to \R^+_0$ is continuous for any $m \in \N$.
\end{provenlem}

In order to understand the topology on $\Kc(X)$, the following is useful.
\begin{lem}
	\label{lem:topologyonkcx}
	If $W \subseteq X$ is compact and $\delta> 0$, then
	\[ C(W,\delta) := \ml K \subseteq X \mm \exists x \in W: \overline{B_\delta(x)} \subseteq K \mr \]
	is a compact subset of $\Kc(X)$.
\end{lem}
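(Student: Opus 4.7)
Since $\Kc(X)$ is itself compact by \reflem{convergenceinhausdorffmetric}, it suffices to show that $C(W,\delta)$ is closed in $(\Kc(X),d_\Hc)$. The plan is to take a convergent sequence $K_n \to K$ with $K_n \in C(W,\delta)$, extract the associated centres $x_n \in W$ (with $\overline{B_\delta(x_n)} \subseteq K_n$), and produce a limit centre $x \in W$ witnessing $K \in C(W,\delta)$.

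First, since $W$ is compact by hypothesis, I can pass to a subsequence (still called $(x_n)$) such that $x_n \to x$ with $x \in W$. My goal is then to verify $\overline{B_\delta(x)} \subseteq K$. Because $K$ is closed, it is enough to show the open inclusion $B_\delta(x) \subseteq K$. So fix $y \in B_\delta(x)$, set $\eta := \delta - d(y,x) > 0$, and choose $N$ with $d(x_n,x) < \eta$ for all $n \geqslant N$. Then for every $n \geqslant N$,
\[
 d(y,x_n) \leqslant d(y,x) + d(x,x_n) < d(y,x) + \eta = \delta,
\]
so $y \in B_\delta(x_n) \subseteq \overline{B_\delta(x_n)} \subseteq K_n$.

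Now I invoke the second part of \reflem{convergenceinhausdorffmetric}: since $K_n \to K$ in $d_\Hc$, we have $K = \bigcap_{M=1}^\infty \overline{\bigcup_{n \geqslant M} K_n}$. For every $M \in \N$, taking any $n \geqslant \max(N,M)$ gives $y \in K_n \subseteq \bigcup_{n \geqslant M} K_n$, hence $y$ lies in the closure. Intersecting over $M$ yields $y \in K$, which is precisely what was required. This shows $B_\delta(x) \subseteq K$, and after taking closures $\overline{B_\delta(x)} \subseteq K$, so $K \in C(W,\delta)$.

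I do not expect any serious obstacle: the only subtlety is that the inclusion $\overline{B_\delta(x_n)} \subseteq K_n$ involves the \emph{closed} ball, so one cannot simply intersect ``closed balls converge to closed balls'' with the Hausdorff limit of the $K_n$. Restricting to points $y$ in the open ball $B_\delta(x)$ (and then closing up at the end using that $K$ is closed) sidesteps this issue cleanly.
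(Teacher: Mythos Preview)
Your proof is correct and follows essentially the same approach as the paper: both extract a convergent subsequence of centres $x_n \to x \in W$, use the representation $K = \bigcap_{M} \overline{\bigcup_{n\geqslant M} K_n}$ from \reflem{convergenceinhausdorffmetric}, and verify $B_\delta(x) \subseteq K$ by observing that each $y \in B_\delta(x)$ eventually lies in $K_n$. The only cosmetic difference is that you phrase it as showing closedness inside the compact space $\Kc(X)$, whereas the paper phrases it as sequential compactness directly; the underlying argument is identical.
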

\begin{proof}
	Let $(E_n)_{n\in\N} \in C(W,\delta)^\N$ be a sequence.
	Let $(x_n)_{n\in\N} \in W^\N$ be such that $\overline{B_\delta(x)} \subseteq E_n$.
	As $W$ is compact, we can assume that $x_n$ is convergent (else we take a converging subsequence) to $x \in W$.
	The whole space $\Kc(X)$ is compact, so we can furthermore assume that $E_n$ is converging to some $E \in \Kc(X)$.
	We must show that $E \in C(W,\delta)$.
	\reflem{convergenceinhausdorffmetric} implies that
	\[ E = \bigcap_{N=1}^\infty \overline{\bigcup_{n=N}^\infty E_n }\,. \]
	Now, $B_{\delta-\eta}(x) \subseteq \bigcup_{n=N}^\infty E_n$ for any $\eta > 0$ and $N \in \N$, as $\lim_{n\to\infty} x_n = x$.
	This implies that $B_\delta(x) \subseteq \bigcup_{n=N}^\infty E_n$ and thus 
	\[ \overline{B_\delta(x)} \subseteq \bigcap_{N=1}^\infty \overline{\bigcup_{n=N}^\infty E_n } = E\,. \qedhere\]
\end{proof}

In order to study the invariant measures of $\hat\alpha$, we restate \cite[Lemma 3.2]{frequentlystableisalmostautomorphic}.
\begin{lem}
	\label{lem:stabilityofcovering}
	Let $(E_n)_{n\in\N} \in \Kc(X)$ converge to $E \in \Kc(X)$.
	If $F \subseteq G$ is finite with
	$X = \bigcup_{f \in F} \alpha(f,E_n)$ for any $n \in \N$,
	then $X = \bigcup_{f \in F} \alpha(f,E)$.
\end{lem}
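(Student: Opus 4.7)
The plan is to use the finiteness of $F$ together with the pigeonhole principle to reduce to a situation where the Hausdorff-limit characterisation from \reflem{convergenceinhausdorffmetric} applies directly.

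First I would fix an arbitrary $x \in X$ and use the hypothesis that for each $n \in \N$ there exists $f_n \in F$ with $x \in \alpha(f_n, E_n)$. Since $G$ is a group, this is equivalent to $\alpha(f_n\inv, x) \in E_n$. Because $F$ is \emph{finite}, the pigeonhole principle produces some fixed $f \in F$ and a subsequence $(n_k)_{k\in\N}$ with $f_{n_k} = f$ for all $k$. Setting $y := \alpha(f\inv, x)$, we then have $y \in E_{n_k}$ for every $k$.

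Next I would invoke the explicit description of Hausdorff limits from \reflem{convergenceinhausdorffmetric}, namely
\[ E \;=\; \bigcap_{N=1}^\infty \overline{\bigcup_{n=N}^\infty E_n}\,. \]
For every $N \in \N$, infinitely many indices $n_k$ satisfy $n_k \geqslant N$, hence $y \in E_{n_k} \subseteq \bigcup_{n\geqslant N} E_n$, and in particular $y$ lies in the closure of this union. As this holds for all $N$, we conclude $y \in E$, so that $x = \alpha(f,y) \in \alpha(f,E) \subseteq \bigcup_{f\in F} \alpha(f,E)$. Since $x$ was arbitrary, the desired equality follows.

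There is no serious obstacle here; the only ingredients needed are the finiteness of $F$ (to ensure that a single translate works along a subsequence) and the characterisation of Hausdorff limits as the set of accumulation points. The possible pitfall would be trying to argue directly from $d_\Hc(E_n, E) \to 0$ without extracting a subsequence, which does not work because the element $f_n$ witnessing the covering property may a priori vary with $n$.
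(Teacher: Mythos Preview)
Your argument is correct, but it follows a different route from the paper's. The paper observes that for each $g\in G$ the map $\alpha(g,\cdot)$ is a homeomorphism of $X$, hence induces a homeomorphism of $(\Kc(X),d_\Hc)$, and that a finite union of such maps, $K \mapsto \bigcup_{f\in F}\alpha(f,K)$, is continuous on $\Kc(X)$. Continuity then gives $X = \alpha(F,E_n) \to \alpha(F,E)$ immediately. Your approach instead works pointwise: you fix $x\in X$, use finiteness of $F$ and pigeonhole to stabilise the translate along a subsequence, and then invoke the explicit description of the Hausdorff limit from \reflem{convergenceinhausdorffmetric}. The paper's argument is slicker and yields the reusable fact that $\alpha(F,\cdot)$ is continuous on the hyperspace; your argument is more elementary in that it avoids checking this continuity and relies only on the already-stated limit formula. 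Both are perfectly valid.
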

Note that \textsc{Xu} and \textsc{Hu} work with semigroups.
For the convenience of the reader we provide a simplified proof for the invertible case.
\begin{proof}
	The action $\alpha(g,\cdot)$ is a homeomorphism for any $g \in G$, so the map
	\[ \alpha(F, \cdot): (\Kc(X),d_\Hc) \longrightarrow (\Kc(X),d_\Hc), \; K \longmapsto  \bigcup_{f \in F} \alpha(f,K ) \]
	is continuous.
	Thus, $X = \alpha(F,E_n) \xlongrightarrow{n \to \infty} \alpha(F,E)$.
\end{proof}

Let us also cite \cite[Lemma 3.3]{frequentlystableisalmostautomorphic}.
Again we provide a proof streamlined for the case of invertible transformations.
\begin{lem}
	\label{lem:coveringimpliesnonemptyinterior}
	Let $E \in \Kc(X)$.
	If there is a countable set $H$ such that $X = \bigcup_{g \in H} \alpha(g\inv,E)$,
	then $E$ has non-empty interior.
\end{lem}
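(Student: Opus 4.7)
The plan is to run a standard Baire category argument on $X$, using that each set $\alpha(g^{-1},E)$ is closed and that the action is by homeomorphisms.

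First I would note that $X$ is a compact metric space, hence a Baire space, so by \refthm{baire} it cannot be written as a countable union of closed sets with empty interior. Since $E$ is compact and $\alpha(g^{-1},\cdot)$ is continuous, each translate $\alpha(g^{-1},E)$ is compact, in particular closed in $X$. From the hypothesis $X = \bigcup_{g \in H} \alpha(g^{-1},E)$ with $H$ countable, Baire then produces some $g_0 \in H$ such that $\alpha(g_0^{-1},E)$ has non-empty interior.

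Finally I would transport the interior back to $E$: since $\alpha(g_0,\cdot): X \to X$ is a homeomorphism and maps $\alpha(g_0^{-1},E)$ onto $E$, it sends the interior of $\alpha(g_0^{-1},E)$ into the interior of $E$. Hence $E$ has non-empty interior.

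There is no real obstacle here; the only thing to be a little careful about is to spell out that each $\alpha(g^{-1},E)$ really is closed (which follows from compactness of $E$ and continuity of $\alpha(g^{-1},\cdot)$) so that Baire applies, and that the interior transports correctly under the homeomorphism $\alpha(g_0,\cdot)$.
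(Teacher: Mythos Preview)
Your proof is correct and follows essentially the same approach as the paper: both use the Baire Category Theorem on the countable closed cover $\{\alpha(g^{-1},E)\}_{g\in H}$ to obtain some translate with non-empty interior, and then push this back to $E$ via the homeomorphism $\alpha(g_0,\cdot)$. The paper's version is slightly terser (phrasing it in terms of nowhere dense sets), but the argument is the same.
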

\begin{proof}
	Clearly, $\alpha(g\inv,E)$ is closed, so if it has empty interior it is nowhere dense.
	By the Baire Category \refthm{baire}, at least one of the $\alpha(g\inv,E)$ can not be nowhere dense.
	As $\alpha(g\inv,\cdot)$ is an homeomorphism, $E$ itself has non-empty interior.
\end{proof}

An insightful argument from \cite{frequentlystableisalmostautomorphic} will be used twice in the following.
We formulate it as a proposition.
\begin{prop}
	\label{prop:supportofgeneratedmeasure}
	Let $(X,\alpha,G)$ be minimal and let $B \in \Kc(X)$ be a set with non-empty interior.
	Then any weak-$*$-cluster point $M$ of
	\[ \kl  \frac{1}{m(F_n)} \int_{F_n} \delta_{\hat\alpha(g,B)} \dd m(g) \kr_{n \in \N} \]
	is supported on $J := \ml K \in \Kc(X) \mm \tint(K) \neq \emptyset \mr$.
\end{prop}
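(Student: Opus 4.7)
The plan is to leverage minimality of $(X,\alpha,G)$ to cover $X$ with finitely many iterates of $B$, use the abelian hypothesis on $G$ to upgrade this to a covering that holds for every translate $\hat\alpha(g,B)$ simultaneously, and then invoke closedness of the corresponding subset of $\Kc(X)$ to transfer the covering property to weak-$*$-cluster measures. Non-empty interior will then follow from \reflem{coveringimpliesnonemptyinterior}.

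In detail, I first pick a non-empty open set $U \subseteq B$. By minimality, $X = \bigcup_{g \in G} \alpha(g,U)$, and by compactness a finite $F \subseteq G$ suffices, giving $X = \bigcup_{f \in F} \alpha(f,U) \subseteq \bigcup_{f \in F} \alpha(f,B)$. The crucial uniform covering step follows from commutativity: for every $h \in G$,
\[ X = \alpha(h,X) = \bigcup_{f \in F} \alpha(hf,B) = \bigcup_{f \in F} \alpha(f,\hat\alpha(h,B)) \,. \]
Hence every iterate $\hat\alpha(g,B)$ lies in
\[ C := \ml K \in \Kc(X) \mm X = \bigcup_{f \in F} \alpha(f,K) \mr \,. \]

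By \reflem{stabilityofcovering}, $C$ is sequentially (hence topologically) closed in $\Kc(X)$, so each approximating measure $\frac{1}{m(F_n)} \int_{F_n} \delta_{\hat\alpha(g,B)} \dd m(g)$ is supported on $C$, and so is any weak-$*$-cluster point $M$. Finally, applying \reflem{coveringimpliesnonemptyinterior} with the finite (and hence countable) set $H = F\inv$ shows that every $K \in C$ has non-empty interior, so $C \subseteq J$ and $M$ is supported on $J$, as claimed. I expect the only subtle point to be the abelian upgrade yielding the uniform covering, which is the crucial use of the commutativity hypothesis; everything else is routine topology on the hyperspace.
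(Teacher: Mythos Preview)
Your proof is correct and follows essentially the same approach as the paper's: obtain a finite covering $X=\bigcup_{f\in F}\alpha(f,B)$ from minimality, propagate it to all translates via commutativity, pass to the limit using \reflem{stabilityofcovering}, and conclude non-empty interior via \reflem{coveringimpliesnonemptyinterior}. The only cosmetic difference is that the paper phrases the limiting step as ``$M$ is supported on the orbit closure of $B$'' whereas you work directly with the closed set $C=\{K:\bigcup_{f\in F}\alpha(f,K)=X\}$; both amount to the same thing.
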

\begin{proof}
	By minimality, there is a finite $F \subseteq G$ such that $X = \bigcup_{f \in F} \alpha(f, B)$.
	Now this shows that
	\[ \bigcup_{f \in F} \alpha(f, \alpha(g,B)) = \bigcup_{f \in F} \alpha(fg,B) = \alpha\kl g, \bigcup_{f \in F} \alpha(f,B) \kr = \alpha(g,X) = X \,. \]
	By \reflem{stabilityofcovering}, any point $K$ in the orbit closure of $B$ 
	must satisfy $\bigcup_{f \in F} \alpha(f,K) = X$.
	By \reflem{coveringimpliesnonemptyinterior}, we have $K \in J$.
	Clearly, $M$ is supported on the orbit closure of $B$ and thus on $J$.
\end{proof}
\begin{provencor}
	\label{cor:existenceofergodiccomponentsupportedonJ}
	Let $M$ be supported on $J$ and $E$ be a measure on the ergodic measures such that $M = \int M_e \dd E(M_e)$,
	as in \refthm{ergodicdecomp}.
	As $\int M_e(J) \dd E(M_e) = M(J) = 1$, we have $M_e(J) = 1$ for $E$-almost all $M_e$.
\end{provencor}

\begin{lem}[\cite{frequentlystableisalmostautomorphic}]
	\label{lem:decompositionofJ}
	If a measure $M$ is supported on $J$, then
	there is $B \in J$ such that $M( \ml K \in J \mm B \subseteq K \mr) > 0$.
\end{lem}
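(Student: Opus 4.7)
The plan is to express $J$ as a countable union of the closed sets $C(X, 1/n)$ from \reflem{topologyonkcx}, locate a single $n$ for which $C(X, 1/n)$ carries positive $M$-mass, and then pigeonhole down to a single closed ball using compactness of $X$.

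First I would verify that $J = \bigcup_{n\in\N} C(X, 1/n)$. The inclusion $\supseteq$ is immediate from the definitions, since $K \supseteq \overline{B_\delta(x)}$ implies $\tint(K) \supseteq B_\delta(x) \neq \emptyset$. For $\subseteq$, given $K \in J$ pick $x \in X$ and $\delta > 0$ with $B_\delta(x) \subseteq K$; any $n$ with $1/n \leqslant \delta/2$ then gives $\overline{B_{1/n}(x)} \subseteq K$. Since $M$ is supported on $J$ and this union is countable, there is some $n \in \N$ with $M(C(X,1/n)) > 0$.

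Next, using compactness of $X$, I would pick $x_1, \ldots, x_k \in X$ with $X = \bigcup_{i=1}^k B_{1/(2n)}(x_i)$ and claim
\[ C(X, 1/n) \;\subseteq\; \bigcup_{i=1}^{k} \ml K \in \Kc(X) \mm \overline{B_{1/(2n)}(x_i)} \subseteq K \mr \,. \]
Indeed, if $\overline{B_{1/n}(x)} \subseteq K$ and $x \in B_{1/(2n)}(x_i)$, then for every $y \in \overline{B_{1/(2n)}(x_i)}$ the triangle inequality gives $d(y,x) \leqslant d(y,x_i) + d(x_i,x) \leqslant 1/n$, so $y \in \overline{B_{1/n}(x)} \subseteq K$. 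Finite additivity then produces some $i$ for which $M\bigl(\ml K \in \Kc(X) \mm \overline{B_{1/(2n)}(x_i)} \subseteq K \mr\bigr) > 0$.

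Setting $B := \overline{B_{1/(2n)}(x_i)}$ would complete the proof: $B$ contains the non-empty open ball $B_{1/(2n)}(x_i)$ so $B \in J$, and any $K \in \Kc(X)$ with $B \subseteq K$ automatically inherits non-empty interior, so $\ml K \in J \mm B \subseteq K \mr$ equals the positive-measure set above. The only loose end is measurability of such containment sets, but this is painless because they are $d_\Hc$-closed: if $b \in B \subseteq K_n$ and $K_n \to K$, then $d(b,K) \leqslant d_\Hc(K_n,K) \to 0$, so $b \in K$. I do not anticipate any serious obstacle; the only thing to get right is the bookkeeping between the scales $1/n$ and $1/(2n)$ so that the triangle inequality closes.
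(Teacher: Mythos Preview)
Your proof is correct and follows essentially the same approach as the paper: decompose $J$ into countably many sets of the form $\{K \mid \overline{B_\rho(p)} \subseteq K\}$ and apply $\sigma$-additivity. The only difference is that the paper does this in a single step by fixing a countable dense set $D \subseteq X$ and writing $J = \bigcup_{n\in\N}\bigcup_{d\in D} C(\{d\},1/n)$, whereas you first isolate a scale $1/n$ and then invoke compactness to pass to finitely many centers---a minor organizational variation rather than a different idea.
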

\begin{proof}
	Let $D \subseteq X$ be countable and dense.
	Recall the notation from \reflem{topologyonkcx} and observe that
	\[ J = \bigcup_{n \in \N} \bigcup_{d \in D} C\kl \ml d \mr, \frac 1n \kr \,.\]
	So the claim follows by $\sigma$-additivity.
\end{proof}

\subsection{Multivalued Functions}
In order to study the preimages of points,
it is useful to understand \textdef{multivalued functions}.
Those are maps $\Psi: Y \to \Pot X$, written as $\Psi: Y \twoheadrightarrow X$.
The \textdef{upper preimage} $\Psi^u(A)$ of $A \subseteq X$ is given by
\begin{align*}
\Psi^u(A) :=& \ml \high{11} y \in Y \mm \Psi(y) \subseteq A  \mr
\end{align*}

\begin{defn}[Upper Hemi-Continuity]
	\label{def:hemicont}
	$\Psi$ is called
	\textdef{upper} \textdef{hemi-continuous}\footnote
		{
			We follow \cite{infinitedimensional} and use the name \enquote{hemi-continuity}.
			The usage of the name \enquote{semi-continuity} is more common.
			As however the notion of semi-continuity for single-valued functions does not coincide with the notion of hemi-continuity if viewed as a singleton-valued function we use the less common variant in order to be precise.
		}
	if
	the upper preimages of open sets are open.
\end{defn}

The \textdef{graph} of $\Psi$ is given by
\begin{align*}
	\text{Graph}(\Psi) :=& \ml \high{11} (y,x) \in Y \times X \mm x \in \Psi(y) \mr \,.
\end{align*}
Furthermore, we call $\Psi$ \textdef{closed valued} if $\Psi(y) \subseteq X$ is closed for any $y \in Y$.

The Closed Graph Theorem for multivalued functions states that:
\begin{proventhm}[{\cite[Theorem 17.11.]{infinitedimensional}}] 
	\label{thm:closedgraph} 
	The following are equivalent: 
	\begin{enumerate}[a)] 
		\item
			$\mathrm{Graph}(\Psi)$ is closed in the product topology.
		\item
			$\Psi$ is closed-valued and upper hemi-continuous. \qedhere
	\end{enumerate} 
\end{proventhm} 
\begin{cor}
	\label{cor:hemicontinuityofpreimage}
	Let $f: X \to Y$ be continuous.
	Then the multivalued function
	\[ f\inv: Y \longtwoheadarrow X, \; y \longmapsto f\inv\kl \ml y \mr\kr \]
	is upper hemi-continuous.
\end{cor}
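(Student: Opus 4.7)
The plan is to invoke the Closed Graph Theorem (\refthm{closedgraph}) directly, so that it suffices to verify that $f\inv$ is closed-valued and has closed graph in $Y \times X$.

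First, I would observe that $f\inv(\ml y \mr)$ is closed for every $y \in Y$: since $Y$ is a metric (hence Hausdorff) space, the singleton $\ml y\mr$ is closed, and $f$ is continuous, so its preimage is closed in $X$. This establishes the closed-valued property.

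Next, I would show that the graph
\[ \mathrm{Graph}(f\inv) = \ml (y,x) \in Y \times X \mm f(x) = y \mr \]
is closed in the product topology. The cleanest way is to write it as the preimage of the diagonal $\Delta_Y = \ml (y,y) \mm y \in Y \mr \subseteq Y \times Y$ under the continuous map $(y,x) \mapsto (y,f(x))$. Since $\Delta_Y$ is closed (again using that $Y$ is Hausdorff), the graph is closed. Alternatively, one can take a convergent sequence $(y_n,x_n) \to (y,x)$ with $f(x_n) = y_n$ and use continuity of $f$ to conclude $f(x) = y$.

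With both conditions verified, \refthm{closedgraph} immediately yields that $f\inv$ is upper hemi-continuous. There is no real obstacle here; the only subtlety is recognising that closedness of singletons (which follows from $Y$ being a metric space in our standing assumptions) is what powers both parts of the argument.
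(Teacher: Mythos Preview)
Your proposal is correct and follows essentially the same approach as the paper: both verify closed-valuedness and closedness of the graph, then invoke \refthm{closedgraph}. The only cosmetic difference is that the paper shows $\mathrm{Graph}(f\inv)$ is closed by observing it is the image of $\mathrm{Graph}(f)$ under the coordinate-swap homeomorphism $s:(x,y)\mapsto(y,x)$, whereas you realise it as the preimage of the diagonal under $(y,x)\mapsto(y,f(x))$; both arguments are standard and equivalent.
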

\begin{proof}
	As $f$ is continuous, $\Graph(f)$ is closed.
	Note that 
	\[ s: X\times Y \longrightarrow Y\times X, \; (x,y) \longmapsto (y,x) \]
	is a homeomorphism.
	Thus, $\Graph(f\inv) = s(\Graph(f))$ is closed.
	Furthermore, $f\inv\kl \ml y \mr\kr$ is closed, so $f\inv$ is closed-valued.
\end{proof}

This yields an important property of the multivariate diameter of preimages.
\begin{lem}
	\label{lem:diamanduppersemicont}
	If $\Psi: Y \twoheadrightarrow X$ 
	is upper hemi-continuous,
	then
	\[ \diam_m\circ \Psi: Y \longrightarrow \R, \; y \longmapsto \diam_m(\Psi(y))\] 
	is an upper semi-continuous real-valued function for any $m \in \N$.
\end{lem}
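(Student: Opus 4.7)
The plan is to show that for every $c \in \R$ the sublevel set
\[ U_c := \ml y \in Y \mm \diam_m(\Psi(y)) < c \mr \]
is open, which is equivalent to upper semi-continuity. To this end I fix $y_0 \in U_c$ and aim to produce an open neighbourhood of $y_0$ contained in $U_c$ by \emph{thickening} $\Psi(y_0)$ to an open set $A$ of only slightly larger multivariate diameter and then passing through upper hemi-continuity. Concretely, I would set $K := \Psi(y_0)$, pick $\varepsilon > 0$ with $\diam_m(K) + 2\varepsilon < c$, and take $A := B_\varepsilon(K) \subseteq X$, which is open.

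The key bookkeeping step is to check that $\diam_m(A) \leqslant \diam_m(K) + 2\varepsilon$. This is precisely \reflem{densenessanddiam}: applying it with $A$ there replaced by $K$ and $B$ there replaced by $B_\varepsilon(K)$ (the inclusion $B_\varepsilon(K) \subseteq B_\varepsilon(K)$ being trivial) yields $\diam_m(K) \geqslant \diam_m(B_\varepsilon(K)) - 2\varepsilon$, i.e.\ $\diam_m(A) < c$. Together with the (immediate, from the definition as a supremum) monotonicity $\diam_m(B) \leqslant \diam_m(A)$ whenever $B \subseteq A$, this gives the desired uniform bound on the diameters of all subsets of $A$.

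With $A$ in hand, I invoke upper hemi-continuity of $\Psi$: since $A$ is open and $\Psi(y_0) = K \subseteq A$, the set $\Psi^u(A)$ is an open neighbourhood of $y_0$. For any $y \in \Psi^u(A)$ we have $\Psi(y) \subseteq A$ and hence
\[ \diam_m(\Psi(y)) \leqslant \diam_m(A) \leqslant \diam_m(K) + 2\varepsilon < c\,, \]
so $\Psi^u(A) \subseteq U_c$, proving $U_c$ is open.

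I do not expect any real obstacle here; the only content-bearing ingredient is \reflem{densenessanddiam}, which exactly handles the perturbation from a closed set to its open $\varepsilon$-thickening, and without which one would have to work harder (e.g.\ by using \reflem{estimatefordm} pointwise on $m$-tuples in $A$ together with closed-valuedness of $\Psi$ from \refthm{closedgraph}). The monotonicity of $\diam_m$ and the equivalence between upper semi-continuity and openness of sublevel sets are standard and need only a sentence each.
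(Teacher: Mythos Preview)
Your proof is correct and follows essentially the same route as the paper: thicken $\Psi(y_0)$ to the open set $B_\varepsilon(\Psi(y_0))$, use upper hemi-continuity to obtain an open neighbourhood $\Psi^u(B_\varepsilon(\Psi(y_0)))$ of $y_0$, and control the $m$-diameter of the thickening via \reflem{densenessanddiam}. The only cosmetic differences are that the paper phrases upper semi-continuity in the $\varepsilon$-$\delta$ form rather than via sublevel sets, and it unpacks the diameter estimate pointwise on $m$-tuples (in effect \reflem{estimatefordm}) instead of invoking \reflem{densenessanddiam} at the set level as you do.
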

\begin{proof}
	Let $\varepsilon > 0$ and let $\delta = \frac{\varepsilon}{2}$.
	The set of all $y' \in Y$ for which
	\[ \Psi(y') \subseteq B_\delta(\Psi(y)) \]
	is an open neighbourhood of $y$.
	Now let $a_1',\ldots,a_m' \in \Psi(y')$.
	There must be $a_1,\ldots,a_m \in \Psi(y)$ with $d(a_i,a_i') < \delta$.
	Now by \reflem{densenessanddiam} we have
	\begin{align*}
		\Dm(a_1',\ldots,a_m') &\leqslant \Dm(a_1,\ldots,a_m) + 2 \delta \\
		&\leqslant \diam_m(\Psi(y)) + 2 \delta = \diam_m(\Psi(y)) + \varepsilon \,.
	\end{align*}
	Taking the supremum we get $\diam_m(\Psi(y')) < \diam_m(\Psi(y))+\varepsilon$ for any $y'$ in an open neighbourhood of $y$.
	As $\varepsilon$ was arbitrary, this proves the upper semi-continuity.
\end{proof}

\subsection{Degree of Denseness}
We define the \textdef{denseness} of a compact $A \subseteq X$ as
\[ \denseness(A) := \inf\ml \varepsilon>0 \mm B_\varepsilon(A) = X \mr = d_\Hc(A,X) \,.\]
\begin{prop}
	\label{prop:continuitydenseness}
	If $K\subseteq G$ is compact, then
	\[ \kappa_K: X \longrightarrow \R^+_0,\; x \longmapsto \sup\ml d(y,\alpha(K,x)) \mm y \in X \mr = \denseness(\alpha(K,x)) \]
	is continuous.
\end{prop}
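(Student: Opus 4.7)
The plan is to factor $\kappa_K$ through the hyperspace $(\Kc(X), d_\Hc)$ and then invoke continuity of the Hausdorff metric together with uniform continuity of $\alpha$ on $K \times X$.

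\medskip

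First, I would observe that $\kappa_K(x) = d_\Hc(\alpha(K,x), X)$, so it suffices to show that the map
\[
\Phi_K: X \longrightarrow \Kc(X), \; x \longmapsto \alpha(K,x)
\]
is continuous, since $d_\Hc(\cdot, X)$ is $1$-Lipschitz in its first argument (this follows directly from the triangle inequality for $d_\Hc$). Note that $\alpha(K,x)$ is indeed compact as the continuous image of the compact set $K$ under $\alpha(\cdot, x)$, so $\Phi_K$ lands in $\Kc(X)$.

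\medskip

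For the continuity of $\Phi_K$, I would use that the restriction $\alpha\restrict_{K \times X}: K \times X \to X$ is continuous on a compact metric domain, hence uniformly continuous. Given $\varepsilon > 0$, pick $\delta > 0$ such that $d(x, x') < \delta$ implies $d(\alpha(g,x), \alpha(g,x')) < \varepsilon$ for all $g \in K$. Then for such $x, x'$ we have $\alpha(K, x) \subseteq B_\varepsilon(\alpha(K,x'))$ and symmetrically $\alpha(K, x') \subseteq B_\varepsilon(\alpha(K,x))$, which by the definition of $d_\Hc$ gives $d_\Hc(\Phi_K(x), \Phi_K(x')) \leqslant \varepsilon$. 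Composing with $d_\Hc(\cdot, X)$ yields continuity of $\kappa_K$.

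\medskip

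I do not anticipate any real obstacle here: the only point requiring a moment of care is justifying that $\alpha$ is uniformly continuous on $K \times X$, which is immediate because $K \times X$ is compact metric (using that $G$ is locally compact and hence $K$ is metrisable when one equips $G$ with a compatible metric on compact subsets — or, more cleanly, one uses the equivalent characterisation of uniform continuity on compact metric spaces). The equality $\sup\{d(y, \alpha(K,x)) \mid y \in X\} = d_\Hc(\alpha(K,x), X)$ stated in the proposition is the formulation of the Hausdorff distance from a subset to the whole space (since the other term, $\sup_{z \in \alpha(K,x)} d(z, X)$, vanishes).
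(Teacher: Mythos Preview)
Your proof is correct and follows essentially the same approach as the paper: factor $\kappa_K$ as $x \mapsto \alpha(K,x) \mapsto d_\Hc(\alpha(K,x),X)$, and establish continuity of the first arrow via a uniform $\delta$--$\varepsilon$ estimate over $K$. The only cosmetic difference is that the paper obtains the equicontinuity of $\{\alpha(k,\cdot) : k \in K\}$ by invoking Arzel\`a--Ascoli (\refthm{ascoli}), whereas you obtain it from uniform continuity of $\alpha$ on the compact set $K\times X$; these are equivalent observations, and your parenthetical worry about metrisability of $K$ is unnecessary since the argument goes through with the natural (group) uniformity on $G$ or, even more directly, via a compactness argument in $K\times X\times X$.
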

\begin{proof}
	The Arzelà–Ascoli \refthm{ascoli} implies that $\ml \alpha(k,\cdot) \mm k \in K \mr$
	is equicontinuous.
	Thus, for any $x \in X$ and $\varepsilon > 0$ there is $\delta > 0$ such that
	$\alpha(k,y) \subseteq B_\varepsilon(\alpha(k,x))$ for any $k \in K$ and $y \in B_\delta(x)$.
	Therefore, $\alpha(K,y) \subseteq B_\varepsilon(\alpha(K,x))$ for any $y \in B_\delta(x)$.
	Hence, 
	\[ F_K : X \longrightarrow (\Kc(X),d_\Hc), \;x \longmapsto \alpha(K,x) \]
	is continuous.
	Therefore, $x \mapsto \kappa_K(x) = d_\Hc(F_K(x),X)$ is continuous.
\end{proof}

\begin{prop}
	\label{prop:densenesspointwisetozerominimality}
	Let $(X,\alpha,G)$ be minimal.
	Let $(K_n)_{n\in \N} \in \Pot G^\N$ be an increasing sequence of
	subsets of the group $G$.
	If $\bigcup_{n\in\N} K_n = G$
	then $(\kappa_{K_n})_{n\in\N}$ converges monotonically and pointwise to zero.
\end{prop}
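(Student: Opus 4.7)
The plan is to handle the two conclusions separately: first show that the sequence decreases (the easy part), then show pointwise convergence to zero using minimality together with the compactness of $X$ to extract a finite cover by orbit points (the slightly less trivial part).

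For monotonicity, fix $x \in X$. Since $K_n \subseteq K_{n+1}$, we have $\alpha(K_n,x) \subseteq \alpha(K_{n+1},x)$, and consequently $B_\varepsilon(\alpha(K_n,x)) \subseteq B_\varepsilon(\alpha(K_{n+1},x))$ for every $\varepsilon > 0$. Hence any $\varepsilon$ admissible in the definition of $\denseness(\alpha(K_n,x))$ is also admissible for $\denseness(\alpha(K_{n+1},x))$, giving $\kappa_{K_{n+1}}(x) \leqslant \kappa_{K_n}(x)$.

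For pointwise convergence to zero, fix $x \in X$ and $\varepsilon > 0$. By minimality of $(X,\alpha,G)$, the orbit $\alpha(G,x)$ is dense, so $\ml B_\varepsilon(\alpha(g,x)) \mm g \in G \mr$ is an open cover of $X$. By compactness, there exist finitely many $g_1,\ldots,g_k \in G$ with $X = \bigcup_{i=1}^k B_\varepsilon(\alpha(g_i,x))$. Since $(K_n)_{n\in\N}$ is increasing with $\bigcup_{n} K_n = G$, there is $N \in \N$ such that $\ml g_1,\ldots,g_k \mr \subseteq K_N$. Then for every $n \geqslant N$ we have $\ml \alpha(g_i,x) \mm 1\leqslant i \leqslant k\mr \subseteq \alpha(K_n,x)$, so $X = B_\varepsilon(\alpha(K_n,x))$ and hence $\kappa_{K_n}(x) = \denseness(\alpha(K_n,x)) \leqslant \varepsilon$. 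Since $\varepsilon > 0$ was arbitrary, $\kappa_{K_n}(x) \to 0$.

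I do not expect any real obstacle here; the one point worth flagging is that \emph{uniformity} of the finite cover (which is what converts density of the orbit into a statement about $K_N$ capturing all relevant group elements at once) comes from compactness of $X$, not from any assumption on $G$. Note in particular that no topological hypothesis on the $K_n$ is used, which is consistent with the statement of the proposition.
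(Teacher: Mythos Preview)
Your proof is correct and follows essentially the same approach as the paper: use minimality plus compactness of $X$ to obtain finitely many group elements whose orbit points are $\varepsilon$-dense, then use that the increasing $K_n$ eventually contain this finite set. The only cosmetic difference is that the paper first fixes an $\frac{\varepsilon}{2}$-net of arbitrary points in $X$ and then approximates each by an orbit point, whereas you cover $X$ directly by $\varepsilon$-balls centred at orbit points; both yield the same conclusion.
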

\begin{proof}
	Let $\varepsilon > 0$ and $x\in X$.
	In order to cover $X$, choose finitely many $x_1,\ldots, x_n \in X$ such that
	\[ \bigcup_{i=1}^n B_{\frac \varepsilon 2}(x_i) = X \,.\]
	As $(X,\alpha,G)$ is minimal, $\alpha(G,x)$ is dense.
	For each $i\in \ml 1,\ldots, n\mr$ there is an element $g_i\in G$
	such that $\alpha(g_i,x) \in B_{\frac \varepsilon 2}(x_i)$.
	As $\bigcup_{n\in\N} K_n = G$, there is $N \in \N$
	such that $\ml g_1, \ldots, g_n \mr \subseteq K_n$ for any $n > N$.
	Thus, $\kappa_{K_n}(x) \leqslant \varepsilon$.
	This shows the pointwise convergence. 
	The convergence is monotone due to the monotonicity of $(K_n)_{n\in\N}$.
\end{proof}
\begin{provencor}
	\label{cor:uniformconvergencedenseness}
	By {Dini}'s \refthm{dini} and \refprop{continuitydenseness}, $(\kappa_{K_n})_{n\in\N}$ converges uniformly to zero if the $K_n$ are compact.
\end{provencor}

With these basic results established, we now turn to the analysis of the maximal equicontinuous factor and its role in characterising dynamical properties.

\section{The Maximal Equicontinuous Factor}
\label{sec:MEF}
A topological dynamical system $(Y,\beta,G)$ is \textdef{equicontinuous}
if the family
\[ \ml \beta(g,\cdot) : X \to X \mm g \in G \mr \]
is equicontinuous, \ie
if for any $\varepsilon > 0$ there is a $\delta > 0$ such that 
$d_Y(y,y')<\delta$ implies $d_Y(\beta(g,y),\beta(g,y'))<\varepsilon$ for all $g \in G$.
This is equivalent to the existence of a topologically equivalent $\beta$-invariant metric on $Y$.

An equivalent characterization of minimal equicontinuous systems is given by
\begin{thm}
	\label{thm:classification}
	Let $(Y,\beta,G)$ be minimal.
	$(Y,\beta,G)$ is equicontinuous if and only if there is a compact abelian group $K$, a continuous group homomorphism $\Psi: G \to K$
	and a homeomorphism $\Phi: Y \to K$ such that
	$\Phi(\beta(g,y)) = \Psi(g)\Phi(y)$.
\end{thm}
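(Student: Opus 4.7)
The plan is to split the equivalence: the \enquote{if} direction is routine, while the \enquote{only if} direction will be proven via the classical Ellis-group construction. For \enquote{if}, given a conjugacy $\Phi : Y \to K$ to translation by $\Psi(g)$, equip $K$ with a translation-invariant metric; translations are then isometries, hence equicontinuous, and $\Phi$ (a homeomorphism of compact metric spaces, hence uniformly continuous in both directions) pulls this back to equicontinuity of $\beta$.

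For \enquote{only if}, I would let $K$ be the closure of $\Psi(G) := \{\,\beta(g,\cdot) : g \in G\,\}$ inside $\Cc(Y,Y)$ with the topology of uniform convergence. Equicontinuity together with \refthm{ascoli} yields that $K$ is compact. I would then verify that $K$ is a compact abelian topological group under composition: closure under composition is immediate from $\beta(g,\cdot) \circ \beta(g',\cdot) = \beta(g+g',\cdot)$; joint continuity of composition follows from uniform convergence combined with the uniform continuity of each $h \in K$ on the compact space $Y$; and inverses are produced by extracting, for $h = \lim_n \beta(g_n,\cdot)$, a convergent subsequence of $(\beta(-g_n,\cdot))$ (available by compactness of $K$), whose limit $h'$ satisfies $h \circ h' = \mathrm{id}$. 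Commutativity of $K$ is inherited from $G$ via continuity of composition, since $\Psi(G)$ is commutative and dense in $K$.

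The homeomorphism $\Phi$ is obtained from the evaluation map at a fixed base point. Pick $y_0 \in Y$ and define $\mathrm{ev} : K \to Y$, $h \mapsto h(y_0)$. This map is continuous; it is surjective because $\mathrm{ev}(K) \supseteq \overline{\beta(G,y_0)} = Y$ by minimality and compactness of $K$. For injectivity, if $h(y_0) = h'(y_0)$ then $k := h\inv \circ h'$ fixes $y_0$; for arbitrary $y \in Y$ write $y = h''(y_0)$ by surjectivity, and commutativity of $K$ then gives $k(y) = k(h''(y_0)) = h''(k(y_0)) = h''(y_0) = y$, so $k = \mathrm{id}$. A continuous bijection from a compact space to a Hausdorff space is a homeomorphism, so $\Phi := \mathrm{ev}\inv$ works; equivariance follows by unwinding, since $y = h(y_0)$ implies $\beta(g,y) = \Psi(g)(h(y_0)) = (\Psi(g) \circ h)(y_0)$, hence $\Phi(\beta(g,y)) = \Psi(g) \circ \Phi(y)$.

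The main obstacle is establishing the topological group structure on $K$, in particular closure under inverses, which does not come from a direct algebraic manipulation but rather from compactness of $K$ via Arzelà-Ascoli combined with a subsequence argument. A secondary key point is that commutativity forces the stabiliser of $y_0$ in $K$ to act trivially on all of $Y$, which is precisely what makes $\mathrm{ev}$ injective and lets one identify $Y$ itself (and not merely a quotient $K/K_{y_0}$) with the Ellis group $K$; without abelianness of $G$ one would only obtain that the MEF is a homogeneous space of a compact group.
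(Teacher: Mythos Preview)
Your proposal is correct, and in fact the paper does not supply its own proof of this classical result: it merely cites an external reference (\enquote{An abstract and modern presentation of this classical fact can be found in \cite[Theorem 5.4]{halmosvonneumanngroups}}). Your argument via the Ellis/enveloping group is the standard route and is complete in its essentials. One small omission: you never explicitly verify that $\Psi: G \to K$, $g \mapsto \beta(g,\cdot)$, is continuous for the uniform topology on $K$; this follows from the joint continuity of $\beta$ together with compactness of $Y$ by a routine finite-subcover argument, but strictly speaking the theorem asks for a continuous homomorphism and this deserves a sentence.
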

An abstract and modern presentation of this classical fact can be found in \cite[Theorem 5.4]{halmosvonneumanngroups}.
\refthm{classification} and the uniqueness of the Haar measure from \refthm{haar} together imply
\begin{provencor}
	\label{cor:uniqueergodicityofminimalequicontinuous}
	Minimal and equicontinuous systems are uniquely ergodic.
\end{provencor}

Among all equicontinuous factors of $(X,\alpha,G)$, there is one that is maximal.
This is made precise in the following.
\begin{defn}[Maximal Equicontinuous Factor]
	\label{def:mef}
	A topological dynamical system $(Y,\beta,G)$ together with a factor map $\pi: (X,\alpha,G) \to (Y,\beta,G)$ is a
	\textdef{maximal equicontinuous factor} (MEF) of $(X,\alpha,G)$
	if
	$(Y,\beta,G)$ is equicontinuous and
	for any equicontinuous system $(Z,\gamma,G)$
	and each factor map $p: (X,\alpha,G) \to (Z,\gamma,G)$
	there is a unique factor map $q: (Y,\beta,G) \to (Z,\gamma,G)$ such that $p = q \circ \pi$.
	\begin{figure}[ht]
		\centering
		\catcode`\"=12
		\begin{tikzcd}
			(X,\alpha,G) \arrow[rrdd, "p"] \ar[dd, "\pi"]  & & \\
			& & \\
			 (Y,\beta,G) \arrow[rr, "\exists!\, q", dotted]                &  & (Z,\gamma,G)
		\end{tikzcd}
		\caption{Commutative Diagram of MEF}
		\label{fig:universalpropertyMEF}
	\end{figure}
\end{defn}

While the factor map $\pi$ plays a fundamental role in Definition \ref{def:mef},
it is common practice to notationally suppress it and call $(Y,\beta,G)$ a MEF.
This is justified because any other factor map $\pi': (X,\alpha,G) \to (Y,\beta,G)$ also satisfies
Figure \ref{fig:universalpropertyMEF}.
In fact, there is a conjugacy $a: (Y,\beta,G) \to (Y,\beta,G)$ such that $\pi' = a \circ \pi$.

\begin{proventhm}[{\cite[Chapter 9, Theorem 1]{auslander}}]
	\label{thm:existencemef}
	Every tds $(X,\alpha,G)$ has a MEF $(Y,\beta,G)$ and it is unique up to conjugacy.
\end{proventhm}

As the MEF captures all equicontinuous behavior, it is intuitive that within the fibers the relative dynamics should be sensitive in some sense.
This idea is formalised in the regionally proximal relation and the characterisation of the fibers by it.

Before studying the more complicated \textit{regionally} proximal relation let us define the proximal relation.
\begin{defn}[Proximal Relation]
	Let $m \in \N$ and define the \textdef{proximal $m$-relation}
	\[ \Pc_m := \ml (x_1,\ldots,x_m) \in X^m \mm \inf_{g \in G} D_m \kl \alpha(g,x_1),\ldots,\alpha(g,x_m)\kr = 0 \mr \,. \]
\end{defn}
Those are exactly the tuples that get arbitrarily close together simultaneously.

The regionally proximal relation is a coarser relation that requires the tuple to be arbitrarily well approximated by tuples
whose points simultaneously get as close together as one desires.
Let $\Delta_m$ be the diagonal in $X^m$ and let $\Uf(\Delta_m)$ be the system of its neighbourhoods.
\begin{defn}[Regionally Proximal Relation]
	Let $m \in \N$ and define the \textdef{regionally proximal $m$-relation}
	\[ \Qc_m := \bigcap \ml \overline{\alpha(G,V)} \mm V \in \Uf(\Delta_m) \mr \,.\]
\end{defn}
\begin{rem}
	Let $x_1,\ldots,x_m \in X$.
	Assume that $G$ is first countable.
	Then $(x_1,\ldots,x_m) \in \Qc_m$ if and only if 
	there are sequences $(x_{i,n})_{n\in \N} \in X^\N$ for $i \in \ml 1,\ldots, m \mr$ as well as $(g_n)_{n\in \N} \in G^\N$ 
	with $\lim_{n \to \infty} x_{i,k} = x_i$ and 
	\[ \lim_{n\to\infty} d(\alpha(g_n,x_{i,n}),\alpha(g_n,x_{j,n})) = 0 \] 
	for $i,j \in \ml 1,\ldots, m\mr$.
	In particular we have $\Pc_m \subseteq \Qc_m$.
	If $G$ is not first countable, we must replace sequences by nets.
	In the following we will keep using sequences for simplicity of notation but observe that all arguments work verbatim for nets as well.
\end{rem}
Observe that the action $\alpha$ extends to a diagonal action on $X^m$ by
\[ \alpha^{\otimes m}: G \times X^m \Longrightarrow X^m, \; (g,(x_1,\ldots,x_m)) \longmapsto (\alpha(g,x_1),\ldots,\alpha(g,x_m)) \,. \]

The following characterization resembles notions of sensitivity.
\begin{lem}
	\label{lem:sensitivitycharacterizationofregionallyproximal}
	Let $(X,\alpha,G)$ be minimal.
	Then $(x_1,\ldots,x_m) \in \Qc_m$
	if and only if
	for any $B \subseteq X$ with non-empty interior
	and any neighbourhoods $U_1,\ldots,U_m$ of $x_1,\ldots,x_m$ there is $h \in G$ such that
	$\alpha(h,B) \cap U_i \neq \emptyset$ for $i \in \ml 1, \ldots m \mr$.
\end{lem}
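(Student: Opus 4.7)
The plan is to prove both implications by constructing suitable open neighbourhoods of the diagonal $\Delta_m$ and translating between the two formulations.

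\emph{Forward direction.} Assume $(x_1,\ldots,x_m) \in \Qc_m$, and let $B\subseteq X$ have non-empty interior and $U_1,\ldots,U_m$ be neighbourhoods of $x_1,\ldots,x_m$. The key is to construct a neighbourhood $V$ of $\Delta_m$ witnessing that a single group element can simultaneously drag every coordinate of any tuple in $V$ into $\inte(B)$. By minimality, the family $\{\alpha(g\inv,\inte(B)) : g \in G\}$ is an open cover of $X$, so by compactness there is a finite $F \subseteq G$ with $X = \bigcup_{g\in F} \alpha(g\inv,\inte(B))$. Then
\[ V := \bigcup_{g \in F} \bigl(\alpha(g\inv,\inte(B))\bigr)^m \]
is open, and contains $\Delta_m$ since for every $x\in X$ some $g\in F$ satisfies $x\in \alpha(g\inv,\inte(B))$. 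Applying the definition of $\Qc_m$ to this $V$ and to the product neighbourhood $U_1\times\cdots\times U_m$ of $(x_1,\ldots,x_m)$ yields some $k\in G$ and $(v_1,\ldots,v_m)\in V$ with $\alpha(k,v_i)\in U_i$. Pick $g\in F$ with $\alpha(g,v_i)\in \inte(B)$ for every $i$ and set $b_i:=\alpha(g,v_i)\in B$ and $h:=kg\inv$; then $\alpha(h,b_i)=\alpha(k,v_i)\in U_i$, so $\alpha(h,B)\cap U_i\neq\emptyset$ for each $i$.

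\emph{Reverse direction.} Assume the sensitivity-type condition, and fix any open neighbourhood $V$ of $\Delta_m$ together with neighbourhoods $U_1,\ldots,U_m$ of $x_1,\ldots,x_m$. It suffices to produce one group element and one tuple in $V$ which maps into $U_1\times\cdots\times U_m$. Fix any $y\in X$; since $(y,\ldots,y)\in V$ and $V$ is open in the product topology, there exists $\delta>0$ with $B_\delta(y)^m \subseteq V$. Applying the hypothesis to $B:=B_\delta(y)$ (which has non-empty interior) yields $h\in G$ and points $b_i\in B$ with $\alpha(h,b_i)\in U_i$. Then $(b_1,\ldots,b_m)\in B^m \subseteq V$, so $(\alpha(h,b_1),\ldots,\alpha(h,b_m))$ lies in $\alpha^{\otimes m}(G,V)\cap (U_1\times\cdots\times U_m)$. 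As the $U_i$ were arbitrary, $(x_1,\ldots,x_m)\in \overline{\alpha^{\otimes m}(G,V)}$, and since $V$ was arbitrary, $(x_1,\ldots,x_m)\in\Qc_m$.

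The only nontrivial step is the construction of $V$ in the forward direction; the rest is essentially unpacking definitions. The main obstacle is ensuring that the witness tuple $(v_1,\ldots,v_m)$ obtained from $\Qc_m$ can be \emph{uniformly} pulled back into $B$ by a single group element: this is precisely why one needs the cover by translates of $\inte(B)$ indexed by a \emph{common finite set} $F$, which in turn uses minimality plus compactness. Without that uniformity one could only conclude that each coordinate $v_i$ individually returns to $B$ under some $g_i$, which does not yield a single $h$ fulfilling the conclusion.
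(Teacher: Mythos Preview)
Your proof is correct and follows essentially the same approach as the paper: in both directions one builds a neighbourhood of the diagonal as a union of cubes $(\alpha(g,B))^m$ and then unwinds the definition of $\Qc_m$. The paper's forward direction is marginally slicker in that it takes the union over all $g\in G$ (which is already $G$-invariant, so $\overline{\alpha^{\otimes m}(G,V)}=\overline V$) rather than first extracting a finite subcover via compactness; your closing remark that a common \emph{finite} index set is essential is therefore not quite right---the only point that matters is that every tuple in $V$ lies in a single cube, so a single group element moves all coordinates into $B$.
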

\begin{proof}
	\begin{description}
		\item[$\Longrightarrow$]
			Let $U_1,\ldots,U_m$ be neighbourhoods of $x_1,\ldots,x_m$ and
			\[ (x_1,\ldots,x_m) \in \Qc_m = \bigcap \ml \overline{\alpha(G,V)} \mm V \in \Uf(\Delta_m) \mr \,.\]
			Let $B \subseteq X$ be open.
			As $\alpha(g,\cdot)$ is a homeomorphism for any $g \in G$ the set
			\[ \alpha(G,B) = \bigcup_{g \in G} \alpha(g,B) \]
			is open and invariant.
			By minimality, we have $\alpha(G,B) = X$.

			Now $\alpha^{\otimes m}(G,B^m) \subseteq X^m$ is a neighbourhood of the diagonal.
			So 
			\[ (x_1,\ldots,x_m) \in  \overline{\alpha^{\otimes m}(G,\alpha(G,B^m))} =  \overline{\alpha^{\otimes m}(G,B^m)} \,. \] 
			As $U_1\times \ldots \times U_m$ is a neighbourhood of $(x_1,\ldots,x_m)$ we conclude
			\[  U_1\times \ldots \times U_m \cap \alpha^{\otimes m}(G,B^m) \neq \emptyset \,. \] 
			So there is $g \in G$ such that $\alpha(g,B) \cap U_i \neq \emptyset$ for $i \in \ml 1,\ldots,m \mr$.
			
		\item[$\Longleftarrow$]
			Let $(x_1,\ldots,x_m)$ be such that
			for any $B \subseteq X$ with non-empty interior
			and any neighbourhoods $U_1,\ldots,U_m$ of $x_1,\ldots,x_m$ there is $h \in G$ such that
			$\alpha(h,B) \cap U_i \neq \emptyset$ for $i \in \ml 1, \ldots m \mr$.
			We must show that $(x_1,\ldots,x_m)\in \Qc_m$, \ie
			\[ (x_1,\ldots,x_m) \in \overline{\alpha^{\otimes m}(G,V)} \]
			for any neighbourhood $V$ of the diagonal in $X^m$.
			So let $V \in \Uf(\Delta_m)$ and let $x \in X$.
			There is $\varepsilon > 0$ such that 
			$B_\varepsilon(x)^m \subseteq V$.
			Let $B := B_\varepsilon(x)$.
			We will show that $(x_1,\ldots,x_m) \in \overline{\alpha^{\otimes m}(G,B^m)}$.
			For any neighbourhood $U$ of $(x_1,\ldots,x_m)$
			we must show that $U \cap \alpha^{\otimes m}(G,B^m) \neq \emptyset$.
			By construction of the product topology, there are neighbourhoods $U_i$ of $x_i$ for $i \in \ml 1,\ldots,m\mr$,
			such that $U_1\times \ldots \times U_m \subseteq U$.
			By assumption, there is $h \in G$ such that $\alpha(h,B) \cap U_i \neq \emptyset$ for any $i \in \ldots \ml 1,\ldots, m \mr$.
			Therefore, 
			\[\alpha^{\otimes m}(G,V) \cap U \supseteq \alpha^{\otimes m}(h,B^m) \cap \times_{i=1}^m U_i \neq \emptyset \,. \qedhere\]
	\end{description}
\end{proof}

It is a classical fact that
\begin{proventhm}[{\cite[Chapter 9, Theorem 3]{auslander}}]
	Let $\Qc^*$ be the smallest $\alpha$-invariant, closed equivalence relation containing $\Qc_2$.
	Then $(X/{\Qc^*},\alpha/{\Qc^*}, G)$ is a MEF and so $(x_1,x_2) \in \Qc^*$ if and only if $\pi(x_1) = \pi(x_2)$
	where $\pi: (X,\alpha,G) \to (Y,\beta,G)$ is the factor map to the MEF.
\end{proventhm}

Interestingly, in our setting $\Qc_m$ exactly consists of those tuples that lie in the same fiber of the MEF.
In other words:
\begin{proventhm}[{\cite[Theorem 8]{auslandermultivariateregionallyproximal}}]
	\label{thm:multivariateregionallyproximal}
	Let the topological dynamical system $(X,\alpha,G)$ be minimal and $x_1,\ldots,x_m \in X$.
	Then $(x_1,x_j) \in \Qc_2$ for all $j \in \ml 2,\ldots, m\mr$ if and only if
	$(x_1,\ldots,x_m) \in \Qc_m$.
	Moreover, it holds that $\Qc_2 = \Qc^*$.
\end{proventhm}
This shows that higher-order regionally proximal behavior is already determined by pairwise conditions,
reflecting a certain simplicity in the structure of the MEF.

In our setting, $G$ is abelian, so the conditions required by Auslander’s theorem \cite[Theorem 8]{auslandermultivariateregionallyproximal} are automatically satisfied (as can be seen by \cite[Lemma 5 and the preceding remark]{auslandermultivariateregionallyproximal}).
Non-abelian groups introduce more complications not addressed here.

\section{Invertibility Properties}
\label{sec:invertibility}
For the entire section, let $\pi: (X,\alpha,G) \to (Y,\beta,G)$ be a factor map.
Let $d_X$ and $d_Y$ denote the metrics on $X$ and $Y$ respectively.
Recall the definition of the $m$-diameter
\[ \diam_m(A) = \sup\ml  \min_{1\leqslant i<j\leqslant m} d(a_i,a_j) \mm (a_1,\ldots,a_m) \in A^m \mr \,.\] 
Consider the map that assigns each fiber its $m$-diameter, which is given by
\[ \pi_*\diam_m: Y \longrightarrow \il 0, \diam_m(X) \ir, \; y \longmapsto \diam_m\kl\pi\inv\ml y\mr\kr \,.\]

\refcor{hemicontinuityofpreimage} and \reflem{diamanduppersemicont} together imply
\begin{provenlem}
	\label{lem:pidiam}
	$\pi_*\diam_m$ is upper semi-continuous.
\end{provenlem}

The study of the existence and structure of injectivity points of the factor map plays the key role in the theory of frequent stability and diam-mean equicontinuity.
Understanding where the factor map behaves like an $m$-to-1 map is crucial to classifying more delicate dynamical behaviors,
as we will see in the results that follow.
We generalise the concept of injectivity points to
\begin{defn}[$m$-jectivity]
	We call $x\in X$ an \textdef{$m$-jectivity point}
	if and only if
	\[ \card\kl \pi\inv\kl\ml \pi(x) \mr\kr \kr \leqslant m \,. \]
\end{defn}

Let $\Ic_m$ be the set of $m$-jectivity points and
\[ \Jc_m := \ml y \in Y \mm \card\kl \pi\inv(\ml y\mr)\kr \leqslant m \mr \,.\]
We can switch between both perspectives seamlessly, as
\begin{lem}
	\label{lem:injectivitypointsonbothsides}
	$\Ic_m = \pi\inv(\Jc_m)$.
\end{lem}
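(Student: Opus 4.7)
The statement $\mathcal{I}_m = \pi^{-1}(\mathcal{J}_m)$ is essentially a definitional unfolding, so the proof plan is short. The plan is to chase the two definitions and observe they produce the same membership condition on an arbitrary $x \in X$.

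First I would fix $x \in X$ and write out what $x \in \mathcal{I}_m$ means, namely $\card(\pi^{-1}(\{\pi(x)\})) \leqslant m$ by the definition of an $m$-jectivity point. Then I would write out what $x \in \pi^{-1}(\mathcal{J}_m)$ means: by definition of preimage this is $\pi(x) \in \mathcal{J}_m$, and by the definition of $\mathcal{J}_m$ this in turn means $\card(\pi^{-1}(\{\pi(x)\})) \leqslant m$. The two conditions are literally identical, so the two sets coincide.

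Since there is no real obstacle here, the only thing I would be careful about is to present the argument as two clean equivalences (either a single chain of iff's, or two inclusions done back-to-back) so that the reader sees that the lemma is purely formal and that the content lives elsewhere, namely in the topological or measure-theoretic properties of $\mathcal{I}_m$ and $\mathcal{J}_m$ that will be exploited in the later sections. No use of hemi-continuity, upper semi-continuity, or of \reflem{pidiam} is required; this lemma is only about the translation between the two viewpoints.
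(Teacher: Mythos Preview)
Your proposal is correct and essentially matches the paper's own proof, which also proceeds by unfolding the definitions (phrased there as two inclusions rather than a single chain of equivalences). No additional ideas are needed.
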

\begin{proof}
	If $x \in X$ is an $m$-jectivity point then $y=\pi(x)$ has at most $m$ preimages.
	Thus, $\Ic_m \subseteq \pi\inv(\Jc_m)$.
	Conversely, note that all $k$ preimages of a point $y\in \Jc_m$ are $m$-jectivity points.
	Thus, $\pi\inv(\Jc_m) \subseteq \Ic_m$.
\end{proof}

\begin{lem}
	\label{lem:mjecitvityanddiam}
	\label{lem:gdeltaofinjectivitypoints}
	Let $x \in X$. 
	Then $x \in \Ic_m$ if and only if
	$\pi_*\diam_{m+1}(\pi(x))= 0$.
	Moreover, $\Ic_m$ and $\Jc_m$ are $G_\delta$ sets, i.e.~countable intersections of open sets.
\end{lem}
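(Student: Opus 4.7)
My plan is to handle the two claims in sequence, using the results already established: the pigeon-hole characterisation implicit in $D_{m+1}$, Lemma \ref{lem:pidiam} on the upper semi-continuity of $\pi_*\diam_m$, and Lemma \ref{lem:injectivitypointsonbothsides} identifying $\Ic_m$ with $\pi\inv(\Jc_m)$.

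For the first equivalence, I would argue directly from the definition of $D_{m+1}$ as the minimum pairwise distance. If $x \in \Ic_m$, then $\pi\inv(\ml \pi(x) \mr)$ has at most $m$ elements, so every $(m+1)$-tuple drawn from this fiber must contain a repeated entry, forcing $D_{m+1} = 0$ on such tuples and hence $\pi_*\diam_{m+1}(\pi(x)) = 0$. Conversely, if $\pi_*\diam_{m+1}(\pi(x)) = 0$, then for any $(m+1)$-tuple $(z_1,\ldots,z_{m+1}) \in \pi\inv(\ml \pi(x) \mr)^{m+1}$ we have $\min_{i \neq j} d(z_i,z_j) = 0$, so at least two entries coincide; since this holds for every such tuple, the fiber contains at most $m$ distinct points, i.e.~$x \in \Ic_m$.

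For the $G_\delta$ property, I would first note that by the first part
\[ \Jc_m = \ml y \in Y \mm \pi_*\diam_{m+1}(y) = 0 \mr \,. \]
Since $\pi_*\diam_{m+1}$ is upper semi-continuous by Lemma \ref{lem:pidiam}, the sublevel sets $\ml y \in Y \mm \pi_*\diam_{m+1}(y) < \frac{1}{n} \mr$ are open for every $n \in \N$, so
\[ \Jc_m = \bigcap_{n \in \N} \ml y \in Y \mm \pi_*\diam_{m+1}(y) < \tfrac{1}{n} \mr \]
is a $G_\delta$ set. Using Lemma \ref{lem:injectivitypointsonbothsides}, $\Ic_m = \pi\inv(\Jc_m)$, and since $\pi$ is continuous, the preimage of a $G_\delta$ set is again $G_\delta$, finishing the claim.

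There is no genuine obstacle here: both parts follow essentially mechanically once one has the pigeon-hole interpretation of $D_{m+1}$ and the upper semi-continuity of $\pi_*\diam_{m+1}$ in hand. The only point to be mildly careful about is not to conflate $\diam_{m+1} = 0$ (which allows up to $m$ distinct elements) with injectivity in the literal sense; the combinatorial translation between \enquote{at most $m$ preimages} and \enquote{every $(m+1)$-tuple has a coincidence} is precisely what the definition of $D_{m+1}$ is designed to encode.
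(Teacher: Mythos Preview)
Your proposal is correct and follows essentially the same approach as the paper: the pigeon-hole/coincidence argument for the equivalence, and the decomposition of $\Jc_m$ as the intersection of open sublevel sets of the upper semi-continuous function $\pi_*\diam_{m+1}$, followed by pulling back along $\pi$ via Lemma~\ref{lem:injectivitypointsonbothsides}. The only cosmetic difference is that the paper phrases the converse direction contrapositively (if the fiber has $m+1$ distinct points, their minimal pairwise distance bounds $\diam_{m+1}$ from below), whereas you argue it directly.
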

\begin{proof}
	For the first part we show $\diam_{m+1}(C) = 0$ if and only if $\card(C) \leqslant m$.
	If $C$ has fewer than or equal to $m$ points,
	then you can not form a $(m+1)$-tuple of distinct points,
	hence $\diam_{m+1}(C)=0$.
	Conversely, if $C$ has least $(m+1)$ points you can use their positive minimal pairwise distance to bound $\diam_{m+1}(C)$ from below.

	By Lemma \ref{lem:pidiam}, $\pi_*\diam_{m+1}$ is upper semi-continuous.
	So
	\begin{align*}
		\Jc_m &= \pi_*\diam_{m+1}\inv\kl \ml 0 \mr \kr 
		= \diam_{m+1}\inv\kl \bigcap_{n\in \N} \pi_*\Big[0,\frac 1n\Big) \kr 
		= \bigcap_{n\in \N} \pi_*\diam_{m+1}\inv\kl \Big[0,\frac 1n\Big) \kr
	\end{align*}
	is a countable intersection of open sets.
	So $\Ic_m = \pi\inv(\Jc_m)$ is $G_\delta$.
\end{proof}

\begin{lem}
	\label{lem:diampreimageofball}
	Let $m \in \N$.
	If $x_0 \in \Ic_m$
	then for any $\varepsilon > 0$ there is an $\eta > 0$ such
	that $\diam_{m+1}\kl \pi\inv\kl B_\eta(y) \kr \kr < \varepsilon$ for any $y \in B_\eta(\pi(x_0))$.
\end{lem}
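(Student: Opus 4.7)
The plan is to argue by contradiction and use a compactness argument to transfer the assumed failure of the conclusion into the existence of $m+1$ distinct preimages of $\pi(x_0)$.

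Suppose the statement fails. Then there is some $\varepsilon > 0$ together with, for each $n \in \N$, a point $y_n \in B_{1/n}(\pi(x_0))$ such that
\[ \diam_{m+1}\kl \pi\inv\kl B_{1/n}(y_n) \kr \kr \geqslant \varepsilon \,. \]
By the definition of $\diam_{m+1}$ as a supremum, for each $n$ I can pick an $(m+1)$-tuple $(x_1^{(n)},\ldots,x_{m+1}^{(n)}) \in \pi\inv(B_{1/n}(y_n))^{m+1}$ with
\[ D_{m+1}(x_1^{(n)},\ldots,x_{m+1}^{(n)}) \geqslant \tfrac{\varepsilon}{2} \,. \]

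Next I invoke compactness of $X$: passing to a diagonal subsequence I may assume that for each $i \in \ml 1,\ldots,m+1\mr$ the sequence $(x_i^{(n)})_{n\in\N}$ converges to some $x_i \in X$. By the triangle inequality applied to $d_Y$, for each $i$ we have
\[ d_Y(\pi(x_i^{(n)}),\pi(x_0)) \leqslant d_Y(\pi(x_i^{(n)}),y_n) + d_Y(y_n,\pi(x_0)) < \tfrac{2}{n}\,, \]
so by continuity of $\pi$, the limits satisfy $\pi(x_i) = \pi(x_0)$ for every $i$. On the other hand, \reflem{estimatefordm} shows $D_{m+1}$ is continuous on $X^{m+1}$, hence
\[ D_{m+1}(x_1,\ldots,x_{m+1}) = \lim_{n\to\infty} D_{m+1}(x_1^{(n)},\ldots,x_{m+1}^{(n)}) \geqslant \tfrac{\varepsilon}{2} > 0\,. \]

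Since $D_{m+1}$ is the minimal pairwise distance, this positivity forces $x_1,\ldots,x_{m+1}$ to be pairwise distinct. But then $\pi\inv(\ml\pi(x_0)\mr)$ contains at least $m+1$ distinct points, contradicting $x_0 \in \Ic_m$. The argument is short and essentially standard; the only slightly delicate point is remembering to pick tuples that realise the diameter up to a definite lower bound (rather than exactly), since $\pi\inv(B_{1/n}(y_n))$ need not be closed and the supremum may not be attained. With that taken care of, the compactness-plus-continuity loop closes cleanly.
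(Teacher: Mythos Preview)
Your proof is correct and takes a genuinely different route from the paper. The paper proceeds abstractly: it shows that the multivalued maps $D_\rho:y\mapsto\{y':d_Y(y,y')\leqslant\rho\}$ are upper hemi-continuous both in $y$ and in $\rho$, composes with $\pi\inv$ and then with $\diam_{m+1}$ via \reflem{diamanduppersemicont}, and extracts $\eta$ in two stages (first in the radius variable, then in the centre variable) from the resulting upper semi-continuous real-valued functions. Your argument is a direct sequential compactness/diagonalisation, bypassing the hemi-continuity framework entirely. What the paper's approach buys is modularity: it reuses the Closed Graph Theorem and \reflem{diamanduppersemicont} that were set up earlier, so the lemma becomes a formal consequence of that machinery. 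What your approach buys is self-containment and transparency: one sees immediately why the failure of the conclusion manufactures $m+1$ distinct preimages of $\pi(x_0)$, and no auxiliary notions are needed beyond compactness of $X$ and continuity of $\pi$ and $D_{m+1}$. The only point worth flagging is cosmetic: your remark about the supremum possibly not being attained is well taken, and your fix (taking tuples with $D_{m+1}\geqslant\varepsilon/2$) is exactly right.
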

\begin{proof}
	Let $y_0 := \pi(x_0)$.
	Note that the multi-valued map
	\[ D_\rho: Y \longtwoheadarrow Y,\; y \longmapsto \ml y' \in Y \mm d_Y(y,y') \leqslant \rho \mr \]
	has a closed graph for any $\rho \geqslant 0$ by continuity of $d_Y$.
	\refthm{closedgraph} implies that $D_\rho$ is upper hemi-continuous.
	In turn $\pi\inv\circ D_\rho$ is upper hemi-continuous.
	Finally, by \reflem{diamanduppersemicont}, $\diam_{m+1} \circ \pi\inv \circ D_\rho$ is upper semi-continuous.
	The same logic yields the upper semi-continuity of the map $\diam_{m+1} \circ \pi\inv \circ D_\cdot(y_0)$, where
	\[ D_\cdot(y_0) : \R^+_0 \longtwoheadarrow Y, \; \eta \longmapsto D_\eta(y_0) \,.\]
	Thus, $\ml \eta \geqslant 0 \mm \diam_{m+1}\kl \pi\inv\kl D_\eta(y_0) \kr \kr < \varepsilon\mr$
	is open.
	It contains $0$, as $y_0$ has less than $m$ preimages which implies 
	\[ \diam_{m+1}\kl \pi\inv\kl D_0(y) \kr\kr = \diam_{m+1}\kl \pi\inv\kl \ml y \mr \kr\kr = 0 \,. \]
	Therefore, there is $\eta_1 > 0$ 
	such that
	$\diam_{m+1}\kl \pi\inv\kl D_{\eta_1}(y_0) \kr \kr < \varepsilon$.
	So
	\[ y_0 \in \ml y \in Y \mm \diam_{m+1}\kl \pi\inv\kl D_{\eta_1}(y) \kr \kr < \varepsilon \mr =: V \]
	As $y \mapsto \diam_{m+1} \circ \pi\inv \circ D_{\eta_1}(y)$ is upper semi-continuous, $V$ is open.
	Hence, there is $\eta_2>0$ such that
	$\diam_{m+1}\kl \pi\inv\kl D_{\eta_1}(y) \kr \kr< \varepsilon$ for all $y \in B_{\eta_2}(y_0)$.
	By monotonicity, $\eta : = {\min(\eta_1,\eta_2)} $ yields
	$\diam_{m+1}\kl \pi\inv\kl B_{\eta}(y) \kr \kr < \varepsilon$ 
	for any $y \in B_\eta(y_0)$.
\end{proof}

Let us now study specific ways in which a factor map can be finite-to-one.
\subsection{Almost Surely Finite-to-One}
Assume that $(Y,\beta,G)$ is uniquely ergodic with invariant measure $\nu$.
\begin{defn}
	We call $\pi$ \textdef{almost surely $m$:1} if $\nu(\Jc_m) = 1$.
\end{defn}

A direct consequence of \reflem{injectivitypointsonbothsides} is
\begin{provenlem}
	Let $(X,\alpha,G)$ be uniquely ergodic with unique invariant measure $\mu$.
	Then $\pi$ is almost surely $m$:1 if and only if $\mu(\Ic_m) = 1$.
\end{provenlem}

\begin{lem}[Dichotomy]
	\label{lem:dichotomyalmostsurely11}
	Let $(X,\alpha,G)$ be minimal.
	Then {either}
	\[ \pi \text{ is almost surely $m$:1} \quad{or}\quad \exists \varepsilon > 0: \nu\kl \ml y \in Y \mm \pi_*\diam_{m+1}(y) > \varepsilon \mr \kr > \varepsilon \,.\]
\end{lem}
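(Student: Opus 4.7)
The plan is to prove the contrapositive: assuming $\pi$ is not almost surely $m$:1, I would produce a witness $\varepsilon > 0$ for the second alternative. The argument is essentially a continuity-of-measure exercise once the right measurable sets are identified.

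\textbf{Setup.} First I would observe that $\nu$ is well-defined in this minimal setting: since $(Y,\beta,G)$ is a factor of the minimal system $(X,\alpha,G)$, it is itself minimal, and being equicontinuous as the MEF, \refcor{uniqueergodicityofminimalequicontinuous} gives unique ergodicity. This is the only place minimality of $X$ enters.

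\textbf{Rewriting the hypothesis.} By \reflem{mjecitvityanddiam}, one has the clean identification
\[ \Jc_m = \ml y \in Y \mm \pi_*\diam_{m+1}(y) = 0 \mr \,. \]
So the failure of $\pi$ being almost surely $m$:1 reads $\delta := \nu(B) > 0$, where
\[ B := \ml y \in Y \mm \pi_*\diam_{m+1}(y) > 0 \mr \,. \]

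\textbf{Measurability and continuity of measure.} For each $\varepsilon > 0$, set $A_\varepsilon := \ml y \in Y \mm \pi_*\diam_{m+1}(y) > \varepsilon \mr$. By \reflem{pidiam}, $\pi_*\diam_{m+1}$ is upper semi-continuous, so the super-level set $\ml \pi_*\diam_{m+1} \geqslant c \mr$ is closed for every $c$, and therefore $A_\varepsilon = \bigcup_{n\in\N} \ml \pi_*\diam_{m+1} \geqslant \varepsilon + \frac 1n \mr$ is Borel. The family $(A_\varepsilon)_{\varepsilon > 0}$ is nested with $A_\varepsilon \nearrow B$ as $\varepsilon \searrow 0$, so continuity of measure from below yields $\nu(A_\varepsilon) \to \delta$.

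\textbf{Choosing the witness.} Since $\nu(A_\varepsilon) \to \delta > 0$ as $\varepsilon \to 0$, I can pick $\varepsilon > 0$ small enough that simultaneously $\varepsilon < \delta/2$ and $\nu(A_\varepsilon) > \delta/2$. Then $\nu(A_\varepsilon) > \varepsilon$, which is precisely the required witness. The argument presents no real obstacle; the only non-trivial ingredients are the identification of $\Jc_m$ via the fiber-diameter from \reflem{mjecitvityanddiam} and the Borel measurability inherited from \reflem{pidiam}, both already established in the preceding section.
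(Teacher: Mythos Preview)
Your proof is correct and follows essentially the same route as the paper: both assume $\pi$ is not almost surely $m$:1, invoke \reflem{mjecitvityanddiam} to obtain $\nu\kl\ml \pi_*\diam_{m+1} > 0 \mr\kr > 0$, and then extract the witness $\varepsilon$ by continuity of measure along the nested super-level sets (the paper phrases this as $\sigma$-additivity along $A_{1/n}$ and sets $\varepsilon = \min(1/n,c)$). Your version is a touch more explicit about Borel measurability and about securing the strict inequality $\nu(A_\varepsilon) > \varepsilon$, but the underlying argument is identical.
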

\begin{proof}
	Assume that $\pi$ is not almost surely $m$:1.
	By \reflem{mjecitvityanddiam},
	\[ \nu\kl \ml y \in Y \mm \pi_*\diam_{m+1}(y) > 0 \mr \kr > 0 \,. \]
	By $\sigma$-additivity, there must be some $n \in \N$ such that
	\[ c := \nu\kl \ml y \in Y \mm \pi_*\diam_{m+1}(y) > \frac{1}{n} \mr \kr > 0 \,. \]
	Setting $\varepsilon = \min(\frac{1}{n},c)$ finishes the proof.
\end{proof}

The property \enquote{almost surely finite-to-one} focuses on the measure-theoretic behaviour of $\pi$.
We now turn to a weaker topological condition, called \enquote{almost $m$:1}, which depends on the residual nature of $\Ic_m$.
\subsection{Almost Finite-to-One}
\begin{defn}
	We call $\pi$ \textdef{almost $m$:1} if $\Ic_m$ is residual.
\end{defn}

\begin{prop}
	\label{prop:minimalitygenerically1-1}
	Let $(X,\alpha,G)$ be minimal.
	If $\pi$ has at least one $m$-jectivity point, 
	then $\pi$ is almost $m$:1.
\end{prop}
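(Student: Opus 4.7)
The plan is to show directly that $\Ic_m$ itself is a dense $G_\delta$ subset of $X$, which is enough for residuality.

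The $G_\delta$ property is already in hand: by \reflem{gdeltaofinjectivitypoints}, $\Ic_m = \pi^{-1}(\Jc_m)$ is the intersection of countably many open sets $\pi^{-1}(\pi_*\diam_{m+1}^{-1}([0,\tfrac1n)))$. By assumption, $\Ic_m$ is non-empty, so it contains some $m$-jectivity point $x_0$.

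The second (and essentially only substantive) step is to verify that $\Ic_m$ is $\alpha$-invariant. For any $g \in G$, the factor identity $\pi \circ \alpha(g,\cdot) = \beta(g,\cdot)\circ \pi$ gives
\[
 \pi^{-1}\bigl(\{\pi(\alpha(g,x))\}\bigr) = \pi^{-1}\bigl(\{\beta(g,\pi(x))\}\bigr) = \alpha\bigl(g, \pi^{-1}(\{\pi(x)\})\bigr).
\]
Since $\alpha(g,\cdot)$ is a bijection, both fibres have the same cardinality, so $\alpha(g,x) \in \Ic_m$ whenever $x\in \Ic_m$.

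In particular, the orbit $\alpha(G,x_0)$ is contained in $\Ic_m$. By minimality of $(X,\alpha,G)$ this orbit is dense in $X$, and therefore $\Ic_m$ is dense. A dense $G_\delta$ set is automatically a countable intersection of open dense sets (each open set in the intersection contains the dense subset, hence is itself dense), so $\Ic_m$ is residual, proving that $\pi$ is almost $m$:$1$.

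The only thing to be careful about is the invariance verification; beyond that, the argument is a direct combination of \reflem{gdeltaofinjectivitypoints}, the minimality hypothesis, and the definition of residuality. No obstacle is expected.
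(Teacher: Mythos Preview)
Your argument is correct and follows essentially the same approach as the paper: both establish that $\Ic_m$ is $G_\delta$ via \reflem{gdeltaofinjectivitypoints}, verify invariance of $\Ic_m$ through the identity $\pi^{-1}(\{\pi(\alpha(g,x))\}) = \alpha(g,\pi^{-1}(\{\pi(x)\}))$, and conclude denseness from minimality. Your version is slightly more explicit in justifying why a dense $G_\delta$ set is residual, but there is no substantive difference.
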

\begin{proof}
	Assume that $\Ic_m$ is non-empty.
	Observe that $\Ic_m$ is invariant because
	\[ \pi\inv\kl \pi(\ml \alpha(g,x) \mr ) \kr = \pi\inv\kl \ml \beta(g,\pi( x)) \mr \kr = \alpha \kl g, \pi \inv \ml \pi(x) \mr \kr \,. \]
	By minimality, $\Ic_m$ is dense and
	by \reflem{gdeltaofinjectivitypoints}, it is $G_\delta$.
\end{proof}
\begin{provencor}
	\label{prop:generically1-1}
	Let $(X,\alpha,G)$ be minimal.
	$\Jc_m$ is non-empty
	if and only if
	$\pi$ is almost $m$:1.
\end{provencor}

The following generalises {\cite[Lemma 2.4]{isoext}}.
\begin{lem}[Dichotomy]
	\label{lem:dichotomygenerically11}
	Let $(X,\alpha,G)$ be minimal.
	Then {either}
	\[\pi \text{ is almost $m$:1} \quad{or}\quad \inf_{y\in Y} \pi_*\diam_{m+1}(y) > 0 \,.\]
\end{lem}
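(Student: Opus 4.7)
My plan is to prove the contrapositive: assuming $\pi$ is not almost $m$:1, I produce a uniform positive lower bound on $\pi_*\diam_{m+1}$. By \refprop{minimalitygenerically1-1}, if $\pi$ had any $m$-jectivity point it would be almost $m$:1; hence $\Ic_m = \emptyset$, and \reflem{injectivitypointsonbothsides} then forces $\Jc_m = \emptyset$. Thus, via \reflem{mjecitvityanddiam}, the upper semi-continuous function $f := \pi_*\diam_{m+1}$ (\reflem{pidiam}) is strictly positive everywhere on $Y$.

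Each set $\{f \geq 1/n\}$ is closed, and $Y = \bigcup_{n \in \N}\{f \geq 1/n\}$, so the Baire Category \refthm{baire} delivers an index $n_0 \in \N$ and a non-empty open $V \subseteq Y$ on which $f \geq 1/n_0$. Minimality of $X$ passes to $Y$, so the open, non-empty, $\beta$-invariant set $\bigcup_{g \in G}\beta(g,V)$ equals $Y$; by compactness I extract a finite subcover $Y = \bigcup_{i=1}^{k}\beta(g_i,V)$. For every $y \in Y$ there is an index $i$ with $\beta(g_i\inv,y) \in V$, whence
\[ \diam_{m+1}\bigl(\alpha(g_i\inv,\pi\inv(\{y\}))\bigr) = f(\beta(g_i\inv,y)) \geq \tfrac{1}{n_0}. \]

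The finite family $\{\alpha(g_i\inv,\cdot)\}_{i=1}^{k}$ of homeomorphisms of the compact space $X$ is equicontinuous, so there exists $\omega > 0$ such that $d(x,x') < \omega$ implies $d(\alpha(g_i\inv,x),\alpha(g_i\inv,x')) < 1/n_0$ for every $i$. Since $D_{m+1}$ is continuous on the compact $X^{m+1}$, the supremum defining $\diam_{m+1}$ is attained, providing a tuple $x_1,\ldots,x_{m+1} \in \pi\inv(\{y\})$ with $d(\alpha(g_i\inv,x_j),\alpha(g_i\inv,x_l)) \geq 1/n_0$ for all $j \neq l$. Contrapositively to the choice of $\omega$, this forces $d(x_j,x_l) \geq \omega$ for every pair, so $f(y) = \diam_{m+1}(\pi\inv(\{y\})) \geq \omega$. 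Since $y$ was arbitrary, $\inf_{y \in Y} f(y) \geq \omega > 0$, as required.

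The main obstacle is that $f$ is only upper semi-continuous and therefore need not attain its infimum on $Y$. The remedy is minimality: it converts a Baire-supplied open set of positivity into a uniform global lower bound by translating the bound around $Y$ via a finite equicontinuous family of homeomorphisms of $X$.
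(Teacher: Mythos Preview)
Your argument is correct and establishes the non-trivial direction of the dichotomy; the only thing you omit is the one-line observation that the two alternatives are mutually exclusive (if $\pi$ is almost $m$:1 then some $y_0\in\Jc_m$ makes $\pi_*\diam_{m+1}(y_0)=0$), which the paper states explicitly.

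Your route, however, differs markedly from the paper's. The paper argues in the opposite direction: assuming $\inf_{y}\pi_*\diam_{m+1}(y)=0$, it uses the $\sigma$-compact exhaustion $G=\bigcup_n K_n$ and Arzel\`a--Ascoli to build a sequence $\delta_n\downarrow 0$ with the nesting property $\beta(K_n,A_{n+1})\subseteq A_n$ for the open level sets $A_n=\{\pi_*\diam_{m+1}<\delta_n\}$, and then invokes the uniform convergence of $\kappa_{K_n}\to 0$ (\refcor{uniformconvergencedenseness}) to show each $A_n$ is dense, whence $\Jc_m=\bigcap_n A_n$ is residual. You instead feed the already-proven \refprop{minimalitygenerically1-1} back in to get $f>0$ everywhere, then use Baire to find an open set where $f\geq 1/n_0$, cover $Y$ by finitely many translates via minimality, and pull the bound back through the equicontinuity of a \emph{finite} family of homeomorphisms. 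Your argument is shorter and more elementary---it needs neither the $\sigma$-compact exhaustion nor \refcor{uniformconvergencedenseness}---but it is less self-contained in that it relies on \refprop{minimalitygenerically1-1}, whereas the paper's construction is independent of it and exhibits the residual structure of $\Jc_m$ directly.
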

\begin{proof}
	If $\pi$ is almost $m$:1, 
	then there is an element $y_0 \in \Jc_m$.
	Thus, 
	\[ \inf_{y\in Y}\pi_*\diam_{m+1}(y) = 0 \,. \]
	Hence, both cases of the dichotomy are disjoint.

	Assume that $\inf_{y\in Y}\pi_*\diam_{m+1}(y) = 0$.
	By standing assumption $G$ is $\sigma$-compact.
	Pick an increasing sequence of compact subsets $K_n\subseteq G$ such that $G = \bigcup_{n\in\N} K_n$.
	By Arzelà-Ascoli \refthm{ascoli}, for any $\varepsilon > 0$ there is $\delta > 0$ such that
	\[  d_X(x,x')<\delta \Longrightarrow d_X(\alpha(k,x),\alpha(k,x'))<\varepsilon \]
	for any $n \in \N$ and $k \in K_n$.
	Use this in order to recursively construct a sequence $\kl\delta_n\kr_{n\in\N} \in (0,1)^\N$ with $\lim_{n\to\infty} \delta_n = 0$ such that
	\begin{align}
		\label{eq:deltastep}
		d_X(x,x') < \delta_{n+1} \Longrightarrow d_X\kl \alpha(k,x),\alpha(k,y) \kr < \delta_n
	\end{align}
	for all $n\in \N$, all $x,x'\in X$ as well as all $k \in K_n$.
	Consider the level sets 
	\[ A_n := \ml y \in Y \mm \pi_*\diam_{m+1}(y) < \delta_n \mr \,. \]
	
	We claim that $\beta(k,y) \in A_n$ for any $y \in A_{n+1}$ and $k \in K_n$.
	In order to determine $\diam_{m+1}\kl \pi\inv\kl \ml \beta(k,y) \mr\kr \kr$
	pick $x_1,\ldots,x_{m+1} \in \pi\inv\kl \ml \beta(k,y) \mr\kr$.
	Clearly, $\pi\inv\kl\ml \beta(k,y)\mr\kr =  \alpha\kl k,\pi\inv\kl \ml y\mr \kr\kr$.
	So there must be $x_1',\ldots,x_{m+1}' \in \pi\inv\kl \ml y \mr \kr$
	such that $\alpha(k,x_i') = x_i$ for $i \in \ml 1,\ldots,m+1 \mr$.
	By the assumption that $y \in A_{n+1}$, there is $i \neq j$ such that $d_X(x_i',x_j') < \delta_{n+1}$.
	By choice of $\delta_{n+1}$ we have $d_X(x_i,x_j) < \delta_n$.
	This shows that $\beta(k,A_{n+1}) \subseteq A_n$ for any $k \in K_n$.

	As $\pi_*\diam_{m+1}$ is upper semi-continuous,
	the level sets $A_n$ are open.
	The function $x \mapsto \kappa_{K_n} = \mathrm{denseness}(\alpha(K_n,x))$
	converges uniformly to zero by \refcor{uniformconvergencedenseness}.

	Now let $\varepsilon > 0$.
	Fix $k \in \N$ such that $\kappa_{K_k} < \varepsilon$ uniformly.
	The assumption that $\pi_*\diam_{m+1}$ has no positive infimum implies that the $A_n$ are non-empty.
	As the level sets are mapped into each other,
	$\alpha(K_k,x) \subseteq A_n$ for any $x \in A_{n+k}$.
	As $\kappa_{K_k} < \varepsilon$ we have 
	\[ \denseness(A_n) \leqslant \denseness(\alpha(K_k,x)) < \varepsilon \,. \]
	As $\varepsilon$ was arbitrary, $\denseness(A_n) = 0$, \ie $A_n$ is dense.
    So $\Ic_m = \bigcap_{n\in \N} A_n$ is residual
	and $\pi$ is almost $m$:1.
\end{proof}

\section{Multivariate Frequent Stability}
\label{sec:frequentlystable}
Here, we define and investigate multivariate versions of the notion of frequent stability, originally introduced in \cite{diammeanequicts}.
Thereby arguments from \cite[Section 3]{diammeanequicts} and \cite{frequentlystableisalmostautomorphic} are generalised
of which also inspiration was taken.

The idea of (multivariate) frequent stability is that, \emph{frequently}, the iterate of a sufficiently small ball has a small (multivariate) diameter.
Define 
\[ T_{\delta,\varepsilon}^{(m)}(x):= \ml g \in G\mm \diam_m\kl \alpha(g, B_\delta(x))\kr > \varepsilon \mr \,. \]

Let $\Fc = (F_n)_{n\in\N}$ be a F\o lner sequence and $x \in X$.
\begin{defn}[Frequent $m$-Stability]
	The point $x$ is \textdef{Banach $\Fc$-frequently $m$-stable}
	if for any $\varepsilon > 0$ there is $\delta > 0$ with 
	\[ \BDu_\Fc\kl T_{\delta,\varepsilon}^{(m)}(x) \kr< 1 \,. \]
	The point $x$ is \textdef{weakly $\Fc$-frequently $m$-stable}
	if for any $\varepsilon > 0$ there is $\delta > 0$ with
	\[ \BDl_\Fc\kl T_{\delta,\varepsilon}^{(m)}(x) \kr< 1 \,. \]
\end{defn}

The compactness of $X$ allows us to deduce the uniform version of Banach frequent $m$-stability from the pointwise:
\begin{lem}
	\label{lem:frequentstabilitycompact}
	If every point $x \in X$ is Banach $\Fc$-frequently $m$-stable,
	then for every $\varepsilon > 0$ there is $\delta > 0$ such that
	$\BDu_\Fc(T_{\delta,\varepsilon}^{(m)}(x))< 1$ holds for any $x \in X$.
\end{lem}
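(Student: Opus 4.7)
The plan is to pass from the pointwise statement to the uniform one by a standard \enquote{open cover} compactness argument, using that $T_{\delta,\varepsilon}^{(m)}$ depends monotonically on $\delta$ and has a mild continuity property in $x$.

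The first step is to observe the following set-inclusion. Suppose $d(x,x') < \delta/2$. Then $B_{\delta/2}(x') \subseteq B_{\delta}(x)$, and hence $\alpha(g, B_{\delta/2}(x')) \subseteq \alpha(g, B_{\delta}(x))$ for every $g \in G$. Since $\diam_m$ is monotone with respect to set inclusion, this yields
\[ T_{\delta/2,\varepsilon}^{(m)}(x') \subseteq T_{\delta,\varepsilon}^{(m)}(x)\,, \]
and in particular $\BDu_\Fc(T_{\delta/2,\varepsilon}^{(m)}(x')) \leqslant \BDu_\Fc(T_{\delta,\varepsilon}^{(m)}(x))$.

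Now fix $\varepsilon > 0$. For each $x \in X$, the pointwise hypothesis supplies $\delta_x > 0$ with $c_x := \BDu_\Fc(T_{\delta_x,\varepsilon}^{(m)}(x)) < 1$. The inclusion above shows that on the open ball $B_{\delta_x/2}(x)$, every point $x'$ satisfies $\BDu_\Fc(T_{\delta_x/2,\varepsilon}^{(m)}(x')) \leqslant c_x < 1$. The collection $\{B_{\delta_x/2}(x)\}_{x \in X}$ covers $X$, so by compactness there is a finite subcover indexed by $x_1,\ldots,x_N$. Set $\delta := \tfrac{1}{2}\min_{i \leqslant N} \delta_{x_i}$.

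For any $x' \in X$, there is $i$ with $x' \in B_{\delta_{x_i}/2}(x_i)$. Since $\delta \leqslant \delta_{x_i}/2$, the trivial monotonicity $T_{\delta,\varepsilon}^{(m)}(x') \subseteq T_{\delta_{x_i}/2,\varepsilon}^{(m)}(x')$ combined with the previous paragraph gives
\[ \BDu_\Fc\kl T_{\delta,\varepsilon}^{(m)}(x') \kr \leqslant \BDu_\Fc\kl T_{\delta_{x_i},\varepsilon}^{(m)}(x_i) \kr = c_{x_i} \leqslant \max_{i \leqslant N} c_{x_i} < 1\,, \]
uniformly in $x' \in X$, which is the desired conclusion (with room to spare, as we even get a uniform bound strictly below $1$).

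I do not expect any serious obstacle. The only point requiring care is the monotonicity-in-$x$ inclusion $T_{\delta/2,\varepsilon}^{(m)}(x') \subseteq T_{\delta,\varepsilon}^{(m)}(x)$; everything else is a routine finite-cover argument that does not interact with the specific structure of $\diam_m$ beyond its monotonicity under set inclusion.
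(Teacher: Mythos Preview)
Your proof is correct and rests on the same key observation as the paper's: if $d(x,x')$ is small then $B_{\delta'}(x') \subseteq B_\delta(x)$ for suitably smaller $\delta'$, which transfers the density bound from $x$ to nearby points. The only difference is packaging: the paper defines $\rho(x) := \sup\{\delta > 0 : \BDu_\Fc(T_{\delta,\varepsilon}^{(m)}(x)) < 1\}$, shows it is lower semi-continuous via that same inclusion, and concludes that $\rho$ attains a positive minimum on the compact $X$; you instead run an explicit finite-subcover argument, which is the same compactness principle in a more hands-on form.
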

\begin{proof}
	Let $\varepsilon > 0$.
	For $x \in X$ let $\rho(x) := \sup D_x$, where $D_x$ is the non-empty set
	\[ D_x := \ml \delta_x > 0 \mm \BDu_\Fc\kl T_{\delta_x,\varepsilon}^{(m)}(x) \kr< 1 \mr \,.\]
	To demonstrate that $\rho: X \to \R^+$ has a minimum,
	it suffices to prove that $\rho$ is lower semi-continuous.
	Let $x_0 \in X$, $\delta' \in D_{x_0}$,	$0 < \eta < \delta'$ and $x \in B_\eta(x_0)$.
	Then $B_{\delta'-\eta}(x) \subseteq B_{\delta'}(x_0)$.
	In particular,
	$\diam_{m}\kl \alpha\kl g,B_{\delta'-\eta}(x) \kr \kr \leqslant \diam_{m}\kl \alpha\kl g,B_{\delta'}(x_0) \kr \kr$
	for any $g \in G$.
	Therefore, $\delta'-\eta \in D_x$.
	By taking the supremum over $\delta'$, conclude that $\rho(x) \geqslant \rho(x_0) - \eta$.
	So $B_\eta(x_0) \subseteq \rho\inv\kl (\rho(x_0)-\eta, \infty) \kr$.
\end{proof}

Recall that
\[ J = \ml K \in \Kc(X) \mm \tint(K) \neq \emptyset \mr \]
is the family of compact subsets with non-empty interior.
\begin{prop}
	\label{prop:notalmostm1impliesnotbanachfrequentlystable}
	Let $(X,\alpha,G)$ be minimal and let $(Y,\beta,G)$ be the MEF with factor map $\pi: (X,\alpha,G) \to (Y,\beta,G)$
	and let $\Fc = (F_n)_{n\in\N}$ be any F\o lner sequence.

	If $\pi$ is not almost $m$:1
	then there is $\varepsilon > 0$ such that for any $B \in J$ there is $(a_n)_{n\in\N} \in G^\N$ such that the shifted F\o lner sequence $\Fc_B := (F_{B,n})_{n\in\N}$, where $F_{B,n} = a_n F_n$,
	satisfies 
	\[ \limsup_{n\to \infty} \frac{m\kl \ml g \in a_nF_n \mm \diam_{m+1}\kl \alpha(g,B) \kr > \varepsilon \mr \kr}{m(a_nF_n)} = 1 \,. \]
\end{prop}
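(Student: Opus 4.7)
The plan is to transfer the problem to the Hausdorff hyperspace $(\Kc(X), \hat\alpha, G)$ and to construct an $\hat\alpha$-invariant measure supported on an open set $V$ of \emph{large}-$\diam_{m+1}$-sets, which is then realised as a weak-$*$-cluster point of empirical averages of the $\hat\alpha$-orbit of $B$ after suitable shifts.

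First, Lemma \ref{lem:dichotomygenerically11} applied to the assumption yields $c := \inf_{y \in Y} \pi_*\diam_{m+1}(y) > 0$; set $\varepsilon := c/4$. Picking $y_0 \in Y$ and a tuple $(p_1,\ldots,p_{m+1}) \in \pi\inv(\ml y_0 \mr)^{m+1}$ with $D_{m+1}(p_1,\ldots,p_{m+1}) > c/2$, Theorem \ref{thm:multivariateregionallyproximal} places $(p_1,\ldots,p_{m+1}) \in \Qc_{m+1}$. The open set
\[ V := \ml K \in \Kc(X) \mm K \cap B_{c/16}(p_i) \neq \emptyset \text{ for each } i \mr \]
satisfies $V \subseteq \ml K \in \Kc(X) \mm \diam_{m+1}(K) > \varepsilon \mr$ by the triangle inequality and is upwards-closed under set inclusion. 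By Lemma \ref{lem:sensitivitycharacterizationofregionallyproximal} applied with $U := \tint(B)$, the $\hat\alpha$-orbit of $B$ in $\Kc(X)$ enters $V$.

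For any $(a_n) \in G^{\N}$ the probability measures $\mu_n := \frac{1}{m(a_nF_n)} \int_{a_nF_n} \delta_{\hat\alpha(g,B)} \dd m(g)$ on $\Kc(X)$ have weak-$*$-cluster points that are $\hat\alpha$-invariant by Theorem \ref{thm:krylovbogolyubov} (abelianness of $G$ keeps $(a_nF_n)$ F\o lner) and supported on $J$ by Proposition \ref{prop:supportofgeneratedmeasure}. If $(a_n)$ can be chosen so that some cluster measure $M$ satisfies $M(V) = 1$, the Portmanteau theorem applied to the open set $V$ yields
\[ \liminf_{k\to\infty} \frac{m\kl \ml g \in a_{n_k}F_{n_k} \mm \hat\alpha(g,B) \in V \mr \kr}{m(a_{n_k}F_{n_k})} \geqslant M(V) = 1 \]
along the realising subsequence $n_k$, and since $V \subseteq \ml K \mm \diam_{m+1}(K) > \varepsilon \mr$ the required $\limsup = 1$ follows.

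Producing shifts $(a_n)$ with $M(V) = 1$ is the main obstacle. My plan is a diagonal realisation: Corollary \ref{cor:existenceofergodiccomponentsupportedonJ} extracts an ergodic $\hat\alpha$-invariant $M_e$ on $J$; Lemma \ref{lem:decompositionofJ} distributes the $M_e$-mass across a countable family of \enquote{containment sets} $A_{B'} := \ml K \mm B' \subseteq K \mr$ with $B' \in J$; Lemma \ref{lem:sensitivitycharacterizationofregionallyproximal} provides for each such $B'$ a shift $h_{B'} \in G$ with $\hat\alpha(h_{B'}, B') \in V$, hence $\hat\alpha(h_{B'}, K) \in V$ for every $K \in A_{B'}$ by the upwards-closedness of $V$; Lindenstrauss' Theorem \ref{thm:lindenstrauss} applied to an $M_e$-generic $K^*$ yields density-$1$ visits of the suitably shifted orbit of $K^*$ into $V$; and finally a diagonal approximation $\alpha(h_n, B) \to K^*$ in $\Kc(X)$ (justified since $K^* \in \overline{\hat\alpha(G,B)}$ together with the continuity of the averaging operation in its starting point) transfers the F\o lner averaging from $K^*$ back to $B$ via $(a_n) := (h_n)$. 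The technical crux is that the shifts $h_{B'}$ depend on which $B'$ is contained in $\hat\alpha(g, K^*)$, so the shifted good-time set is a \emph{union} of translates of density-$p_{B'}$ subsets rather than a single shifted F\o lner window; I expect this to be resolved by finitely truncating the countable family from Lemma \ref{lem:decompositionofJ} and exploiting the F\o lner property of $(F_n)$ on $G$ to produce, after a further shift, a single window along which the density of visits to $V$ approaches $1$.
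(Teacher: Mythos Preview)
Your hyperspace/measure-theoretic strategy has a genuine gap at precisely the point you flag as the ``main obstacle'': you never establish that an $\hat\alpha$-invariant measure with $M(V)=1$ exists, let alone one arising as a cluster point of shifted empirical averages of $B$. The $h_{B'}$-trick together with upwards-closedness of $V$ gives only $M_e\kl \hat\alpha(h_{B'}^{-1},V)\kr \geqslant M_e(A_{B'}) > 0$, hence by invariance $M_e(V)>0$; it does not give $M_e(V)=1$, and the countable union over $B'$ does not help since the required shifts $h_{B'}$ differ. The final ``diagonal transfer'' from an $M_e$-generic $K^*$ back to $B$ is also unjustified: that $\hat\alpha(h_n,B)\to K^*$ in $\Kc(X)$ says nothing about the F\o lner averages along $h_nF_n$, because the averaging operation is not continuous in the starting point uniformly over the whole window $F_n$.

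The paper's argument avoids all of this and is completely elementary---no hyperspace measures, no ergodic theorem, no Portmanteau. The key idea you are missing is to work \emph{one $n$ at a time} and exploit uniform continuity on the compact set $F_n$: choose $\eta_n>0$ so that $\eta_n$-close points stay $\varepsilon$-close under $\alpha(g,\cdot)$ for every $g\in F_n$; take a finite $\eta_n$-net $\{x_1,\ldots,x_k\}$ of a \emph{single} fibre $\pi^{-1}(\{y_*\})$ (with $k$ depending on $n$, possibly much larger than $m+1$); invoke \refthm{multivariateregionallyproximal} to place $(x_1,\ldots,x_k)\in\Qc_k$; and then apply \reflem{sensitivitycharacterizationofregionallyproximal} once to find $a_n\in G$ and $z_1,\ldots,z_k\in B$ with $d(x_i,\alpha(a_n,z_i))<\eta_n$. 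Equicontinuity on $F_n$ propagates this approximation to all of $a_nF_n$, so $B_{2\varepsilon}(\alpha(a_ng,\{z_1,\ldots,z_k\}))$ contains the full fibre $\pi^{-1}(\{\beta(a_ng,y_*)\})$ for \emph{every} $g\in F_n$, and \reflem{densenessanddiam} then gives $\diam_{m+1}(\alpha(f,B))>\varepsilon$ for \emph{every} $f\in a_nF_n$. Thus the density along each shifted window is identically $1$, not merely close to $1$. The hyperspace machinery you are reaching for is what the paper uses in the \emph{converse} direction of \refthm{mainthmfreqstable}; for this proposition it is unnecessary.
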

\begin{proof}
	This argument follows and generalises the proof of \cite[Theorem 3.1]{frequentlystableisalmostautomorphic}.

	Let $B \in J$ be arbitrary.
	By \reflem{dichotomygenerically11}, there is $\varepsilon > 0$ such that
	\[ \diam_{m+1} \kl \pi\inv\kl \ml y \mr \kr \kr > 5\varepsilon \]
	for all $y \in Y$.
	Now let $n \in \N$ be arbitrary.
	Pick any $y_* \in Y$.
	There is $\eta_n > 0$ such that
	\[ d(x,x') < \eta_n \Longrightarrow d(\alpha(g,x),\alpha(g,x')) < \varepsilon \]
	for any $g \in F_n$ and $x,x' \in X$.
	Furthermore, there is a finite set
	\[ \ml x_1, \ldots, x_k \mr \subseteq \pi\inv(\ml y_* \mr) \]
	such that $B_{\eta_n}(\ml x_1,\ldots,x_k \mr) \supseteq\pi\inv(\ml y_* \mr)$.
	Note that this implies in particular that 
	\[ B_{\varepsilon}(\alpha(g,\ml x_1,\ldots,x_k \mr)) \supseteq \pi\inv(\beta(g,\ml y_* \mr))\]
	for any $g \in F_n$.

	By \refthm{multivariateregionallyproximal}, 
	$(x_1,\ldots,x_{k}) \in \Qc_k$.
	\reflem{sensitivitycharacterizationofregionallyproximal} implies that for $n \in \N$ there is $a_n \in G$ and $z_1,\ldots,z_k \in B$ such that $d(x_i,\alpha(a_{n},z_{i})) < \eta_n$ for any $i \in \ml 1,\ldots,k\mr$.
	Therefore, $d(\alpha(g,x_i),\alpha(a_n g, z_{i})) < \varepsilon$
	for any $g \in F_n$ and $i \in \ml 1,\ldots,k\mr$.
	Thus
	\[ B_{2\varepsilon}\kl \alpha\kl a_n g, \ml z_{1} \ldots z_{k}  \mr \kr  \kr\supseteq \pi\inv\kl \ml \beta(a_n g,y_*) \mr \kr \,.\]
	By \reflem{densenessanddiam}, we conclude $\diam_{m+1}\kl \alpha(a_n g, \ml z_{1},\ldots,z_{k} \mr )\kr > \varepsilon$ for any $g \in F_n$.

	Define a shifted F\o lner sequence $\tilde \Fc_B = (\tilde F_n)_{n \in \N}$ with $\tilde F_n = a_nF_n$.
	Then
	\begin{align*}
		\diam_{m+1}\kl \alpha(f,B) \kr &\geqslant \diam_{m+1}\kl \alpha(f, \ml z_{1},\ldots,z_{k} \mr \kr > \varepsilon
	\end{align*}
	for any $n \in \N$ and $f \in \tilde F_n$ and so immediatly
	\[ \limsup_{n\to \infty} \frac{m\kl \ml g \in a_nF_n \mm \diam_{m+1}\kl \alpha(g,B) \kr > \varepsilon \mr \kr}{m(a_nF_n)} = 1 \,. \qedhere \]
\end{proof}
\begin{cor}
	Setting $B := \overline{B_{\frac{\delta}{2}}(x)}$ in \refprop{notalmostm1impliesnotbanachfrequentlystable} yields that $\BDu_\Fc\kl T^{(m)}_{\delta,\varepsilon}(x) \kr = 1$ for any $\delta > 0$ and $x \in X$.
	So if $\pi$ is not almost $m$:$1$, then no $x \in X$ can be Banach $\Fc$-frequently $(m+1)$-stable.
\end{cor}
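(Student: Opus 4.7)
The plan is to apply \refprop{notalmostm1impliesnotbanachfrequentlystable} with the specific choice $B = \overline{B_{\delta/2}(x)}$ and then translate the density statement it provides into one about $T^{(m+1)}_{\delta,\varepsilon}(x)$.

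First I would check that $B = \overline{B_{\delta/2}(x)}$ lies in $J$. It is compact as a closed subset of the compact space $X$, and its interior is non-empty since it contains the open ball $B_{\delta/2}(x)$. Hence the hypotheses of the proposition are met and it produces an $\varepsilon > 0$ (depending only on the failure of almost $m$:1) together with a sequence $(a_n)_{n\in\N}$ such that
$$
\limsup_{n\to\infty} \frac{m\bigl(\{g \in a_nF_n \mid \diam_{m+1}(\alpha(g,B)) > \varepsilon\}\bigr)}{m(a_nF_n)} = 1\,.
$$

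Next I would exploit the inclusion $B = \overline{B_{\delta/2}(x)} \subseteq B_\delta(x)$, which holds because every point of the closure has distance at most $\delta/2$ from $x$ and thus strictly less than $\delta$. By monotonicity of $\alpha(g,\cdot)$ and of $\diam_{m+1}$ this yields $\{g \in G \mid \diam_{m+1}(\alpha(g,B)) > \varepsilon\} \subseteq T^{(m+1)}_{\delta,\varepsilon}(x)$. Using that the definition of $\BDu_\Fc$ is a supremum over all $G$-translates of $F_n$, and in particular dominates the limit superior along the specific translates $(a_nF_n)_{n\in\N}$, I would conclude
$$
\BDu_\Fc\bigl(T^{(m+1)}_{\delta,\varepsilon}(x)\bigr) \geqslant \limsup_{n\to\infty} \frac{m\bigl(T^{(m+1)}_{\delta,\varepsilon}(x) \cap a_nF_n\bigr)}{m(a_nF_n)} = 1\,,
$$
and equality follows because upper Banach densities do not exceed $1$.

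For the second assertion, I would contrapose the definition of Banach $\Fc$-frequent $(m+1)$-stability: if some $x \in X$ were Banach $\Fc$-frequently $(m+1)$-stable, then for the $\varepsilon$ produced by the proposition there would exist some $\delta > 0$ with $\BDu_\Fc(T^{(m+1)}_{\delta,\varepsilon}(x)) < 1$, contradicting the preceding display. I do not anticipate any real obstacle here: the corollary is essentially a repackaging of the proposition through the trivial inclusion $\overline{B_{\delta/2}(x)} \subseteq B_\delta(x)$ combined with the supremum appearing in the definition of the upper Banach density.
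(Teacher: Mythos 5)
Your proposal is correct and follows exactly the route the paper intends (the paper leaves this corollary without an explicit proof, treating it as immediate from \refprop{notalmostm1impliesnotbanachfrequentlystable}): verify $\overline{B_{\delta/2}(x)} \in J$, use the inclusion into $B_\delta(x)$ and monotonicity of $\diam_{m+1}$, and note that the upper Banach density dominates the limit superior along the shifted sequence $(a_nF_n)_{n\in\N}$. You also correctly work with $T^{(m+1)}_{\delta,\varepsilon}(x)$ where the corollary as printed writes $T^{(m)}_{\delta,\varepsilon}(x)$, which is evidently a typographical slip since the proposition controls $\diam_{m+1}$.
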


The following statement, which is the main result of this section,
generalises \cite[Theorem 3.1]{frequentlystableisalmostautomorphic} to the multivariate case.
\begin{thm}
	\label{thm:mainthmfreqstable}
	Let $(X,\alpha,G)$ be minimal and let $(Y,\beta,G)$ be the MEF with factor map $\pi: (X,\alpha,G) \to (Y,\beta,G)$.
	The following are equivalent:
	\begin{enumerate}[(i)]
		\item \label{item:almostm1}
			$\pi$ is almost $m$:1;
		\item \label{item:strongfrequentmstability}
			Every $x\in X$ is Banach $\Fc$-frequently $(m+1)$-stable, for any F\o lner sequence $\Fc$;
		\item \label{item:weakfrequentmstability}
			There exists some F\o lner sequence $\Fc$ and a point $x \in X$ such that
			$x$ is weakly $\Fc$-frequently $(m+1)$-stable.
	\end{enumerate}
			
\end{thm}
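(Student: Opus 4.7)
The plan is to establish the cycle $(\mathrm{i}) \Rightarrow (\mathrm{ii}) \Rightarrow (\mathrm{iii}) \Rightarrow (\mathrm{i})$. The trivial step is $(\mathrm{ii}) \Rightarrow (\mathrm{iii})$, which follows from $\BDl_\Fc \leqslant \BDu_\Fc$ together with specialising both universal quantifiers to existential ones.

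For $(\mathrm{i}) \Rightarrow (\mathrm{ii})$, I would fix an $m$-jectivity point $x_0 \in \Ic_m$, which exists by \refprop{minimalitygenerically1-1}. Given $\varepsilon > 0$, \reflem{diampreimageofball} furnishes $\eta > 0$ such that $\diam_{m+1}\kl \pi\inv\kl B_\eta(y)\kr \kr < \varepsilon$ for every $y \in B_\eta(\pi(x_0))$. Equicontinuity of the MEF (\refthm{classification}) combined with continuity of $\pi$ allows a single $\delta > 0$ with $\alpha(g, B_\delta(x)) \subseteq \pi\inv\kl B_\eta(\pi(\alpha(g,x)))\kr$ for all $g \in G$ and $x \in X$, so that $S(x) := \ml g \in G \mm \pi(\alpha(g,x)) \in B_\eta(\pi(x_0))\mr$ sits inside the complement of $T^{(m+1)}_{\delta,\varepsilon}(x)$. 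By \refcor{uniqueergodicityofminimalequicontinuous} the MEF is uniquely ergodic with a Haar measure $\nu$ of full support, and inserting a Urysohn function $f$ with $\1_{B_{\eta/2}(\pi(x_0))} \leqslant f \leqslant \1_{B_\eta(\pi(x_0))}$ into \refcor{uniformergodicandbanach} produces $\BDl_\Fc(S(x)) \geqslant \nu(B_{\eta/2}(\pi(x_0))) > 0$ uniformly in $x$ and $\Fc$; hence $\BDu_\Fc(T^{(m+1)}_{\delta,\varepsilon}(x)) < 1$ for every $x$ and $\Fc$.

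The substantive direction is $(\mathrm{iii}) \Rightarrow (\mathrm{i})$, which I would argue contrapositively. Assume $\pi$ is not almost $m$:1; \reflem{dichotomygenerically11} supplies a uniform $\varepsilon > 0$ with $\inf_y \pi_*\diam_{m+1}(y) > 5\varepsilon$, enabling \refprop{notalmostm1impliesnotbanachfrequentlystable}. Fix any $\Fc$, any $x \in X$ and any $\delta > 0$; it suffices to show $\BDu_\Fc(A_0) = 0$ for $A_0 := \ml g \mm \diam_{m+1}(\alpha(g,B_\delta(x))) \leqslant \varepsilon\mr$. Suppose $\BDu_\Fc(A_0) > 0$. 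Feeding the witnessing shifted subsequence into \refthm{krylovbogolyubov} applied to the orbit averages of $\delta_{\overline{B_\delta(x)}}$ under $\hat\alpha$ produces a $\hat\alpha$-invariant probability measure $M$ on $\Kc(X)$ with $M(A) > 0$, where $A := \ml K \in \Kc(X) \mm \diam_{m+1}(K) \leqslant \varepsilon\mr$ is closed by \reflem{continuityofdiam}. By \refprop{supportofgeneratedmeasure} $M$ is concentrated on $J$, and ergodic decomposition (\refthm{ergodicdecomp}, \refcor{existenceofergodiccomponentsupportedonJ}) yields an ergodic $M_e$ with $M_e(J) = 1$ and $M_e(A) > 0$. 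Applying \reflem{decompositionofJ} to the renormalised restriction $M_e|_A$ gives some $B' \in J$ with $M_e(V) > 0$ for $V := A \cap \ml K \in J \mm B' \subseteq K\mr$. Along a tempered F\o lner $\Gc = (G_n)$, \refthm{lindenstrauss} in $(\Kc(X), \hat\alpha, G, M_e)$ furnishes $K_0 \in J$ whose $\hat\alpha$-orbit visits $V$ with $\Gc$-density equal to $M_e(V) > 0$.

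The crucial and hardest step is closing the contradiction. For every $g$ in the positive-density set $H := \ml g \mm \hat\alpha(g,K_0) \in V\mr$ one has $B' \subseteq \alpha(g,K_0)$ and $\diam_{m+1}(\alpha(g,K_0)) \leqslant \varepsilon$. Applying \refprop{notalmostm1impliesnotbanachfrequentlystable} to $B' \in J$ along $\Gc$ yields shifts $(c_n)$ with $c_n G_n \subseteq \ml h \mm \diam_{m+1}(\alpha(h,B')) > \varepsilon\mr$. For $h \in c_n G_n$ and $g \in H$, the inclusion $\alpha(h,B') \subseteq \alpha(hg,K_0)$ combined with monotonicity of $\diam_{m+1}$ forces $\diam_{m+1}(\alpha(hg,K_0)) > \varepsilon$, whence $hg \notin H$. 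Consequently $c_n G_n$ is disjoint from the abelian difference set $HH\inv$ for every $n$. Since $H$ has positive upper Banach density, a Bogolyubov--F\o lner type syndeticity result implies that $HH\inv$ is syndetic in $G$ and therefore intersects every sufficiently large F\o lner element, contradicting the disjointness. The main obstacle I anticipate is clarifying this syndeticity step in the non-discrete $\sigma$-compact abelian LCA setting with the proper handling of the F\o lner scales on $c_n G_n$.
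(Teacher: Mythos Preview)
Your arguments for $(\mathrm{i})\Rightarrow(\mathrm{ii})$ and $(\mathrm{ii})\Rightarrow(\mathrm{iii})$ match the paper's. For $(\mathrm{iii})\Rightarrow(\mathrm{i})$, your setup also agrees with the paper up through the Krylov--Bogolyubov construction of the $\hat\alpha$-invariant measure $M$ on $\Kc(X)$, the passage to an ergodic component $M_e$ supported on $J$ with $M_e(A)>0$, and the extraction of $B'\in J$ with $M_e(\{K:B'\subseteq K\})>0$. (The paper does not intersect with $A$ when applying \reflem{decompositionofJ}; your refinement to $V=A\cap\{K:B'\subseteq K\}$ is harmless but unnecessary.) The divergence is in how you close the contradiction, and this is precisely where you introduce an avoidable obstacle.

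The paper's route is more direct and sidesteps the syndeticity issue you flag. Rather than running Lindenstrauss along a generic tempered F\o lner $\Gc$ to manufacture a point $K_0$, a return set $H$, and then arguing combinatorially with $HH^{-1}$, the paper applies \refthm{lindenstrauss} \emph{along a tempered subsequence of the shifted F\o lner $(c_nG_n)$ itself}, i.e.\ the sequence produced by \refprop{notalmostm1impliesnotbanachfrequentlystable} for $B'$. Since $G$ is abelian, $(c_nG_n)$ is again a F\o lner sequence and admits a tempered subsequence; for $M_e$-almost every $K$ the ergodic averages of $\1_{A^c}$ along it converge to $M_e(A^c)$. Now choose such a generic $K$ with $K\supseteq B'$, which is possible because $M_e(\{K:B'\subseteq K\})>0$. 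For every $g\in c_nG_n$ one has $\diam_{m+1}(\hat\alpha(g,K))\geqslant\diam_{m+1}(\alpha(g,B'))>\varepsilon$, so these averages are identically $1$, whence $M_e(A^c)=1$, contradicting $M_e(A)>0$. No difference-set argument is needed, and $K_0$ never enters.

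Your detour via $HH^{-1}$ can in fact be completed---positive upper Banach density of $H$ does force $HH^{-1}$ to be syndetic in an abelian LCA group, and a syndetic set has positive lower Banach density along every F\o lner sequence, which is incompatible with $c_nG_n\subseteq(HH^{-1})^c$ for all $n$. But this imports a Bogolyubov--F\o lner lemma external to the paper, and the whole construction of $K_0$ and $H$ becomes superfluous once you notice that the shifted F\o lner from the Proposition can be fed directly into Lindenstrauss.
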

\begin{proof}
	The metric $d_Y$ on $Y$ can be assumed to be $\beta$-invariant.
	The unique invariant measure on $(Y,\beta,G)$ will be denoted by $\nu$.
	\begin{description}
		\item[\refi{almostm1} implies \refi{strongfrequentmstability}]
			Let $\varepsilon > 0$, $x \in X$ and $\Fc$ an arbitrary F\o lner sequence.

			By assumption, there is $y_0 \in Y$ such that $\card\kl \pi\inv\kl\ml y_0 \mr\kr \kr \leqslant m$.
			\reflem{diampreimageofball} implies that there is $\eta > 0$ such that
			\begin{align}
				\label{eq:diameterofpreimages}
				\diam_{m+1}\kl \pi\inv\kl B_\eta(y) \kr \kr < \varepsilon
			\end{align}
			for any $y \in B_\eta(y_0)$.
			As $\pi$ is continuous,
			there is $\delta > 0$ such that
			$\pi\kl B_\delta(x) \kr \subseteq B_\eta(\pi(x))$.
			The invariance of $d_Y$ implies
			$B_\eta\kl \beta(g,\pi(x)) \kr = \beta\kl g, B_\eta(\pi(x)) \kr$
			for any $g\in G$.
			Furthermore, the equivariance of $\pi$ yields
			\begin{align*}
				\pi\kl \alpha\kl g,B_\delta(x) \kr \kr &= \beta\kl g, \pi\kl B_\delta(x) \kr \kr \\
				&\subseteq \beta\kl g, B_\eta(\pi(x)) \kr \\
				&= B_\eta\kl \beta(g,\pi(x)) \kr
			\end{align*}
			for all $g \in G$.
			Taking the preimage, we further obtain
			\begin{align}
				\label{eq:subsetsfreqstable}
				\alpha\kl g,B_\delta(x) \kr \subseteq \pi\inv\kl B_\eta\kl \beta( g, \pi(x) ) \kr \kr \,.
			\end{align}
			Combining \refe{eq:subsetsfreqstable} and \refe{eq:diameterofpreimages},
			we see that
			$\diam_{m+1}\kl \alpha\kl g,B_\delta(x) \kr \kr < \varepsilon$, whenever $\beta(g,\pi(x)) \in B_\eta(y_0)$.
			In other words, we have
			\[ H := \ml g\in G \mm \beta(g,\pi(x)) \in B_\eta(y_0) \mr \subseteq \ml g \in G \mm   \diam_{m+1}\kl \alpha\kl g,B_\delta(x) \kr \kr < \varepsilon \mr \,. \]
			A straightforward application of \refcor{uniformergodicandbanach} shows that $H$ has a positive lower Banach $\Fc$-density.

		\item[\refi{strongfrequentmstability} implies \refi{weakfrequentmstability}]
			This is immediate.

		\item[\refi{weakfrequentmstability} implies \refi{almostm1}]
			We proceed by contradiction.
			Fix any F\o lner sequence $\Fc = (F_n)_{n\in\N}$.
			So assume that $\pi$ is not almost $m$:1 and that $x \in X$ is weakly $\Fc$-frequently $(m+1)$-stable.
			Let $\varepsilon > 0$ be according to \refprop{notalmostm1impliesnotbanachfrequentlystable}.

			By \reflem{continuityofdiam}, $\diam_{m+1}$ is continuous, so
			\[ \Omega := \diam_{m+1}\inv\kl \il 0,\varepsilon \kr \kr = \ml K \in \Kc(X) \mm \diam_{m+1}(K) < \varepsilon \mr \]
			is open. 
			Recall that the action of $\alpha$ on $X$ induces an action $\hat\alpha$ on $\Kc(X)$ by
			\[ \hat\alpha(g,K) = \ml \alpha(g,k) \mm k \in K \mr \,. \]
			By assumption on $x$, there is $\delta > 0$ and $(b_n)_{n\in\N} \in G^\N$ such that
			\begin{align}
				\label{eq:contradictionassumptionfrequentstability}
				\lim_{n\to\infty} \frac{\kl \ml g \in b_nF_n \mm \diam_{m+1}(\alpha(g,B_\delta(x))) < \varepsilon \mr \kr}{m(b_nF_n)} > 0 \,.
			\end{align}
			Let $B := \overline{B_{\frac{\delta}{2}}(x)}\subseteq B_\delta(x)$.
			Krylov-Bogolyubov \refthm{krylovbogolyubov} implies that, by passing to a subsequence, there is an $\hat\alpha$-invariant measure $M$ such that
			\[ \lim_{n \to \infty} \frac{1}{m(b_n F_{n})} \int_{b_n F_{n}} \delta_{\hat\alpha(g,B)} \dd m(g) = M \,. \]
			By the Ergodic Decomposition \refthm{ergodicdecomp}, there is a measure $E$ on the set of ergodic measures such that $M = \int M_e \dd E(M_e)$.
			Equation \refe{eq:contradictionassumptionfrequentstability} implies $M(\Omega) > 0$.
			So 
			\[ E\kl \ml M_e \text{ ergodic } \mm M_e(\Omega) > 0 \mr \kr > 0 \,. \]
			Note that, by \refcor{existenceofergodiccomponentsupportedonJ} and \reflem{decompositionofJ}, there is an ergodic measure $M^*_e$ and $B_* \in J$ such that $M^*_e( \ml K \in J \mm B_* \subseteq K \mr) > 0$ 
			and
			\begin{align}
				\label{eq:contradictionstepfrequentstability}
				M^*_e(\Omega) > 0 \,.
			\end{align}
			Let $\Fc_{B_*}$ be the shifted F\o lner sequence given by \refprop{notalmostm1impliesnotbanachfrequentlystable}.
			The Lindenstrauss Ergodic \refthm{lindenstrauss} applied to a tempered subsequence of $\Fc_{B_*}$
			and $\1_{\Omega^c}$
			implies that
			\[ \frac{1}{m(F_{B_*,n})} \int_{F_{B_*,n}} \1_{\Omega^c}\kl \hat\alpha(g,K) \kr \dd m(g) \xlongrightarrow{n\to\infty} M_e^*(\Omega^c) \]
			for almost all $K \in \Kc(X)$.
			As $M^*_e( \ml K \in J \mm B_* \subseteq K \mr) > 0$, we can assume that $K \supseteq B_*$ and this implies
			\begin{align*}
				\label{eq:contradictionfrequentstability}
				M^*_e(\Omega^c)=1 \,.
			\end{align*}
			This yields a contradiction to \refe{eq:contradictionstepfrequentstability}.
			\qedhere
	\end{description}
\end{proof}

\section{Multivariate Diam-Mean Equicontinuity}
\label{sec:diammean}
In \cite{diammeanequicts}, the definition of (univariate) diam-mean equicontinuity was given.
A generalisation to the multivariate case is presented here.
Thereby arguments from \cite[Section 4]{diammeanequicts}
and \cite{frequentlystableisalmostautomorphic} are followed and generalised
of which also inspiration was taken.

The idea of (multivariate) diam-mean equicontinuity is that \emph{almost all} iterates of a sufficiently small ball have a small (multivariate) diameter.
Recall the definition
\[ T_{\delta,\varepsilon}^{(m)}(x) = \ml g \in G\mm \diam_m\kl \alpha(g, B_\delta(x))\kr > \varepsilon \mr \,. \]

The proofs in this section are very analogous to those in Section \ref{sec:frequentlystable}
and, for the ease of the reader written as similar as possible.

Let $\Fc = (F_n)_{n\in\N}$ be a F\o lner sequence and $x \in X$.
\begin{defn}[Diam-Mean $m$-Equicontinuity]
	The point $x$ is \textdef{Banach $\Fc$-diam-mean $m$-equicontinuous} 
	if for all $\varepsilon > 0$ there is $\delta > 0$ such that
	\[ \BDu_\Fc(T_{\delta,\varepsilon}^{(m)}(x))< \varepsilon \]
	The point $x$ is \textdef{weakly $\Fc$-diam-mean $m$-equicontinuous}
	if for all $\varepsilon > 0$ there is $\delta > 0$ such that
	\[ \BDl_\Fc(T_{\delta,\varepsilon}^{(m)}(x))< \varepsilon \]
\end{defn}

\begin{lem}
	A point $x \in X$ is Banach $\Fc$-diam-mean $m$-equicontinuous 
	if and only if
	for all $\varepsilon > 0$ there is $\delta > 0$ such that
	\[ A_\delta(x) := \limsup_{n\to\infty} \sup_{h\in G} \frac{1}{m(hF_n)}\int_{hF_n} \diam_{m}\kl \alpha(g, B_\delta(x) \kr \dd m(g) < \varepsilon \,.\]
\end{lem}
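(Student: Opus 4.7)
The plan is to establish the equivalence via Chebyshev-type estimates, exploiting the boundedness of the multivariate diameter. Set $M := \diam_m(X) < \infty$ and, for brevity, write $f_\delta(g) := \diam_m(\alpha(g, B_\delta(x)))$. We first note that $f_\delta$ is continuous (hence integrable) in $g$: using \reflem{estimatefordm} one has $\diam_m(A) = \diam_m(\overline A)$ for every $A \subseteq X$, so $f_\delta(g) = \diam_m(\alpha(g, \overline{B_\delta(x)}))$; the map $g \mapsto \hat\alpha(g, \overline{B_\delta(x)}) \in (\Kc(X), d_\Hc)$ is continuous by the Arzelà-Ascoli argument carried out in \refprop{continuitydenseness}, and combined with \reflem{continuityofdiam} this yields the claim. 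In particular $0 \leqslant f_\delta \leqslant M$, which is what drives both directions of the proof.

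For the direction from Banach $\Fc$-diam-mean $m$-equicontinuity to the integral condition, let $\varepsilon > 0$ and set $\varepsilon' := \frac{\varepsilon}{2(M+1)}$. By hypothesis, choose $\delta > 0$ with $\BDu_\Fc(T_{\delta,\varepsilon'}^{(m)}(x)) < \varepsilon'$. Bounding $f_\delta$ by $M$ on $T_{\delta,\varepsilon'}^{(m)}(x)$ and by $\varepsilon'$ on its complement, one has for every $n\in\N$ and $h \in G$
\[
	\frac{1}{m(hF_n)} \int_{hF_n} f_\delta(g) \dd m(g) \leqslant M \cdot \frac{m\kl T_{\delta,\varepsilon'}^{(m)}(x) \cap hF_n \kr}{m(hF_n)} + \varepsilon' \,.
\]
Taking $\sup_{h \in G}$ and then $\limsup_{n\to\infty}$ gives $A_\delta(x) \leqslant M \cdot \BDu_\Fc(T_{\delta,\varepsilon'}^{(m)}(x)) + \varepsilon' < (M+1)\varepsilon' = \varepsilon/2 < \varepsilon$.

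For the converse, suppose the integral criterion holds and let $\varepsilon > 0$. Pick $\delta > 0$ with $A_\delta(x) < \varepsilon^2$. Markov's inequality, applied pointwise on each $hF_n$, yields
\[
	\varepsilon \cdot \frac{m\kl T_{\delta,\varepsilon}^{(m)}(x) \cap hF_n \kr}{m(hF_n)} \leqslant \frac{1}{m(hF_n)} \int_{hF_n} f_\delta(g) \dd m(g) \,,
\]
so after taking $\sup_h$ and $\limsup_n$ we obtain $\varepsilon \cdot \BDu_\Fc(T_{\delta,\varepsilon}^{(m)}(x)) \leqslant A_\delta(x) < \varepsilon^2$, hence $\BDu_\Fc(T_{\delta,\varepsilon}^{(m)}(x)) < \varepsilon$, as required.

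There is no serious obstacle here; the equivalence is essentially a textbook Chebyshev argument relating the mean of a bounded function to the measure of an exceedance set. The only subtleties are (i) choosing the parameter relations, namely $\varepsilon/(2(M+1))$ in the forward direction and $\varepsilon^2$ in the reverse, so that the strict inequality survives the passage to $\limsup_n \sup_h$, and (ii) verifying measurability of $f_\delta$, which reduces immediately to continuity of $\diam_m$ on $(\Kc(X), d_\Hc)$ as recorded above.
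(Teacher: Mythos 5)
Your proof is correct and follows essentially the same route as the paper: both directions rest on the same Chebyshev/Markov-type splitting of the ergodic average over the exceedance set $T_{\delta,\cdot}^{(m)}(x)$ and its complement, with only cosmetic differences (the paper normalises $\diam_m(X)=1$ and argues the converse by contraposition, while you carry the constant $M$ and argue directly). Your added remark on the continuity, hence measurability, of $g \mapsto \diam_m(\alpha(g,B_\delta(x)))$ is a small point the paper leaves implicit.
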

\begin{proof}
	Without loss of generality assume that $\diam_{m+1}(X) = 1$.
	\begin{description}
		\item[$\Longrightarrow$]
			Let $\varepsilon > 0$.
			There is $\delta > 0$ such that
			\[ \BDu_{\Fc} \kl \ml { g \in G } \mm { \diam_{m+1}\kl \alpha(g, B_\delta(x)) \kr > \frac{\varepsilon}{2} } \mr \kr < \frac{\varepsilon}{2} \,.\]
			Let $H := \ml { g \in G } \mm { \diam_{m+1}\kl \alpha(g, B_\delta(x)) \kr > \frac{\varepsilon}{2} } \mr$ and calculate
			\begin{align*}
				A_\delta(x) &= \limsup_{n\to\infty} \sup_{h\in G} \frac{1}{m(F_n)} \int_{F_n \cdot h} \diam_{m+1}\kl \alpha(g,B_\delta(x)) \kr \dd m(g) \\
				&\leqslant \diam_{m+1}(X) \cdot \BDu_{\Fc}\kl H \kr + \frac{\varepsilon}{2} \cdot  \BDu_{\Fc}\kl H^c \kr \\
				&< 1\cdot \frac{\varepsilon}{2} + \frac{\varepsilon}{2} \cdot 1 = \varepsilon \,.
			\end{align*}
		\item[$\Longleftarrow$]
			We proceed by contraposition.
			Assume that there is an $\eta > 0$ such that
			\[ \BDu_{\Fc} \kl \ml { g \in G } \mm { \diam_{m+1}\kl \alpha(g, B_\delta(x)) \kr > \eta } \mr \kr \geqslant \eta \,.\]
			for any $\delta > 0$.
			Then
			\begin{align*}
				A_\delta(x) &=\limsup_{n\to\infty} \sup_{h\in G}\frac{1}{m(F_n)} \int_{F_n \cdot h} \diam_{m+1}\kl \alpha(g,B_\delta(x)) \kr \dd m(g) \\ 
				&\geqslant \eta \cdot \BDu_{\Fc}\kl \ml { g \in G } \mm { \diam_{m+1}\kl \alpha(g, B_\delta(x)) \kr > \eta } \mr \kr \\
				&\geqslant \eta \cdot \eta = \varepsilon
			\end{align*}
			for any $\delta > 0$, where $\varepsilon := \eta^2$. \qedhere
	\end{description}
\end{proof}

The same proof as for \reflem{frequentstabilitycompact} shows that
\begin{provenlem}
	\label{lem:diammeancompact}
	If every point $x \in X$ is Banach $\Fc$-diam-mean $m$-equicontinuous,
	then for every $\varepsilon > 0$ there is $\delta > 0$ such that
	$\BDu_\Fc(T_{\delta,\varepsilon}^{(m)}(x))< \varepsilon$ holds for any $x \in X$.
\end{provenlem}

As before, we let
\[ J := \ml K \in \Kc(X) \mm \tint(K) \neq \emptyset \mr \,. \]

\begin{prop}
	\label{prop:notalmostsurelym1impliesnotbanachdiammean}
	Let $(X,\alpha,G)$ be minimal and let $(Y,\beta,G)$ be the MEF with factor map $\pi: (X,\alpha,G) \to (Y,\beta,G)$
	and let $\Fc = (F_n)_{n\in\N}$ be any F\o lner sequence.

	If $\pi$ is not almost surely $m$:1
	then there is $\varepsilon > 0$ such that for any $B \in J$ there is $(a_n)_{n\in\N} \in G^\N$ such that the shifted F\o lner sequence $\Fc_B := (F_{B,n})_{n\in\N}$, where $F_{B,n} = a_n F_n$,
	satisfies 
	\[ \limsup_{n\to \infty} \frac{m\kl \ml g \in a_nF_n \mm \diam_{m+1}\kl \alpha(g,B) \kr > \varepsilon \mr \kr}{m(a_nF_n)} > 2\varepsilon \,. \]
\end{prop}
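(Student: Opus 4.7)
The proof parallels that of \refprop{notalmostm1impliesnotbanachfrequentlystable} verbatim, except that the uniform lower bound on fiber diameters supplied there by \reflem{dichotomygenerically11} must be replaced by the measure-theoretic analogue from \reflem{dichotomyalmostsurely11}. Applying the latter yields $\varepsilon_0 > 0$ with $\nu\kl\ml y \in Y \mm \pi_*\diam_{m+1}(y) > \varepsilon_0 \mr\kr > \varepsilon_0$; by $\sigma$-additivity I pick a threshold $c^* > \varepsilon_0$ so that the closed (by \reflem{pidiam}) set $W^* := \ml y \in Y \mm \pi_*\diam_{m+1}(y) \geq c^* \mr$ still satisfies $\nu(W^*) > \varepsilon_0$. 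Setting $\varepsilon := \varepsilon_0/6$ gives $c^* > 6\varepsilon$ and $\nu(W^*) > 6\varepsilon > 2\varepsilon$.

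Next, fix $B \in J$ and a F\o lner sequence $\Fc = (F_n)_{n\in\N}$. Since $(Y,\beta,G)$ is uniquely ergodic by \refcor{uniqueergodicityofminimalequicontinuous}, and $\Fc$ admits a tempered subsequence $(F_{n_k})_{k\in\N}$, the Lindenstrauss Ergodic \refthm{lindenstrauss} applied to $\mathbf{1}_{W^*} \in L^1(\nu)$ provides a $\nu$-full measure set of base points $y_* \in Y$ for which
\[ \lim_{k\to\infty} \frac{m\kl\ml g \in F_{n_k} \mm \beta(g,y_*) \in W^* \mr\kr}{m(F_{n_k})} \;=\; \nu(W^*). \]
Any such $y_*$ has orbit visiting the non-empty set $W^*$, so by the $G$-equivariance of $\card\circ\pi\inv$ (recall that $\alpha(g,\cdot)$ is a homeomorphism) automatically $\card(\pi\inv(\{y_*\})) \geq m+1$. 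Fixing this $y_*$, I copy the construction from \refprop{notalmostm1impliesnotbanachfrequentlystable}: for each $n$, Arzelà-Ascoli furnishes $\eta_n > 0$ so that $F_n$-iterates preserve $\eta_n$-closeness up to $\varepsilon$; choose a finite $\eta_n$-net $\ml x_1,\ldots,x_k \mr \subseteq \pi\inv(\{y_*\})$; and \refthm{multivariateregionallyproximal} together with \reflem{sensitivitycharacterizationofregionallyproximal} produce $a_n \in G$ and $z_1,\ldots,z_k \in B$ with $d(x_i,\alpha(a_n,z_i)) < \eta_n$. A double application of \reflem{densenessanddiam} (first passing from $\pi\inv(\beta(g,y_*))$ to $\ml \alpha(g,x_i) \mr$, then to $\ml \alpha(a_n g, z_i)\mr \subseteq \alpha(a_n g, B)$) yields, for every $g \in F_n$ with $\beta(g,y_*) \in W^*$,
\[ \diam_{m+1}(\alpha(a_n g, B)) \;\geq\; \pi_*\diam_{m+1}(\beta(g,y_*)) - 4\varepsilon \;\geq\; c^* - 4\varepsilon \;>\; \varepsilon. \]

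Combining these two ingredients with the translation-invariance of $m$ and passing to the tempered subsequence one obtains
\[ \limsup_{n\to\infty} \frac{m\kl\ml g \in a_nF_n \mm \diam_{m+1}(\alpha(g,B)) > \varepsilon \mr\kr}{m(a_nF_n)} \;\geq\; \lim_{k\to\infty} \frac{m\kl\ml g \in F_{n_k} \mm \beta(g,y_*) \in W^* \mr\kr}{m(F_{n_k})} \;=\; \nu(W^*) \;>\; 2\varepsilon, \]
which is the claim. The main technical obstacle is precisely this density lower bound: $W^*$ is closed but may have empty interior, so the usual one-sided Portmanteau inequalities available for uniquely ergodic systems only control orbital visit frequencies \emph{from above}. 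To recover a sharp lower bound at a particular $y_*$ one therefore has to invoke Lindenstrauss (via \refcor{uniqueergodicityofminimalequicontinuous} for ergodicity), which forces the detour through a tempered subsequence.
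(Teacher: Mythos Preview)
Your proof is correct and follows the same strategy as the paper's: invoke \reflem{dichotomyalmostsurely11}, select a base point $y_*$ via the Lindenstrauss Ergodic Theorem, and reuse the geometric construction from \refprop{notalmostm1impliesnotbanachfrequentlystable} together with \reflem{densenessanddiam}. One small remark: the detour to a \emph{closed} level set $W^*$ is unnecessary---the open superlevel set $\{y:\pi_*\diam_{m+1}(y)>5\varepsilon\}$ (open by \reflem{pidiam}) works equally well for Lindenstrauss, and in fact \refcor{uniformergodicandbanach} applied to a Urysohn minorant supported in that open set would yield the required lower Banach density for \emph{every} $y_*$, so the ``main technical obstacle'' you flag can actually be sidestepped.
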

\begin{proof}
	This argument follows and generalises the proof of \cite[Theorem 3.1]{frequentlystableisalmostautomorphic}.

	Let $B \in J$ be arbitrary.
	By \reflem{dichotomyalmostsurely11}, there is $\varepsilon > 0$ such that
	\[ \mu\kl \ml y \in Y \mm \pi_*\diam_{m+1} \kl \ml y \mr \kr  > 5\varepsilon \mr \kr > 2\varepsilon \,. \]
	Now let $n \in \N$ be arbitrary.
	By the Lindenstrauss Ergodic \refthm{lindenstrauss}, there is $y_* \in Y$ such that
	\begin{align}
		\label{eq:densitiyofy*}
		\Dl_\Fc\kl \ml g \in G \mm \pi_*\diam_{m+1}\kl \ml \beta(g,y_*) \mr \kr > 5\varepsilon \mr \kr > 2\varepsilon \,.
	\end{align}
	Pick any $y_* \in Y$.
	As in \refprop{notalmostm1impliesnotbanachfrequentlystable} there is $a_n \in G$ such that
	\[ B_{2\varepsilon}\kl \alpha\kl a_n g, B \kr  \kr\supseteq \pi\inv\kl \ml \beta(a_n g,y_*) \mr \kr \]
	for any $g \in F_n$.
	Let $H := \ml g \in G \mm \pi_*\diam_{m+1}\kl \ml \beta(g,y_*) \mr \kr > 5\varepsilon \mr$.
	Now, \refe{eq:densitiyofy*} implies that
	\[ \liminf_{n\to\infty} \frac{m\kl a_nF_n \cap H \kr}{m(a_n F_n)}  > 2\varepsilon \,.\]
	By \reflem{densenessanddiam}, we conclude $\diam_{m+1}\kl \alpha(h, B) \kr > \varepsilon$ for $h \in a_nF_n \cap H$.
	Define a shifted F\o lner sequence $\tilde \Fc_B = (\tilde F_n)_{n \in \N}$ with $\tilde F_n = a_nF_n$.
	Then
	\begin{align*}
		\diam_{m+1}\kl \alpha(f,B) \kr > \varepsilon
	\end{align*}
	for any $n \in \N$ and $f \in \tilde F_n \cap H$ and so immediatly
	\[ \limsup_{n\to \infty} \frac{m\kl \ml g \in a_nF_n \mm \diam_{m+1}\kl \alpha(g,B) \kr > \varepsilon \mr \kr}{m(a_nF_n)} > 2\varepsilon \,. \qedhere \]
\end{proof}
\begin{cor}
	Setting $B := \overline{B_{\frac{\delta}{2}}(x)}$ in \refprop{notalmostsurelym1impliesnotbanachdiammean} yields that $\BDu_\Fc\kl T^{(m)}_{\delta,\varepsilon}(x) \kr > \varepsilon$ for any $\delta > 0$ and $x \in X$.
	So if $\pi$ is not almost surely $m$:$1$, then no $x \in X$ can be Banach $\Fc$-diam-mean $(m+1)$-equicontinuous.
\end{cor}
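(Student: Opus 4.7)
The plan is to feed the closed ball $B := \overline{B_{\delta/2}(x)}$ into Proposition~\ref{prop:notalmostsurelym1impliesnotbanachdiammean} and translate its density conclusion into a statement about the set $T_{\delta,\varepsilon}^{(m+1)}(x)$. First, observe that $B$ contains the open ball $B_{\delta/2}(x)$, so $B$ has non-empty interior and therefore lies in $J$. Applying the proposition produces an $\varepsilon > 0$ (depending only on the MEF factor $\pi$, not on $\delta$ or $x$) and a sequence $(a_n)_{n \in \N} \in G^\N$ such that the shifted F\o lner sequence $\Fc_B = (a_n F_n)_{n \in \N}$ satisfies
\[ \limsup_{n \to \infty} \frac{m(\{g \in a_n F_n : \diam_{m+1}(\alpha(g, B)) > \varepsilon\})}{m(a_n F_n)} > 2\varepsilon. \]

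Next I would exploit the inclusion $B \subseteq B_\delta(x)$, which by monotonicity of $\diam_{m+1}$ in its set argument yields $\diam_{m+1}(\alpha(g, B)) \leq \diam_{m+1}(\alpha(g, B_\delta(x)))$ for every $g \in G$, and hence
\[ \{g \in G : \diam_{m+1}(\alpha(g, B)) > \varepsilon\} \subseteq T_{\delta,\varepsilon}^{(m+1)}(x). \]
Since the upper Banach density dominates the limsup of average sizes along any single sequence of shifts $(a_n F_n)$, combining this inclusion with the density estimate just extracted from the proposition gives
\[ \BDu_\Fc(T_{\delta,\varepsilon}^{(m+1)}(x)) \geq \limsup_{n \to \infty} \frac{m(T_{\delta,\varepsilon}^{(m+1)}(x) \cap a_n F_n)}{m(a_n F_n)} > 2\varepsilon > \varepsilon. \]
This establishes the first half of the corollary, and importantly the bound holds for every $\delta > 0$ and every $x \in X$.

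The second half then follows by contraposition. If some $x \in X$ were Banach $\Fc$-diam-mean $(m+1)$-equicontinuous, then for the particular $\varepsilon$ supplied by the proposition there would exist a $\delta > 0$ with $\BDu_\Fc(T_{\delta,\varepsilon}^{(m+1)}(x)) < \varepsilon$, contradicting the lower bound just obtained for that same $\delta$. I expect no real obstacle here, since all of the genuinely delicate work (the use of Theorem~\ref{thm:multivariateregionallyproximal} together with the dichotomy of Lemma~\ref{lem:dichotomyalmostsurely11}) has already been carried out inside Proposition~\ref{prop:notalmostsurelym1impliesnotbanachdiammean}. The corollary's role is only to check that $\overline{B_{\delta/2}(x)}$ is an admissible input to the proposition and to unpack the definition of the upper Banach density.
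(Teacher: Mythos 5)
Your proposal is correct and is exactly the argument the paper intends; the corollary is left unproved there precisely because it reduces to the three observations you make (that $\overline{B_{\delta/2}(x)} \in J$, that $\diam_{m+1}$ is monotone under $B \subseteq B_\delta(x)$, and that $\BDu_\Fc$ dominates the limsup along the single shifted sequence $(a_nF_n)_{n\in\N}$). Note also that you have silently corrected what appears to be a typo in the statement: the set should indeed be $T^{(m+1)}_{\delta,\varepsilon}(x)$ rather than $T^{(m)}_{\delta,\varepsilon}(x)$, since the proposition controls $\diam_{m+1}$ and the conclusion concerns diam-mean $(m+1)$-equicontinuity.
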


The following statement, which is the main result of this section,
generalises \cite[Theorem 4.12]{diammeanequicts} to the multivariate case.
\begin{thm}
	\label{thm:mainthmdiamequi}
	Let $(X,\alpha,G)$ be minimal and let $(Y,\beta,G)$ be the MEF with factor map $\pi: (X,\alpha,G) \to (Y,\beta,G)$.
	The following are equivalent:
	\begin{enumerate}[(i)]
		\item \label{item:almostsurelym1}
			$\pi$ is almost surely $m$:1
		\item \label{item:strongdiammean}
			Every $x\in X$ is Banach $\Fc$-diam-mean $(m+1)$-equicontinuous, for any F\o lner sequence $\Fc$.
		\item \label{item:weakdiammean}
			There exists some F\o lner sequence $\Fc$ and a point $x \in X$ such that
			$x$ is weakly $\Fc$-diam-mean $(m+1)$-equicontinuous.
	\end{enumerate}
\end{thm}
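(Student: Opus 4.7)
The plan is to follow the three-step scheme of \refthm{mainthmfreqstable} very closely, but with two adaptations: I replace the open sublevel set $\Omega$ used there by the closed sublevel set $\Omega_{c} := \{K \in \Kc(X) : \diam_{m+1}(K) \leqslant \varepsilon\}$ (so that Portmanteau goes the right way), and I must keep track of the finer quantitative chain $L < \varepsilon < 2\varepsilon < L'$ that the diam-mean setting forces. I will prove \refi{almostsurelym1} $\Rightarrow$ \refi{strongdiammean} $\Rightarrow$ \refi{weakdiammean} $\Rightarrow$ \refi{almostsurelym1}, the middle implication being immediate from $\BDl_\Fc \leqslant \BDu_\Fc$.

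For \refi{almostsurelym1} $\Rightarrow$ \refi{strongdiammean}, fix $\varepsilon > 0$, any F\o lner $\Fc$ and $x \in X$. Consider the nested open sets
\[ U_n := \{y \in Y : \diam_{m+1}(\pi\inv(B_{1/n}(y))) < \varepsilon\}, \]
whose openness is exactly the upper semi-continuity established inside the proof of \reflem{diampreimageofball}. That same lemma gives $\Jc_m \subseteq \bigcup_n U_n$, and since $\nu(\Jc_m) = 1$ continuity of measure supplies $N$ with $\nu(U_N) > 1 - \varepsilon/2$. Outer regularity of $\nu$ and \reflem{urysohn} then yield a continuous $f : Y \to [0,1]$ with $\1_{U_N^c} \leqslant f \leqslant \1_V$ for some open $V \supseteq U_N^c$ of $\nu$-measure $< \varepsilon$. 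Choose $\delta > 0$ so $\pi(B_\delta(x)) \subseteq B_{1/N}(\pi(x))$. Using the $\beta$-invariance of $d_Y$ and equivariance of $\pi$, $\beta(g, \pi(x)) \in U_N$ forces $\alpha(g, B_\delta(x)) \subseteq \pi\inv(B_{1/N}(\beta(g, \pi(x))))$ and hence $\diam_{m+1}(\alpha(g, B_\delta(x))) < \varepsilon$. Therefore $T^{(m+1)}_{\delta, \varepsilon}(x) \subseteq \{g : f(\beta(g, \pi(x))) > 0\}$, and the second half of \refcor{uniformergodicandbanach} bounds the upper $\Fc$-Banach density of the latter by $\nu(V) < \varepsilon$.

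The substantive direction \refi{weakdiammean} $\Rightarrow$ \refi{almostsurelym1} I prove by contraposition. Let $\varepsilon > 0$ be the constant produced by \refprop{notalmostsurelym1impliesnotbanachdiammean}, and suppose some $x$ is weakly $\Fc$-diam-mean $(m+1)$-equicontinuous. Applying weak equicontinuity at threshold $\varepsilon$ gives $\delta > 0$ and, along a subsequence (reindexed as $(F_n)$), shifts $b_n \in G$ with $m(T^{(m+1)}_{\delta,\varepsilon}(x) \cap b_n F_n)/m(b_n F_n) \to L$ for some $L < \varepsilon$. Setting $B := \overline{B_{\delta/2}(x)} \in J$, the inclusion $B \subseteq B_\delta(x)$ ensures that the $b_n F_n$-density of $\{g : \hat\alpha(g,B) \in \Omega_c\}$ has $\liminf \geqslant 1-L$. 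Krylov-Bogolyubov \refthm{krylovbogolyubov} furnishes, along a further subsequence, a $\hat\alpha$-invariant weak-$*$ cluster point $M$ of the corresponding Dirac averages; since $\Omega_c$ is closed (by \reflem{continuityofdiam}), Portmanteau yields $M(\Omega_c) \geqslant 1-L$. By \refprop{supportofgeneratedmeasure} and \refcor{existenceofergodiccomponentsupportedonJ}, an averaging argument over the ergodic decomposition $M = \int M_e \dd E(M_e)$ produces an ergodic $M_e^*$ supported on $J$ with $M_e^*(\Omega_c) > 1 - 2\varepsilon$ (using $1-L > 1-\varepsilon > 1 - 2\varepsilon$).

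Finally, \reflem{decompositionofJ} supplies $B_* \in J$ with $M_e^*(\{K : B_* \subseteq K\}) > 0$, and a second application of \refprop{notalmostsurelym1impliesnotbanachdiammean}, now to $B_*$, delivers a shifted F\o lner sequence $\Fc_{B_*}$ along which the density of $\{g : \diam_{m+1}(\alpha(g,B_*)) > \varepsilon\}$ has $\limsup > 2\varepsilon$; passing to a convergent subsequence with limit $L' > 2\varepsilon$ and then to a tempered sub-subsequence (which exists by \cite[Proposition 1.4]{lindenstrauss} and remains F\o lner with the same density limit) sets the stage. Applying \refthm{lindenstrauss} to the bounded measurable $\1_{\{K : \diam_{m+1}(K) > \varepsilon\}}$ and the ergodic measure $M_e^*$ gives, for $M_e^*$-almost every $K$, convergence of the orbital averages to $M_e^*(\{K : \diam_{m+1}(K) > \varepsilon\}) = 1 - M_e^*(\Omega_c)$; picking $K$ in the intersection of this full-measure set with $\{K : B_* \subseteq K\}$ (positive $M_e^*$-measure), monotonicity of $\diam_{m+1}$ under inclusion makes the averages eventually bounded below by the bad density for $B_*$, i.e.\ by values tending to $L' > 2\varepsilon$. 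Hence $M_e^*(\Omega_c) \leqslant 1 - L' < 1 - 2\varepsilon$, contradicting the earlier bound. The principal obstacle is exactly this bookkeeping: unlike in \refthm{mainthmfreqstable}, where the proposition provides bad-density tending to $1$ and any positive measure of the good set suffices, here both inequalities are strict and close to the threshold $2\varepsilon$, so the strict gap $L < \varepsilon < 2\varepsilon < L'$ must be preserved through Portmanteau, the ergodic-decomposition averaging and the subsequential manoeuvres feeding Lindenstrauss.
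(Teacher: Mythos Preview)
Your argument is correct and mirrors the paper's scheme almost exactly; the only structural differences are that in \refi{almostsurelym1}$\Rightarrow$\refi{strongdiammean} you replace the paper's inner-regularity/finite-cover construction by the nested sets $U_n$, and in \refi{weakdiammean}$\Rightarrow$\refi{almostsurelym1} you work with the closed $\Omega_c$ rather than the paper's open $\Omega$ (both variants are valid, and yours in fact sidesteps a strict-versus-nonstrict subtlety in the Portmanteau step that the paper glosses over). One genuine fix is needed, however: the upper semi-continuity you invoke from the proof of \reflem{diampreimageofball} is established for the \emph{closed}-ball multivalued map $y \mapsto D_\rho(y)$, while the open-ball map $y \mapsto B_\rho(y)$ is not upper hemi-continuous in general, so your sets $U_n$ as written need not be open; defining $U_n := \{y : \diam_{m+1}(\pi^{-1}(D_{1/n}(y))) < \varepsilon\}$ with closed balls (still nested, still covering $\Jc_m$ by the internal argument of that lemma) restores your proof verbatim.
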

\begin{proof}
	The metric $d_Y$ on $Y$ can be assumed to be $\beta$-invariant.
	The unique invariant measure on $(Y,\beta,G)$ will be denoted by $\nu$.
	\begin{description}
		\item[\refi{almostsurelym1} implies \refi{strongdiammean}]
			Let $\varepsilon > 0$, $x \in X$ and $\Fc$ an arbitrary F\o lner sequence.
			Define
			\[ A := \ml y \in Y \mm { \card\kl \pi\inv\kl \ml y \mr \kr \kr \leqslant m } \mr \,.\]
			As $\pi$ is almost surely $m$:1, $\nu(A)=1$.
			By regularity of $\nu$, there is a compact $K \subseteq A$ such that
			$\nu(K) > 1-\varepsilon$.
			By continuity of $d_Y$, the multi-valued map
			\[ D_\cdot(y) : \R^+_0 \longtwoheadarrow Y, \; \eta \longmapsto \ml y' \in Y \mm d_Y(y,y') \leqslant \eta \mr \,.\]
			has a closed graph for any $y \in Y$.
			\refthm{closedgraph} implies that $D_\cdot(y)$ and thus $\pi\inv\circ D_\cdot(y)$ is upper hemi-continuous.
			Therefore,
			\[ E_y := \ml \eta \geqslant 0 \mm \pi\inv\kl D_\eta(y) \kr \subseteq B_\frac{\varepsilon}{2}\kl \pi\inv\kl \ml y \mr \kr\kr\mr \]
			is open for $y \in Y$.
			As $0 \in E_y$, for all $y \in Y$, there is $\eta_y> 0$ such that
			\[ \pi\inv\kl B_{2\eta_y}(y) \kr \subseteq B_\frac{\varepsilon}{2}\kl \pi\inv\kl \ml y \mr \kr \kr \,.\]

			Clearly, $\ml B_{\eta_y}(y) \mm y \in K \mr$ is an open cover of $K$.
			By compactness of $K$, there is a finite $\ml y_1, \ldots, y_n\mr \subseteq K$
			such that $\bigcup_{i=1}^n B_{\eta_{y_i}}(y_i) \supseteq K$.
			We define
			\[ \eta := \min\ml \eta_{y_i} \mm i \in \ml 1, \ldots, n\mr \mr
			\quad\text{and}\quad
			K_\varepsilon := \bigcup_{i=1}^n B_{\eta_{y_i}}(y_i) \,.\]

			By construction, there is $i \in \ml 1, \ldots, n\mr$
			with $B_\eta(y) \subseteq B_{2\eta_{y_i}}(y)$
			for any $y \in K_\varepsilon$.
			The continuity of $\pi$ implies that there is $\delta > 0$ such that $\pi(B_\delta(x)) \subseteq B_{\eta}(\pi(x))$.
			Furthermore, the invariance of $d_Y$ implies
			$\beta(g,B_{\eta}(\pi(x))) = B_{\eta}(\beta(g,\pi(x)))$
			for all $g \in G$. 
			Let $H := \ml g \in G \mm \beta(g,\pi(x)) \in K_\varepsilon \mr$.
			For $g \in H$ there is $i \in \ml 1, \ldots, n \mr$ with 
			$\beta(g,\pi(B_\delta(x))) \subseteq B_\eta\kl \beta(g,\pi(x)) \kr \subseteq B_{2\eta_{y_i}}(y_i)$
			and thus
			\begin{align*}
				\alpha(g,B_\delta(x)) &\subseteq \pi\inv\kl \pi\kl \alpha(g,B_\delta(x)) \kr \kr \\ 
				&\subseteq \pi\inv\kl \beta\kl g, \pi(B_\delta(x)) \kr \kr \\
				&\subseteq \pi\inv\kl B_{2\eta_{y_i}}(y_i) \kr \\
				&\subseteq B_{\frac{\varepsilon}{2}}\kl \pi\inv\kl \ml y_i \mr \kr \kr
			\end{align*}
			Now let $u_1,\ldots,u_{m+1} \in \alpha(g,B_\delta(x))$.
			Then there is $x_1,\ldots,x_{m+1} \in \pi\inv \kl\ml y_i \mr\kr$ such that $d(x_j,u_j) < \frac{\varepsilon}2$.
			As $\pi\inv \kl\ml y_i \mr\kr$ contains only $m$ points, the pigeon hole principle yields the existence of $j \neq k$ such that $d(u_j,u_k) < \varepsilon$. 
			This implies now that $\diam_{m+1}\kl \alpha(g,B_\delta(x)) \kr < \varepsilon$ for any $g \in H$, i.e.
			\begin{align}
				H \subseteq \ml g \in G \mm \diam_{m+1}(\alpha(g,U)) \leqslant \varepsilon \mr \,.
				\label{eq:hittingtimescontained}
			\end{align}
			
			However, $H$ has a lower Banach density of at least $1-\varepsilon$ as the following simple application of the Uniform Ergodic \refthm{uniformergodicthm} shows.
			As $K_\varepsilon$ is open and $K$ is closed,
			Urysohn's \reflem{urysohn} shows that there is a continuous function $f:X\to[0,1]$ such that $f({K_\varepsilon^c}) = \ml 0 \mr$
			and $f(K) = \ml 1 \mr$.

			\refcor{uniformergodicandbanach} implies
			\begin{align*}
				\BDu_\Fc\kl H^c \kr < \varepsilon \,.
			\end{align*}
			Now \refe{eq:hittingtimescontained} implies that $x$ is Banach $\Fc$-diam-mean $(m+1)$-equicontinuous.
		\item[\refi{strongdiammean} implies \refi{weakdiammean}]
			This is immediate.

		\item[\refi{weakdiammean} implies \refi{almostsurelym1}]
			We proceed by contradiction.
			Fix any F\o lner sequence.
			So assume that $\pi$ is not almost surely $m$:1 and that $x \in X$ is weakly $\Fc$-diam-mean $(m+1)$-equicontinuous.
			Let $\varepsilon > 0$ be according to \refprop{notalmostsurelym1impliesnotbanachdiammean}.

			By \reflem{continuityofdiam}, $\diam_{m+1}$ is continuous, so
			\[ \Omega := \diam_{m+1}\inv\kl \il 0,\varepsilon \kr \kr = \ml K \in \Kc(X) \mm \diam_{m+1}(K) < \varepsilon \mr \]
			is open.
			Recall that the action of $\alpha$ on $X$ induces an action $\hat\alpha$ on $\Kc(X)$ by
			\[ \hat\alpha(g,K) = \ml \alpha(g,k) \mm k \in K \mr \,. \]
			By assumption on $x$, there is $\delta > 0$ and $(b_n)_{n\in\N} \in G^\N$ such that
			\begin{align}
				\label{eq:contradictionassumptiondiammean}
				\lim_{n\to\infty} \frac{\kl \ml g \in b_nF_n \mm \diam_{m+1}(\alpha(g,B_\delta(x))) < \varepsilon \mr \kr}{m(b_nF_n)} > 1 - \varepsilon \,.
			\end{align}
			Let $B := \overline{B_{\frac{\delta}{2}}(x)} \subseteq B_\delta(x)$.
			As in the proof of \refthm{mainthmfreqstable} \enquote{\refi{weakfrequentmstability} implies \refi{almostm1}} there is
			an ergodic measure $M^*_e$ and $B_* \in J$ such that $M^*_e( \ml K \in J \mm B_* \subseteq K \mr) > 0$ and
			\begin{align}
				\label{eq:contradictionstepdiammean}
				M^*_e(\Omega) > 1-\varepsilon \,.
			\end{align}
			Let $\Fc_{B_*}$ be the shifted F\o lner sequence given by \refprop{notalmostsurelym1impliesnotbanachdiammean}.
			The Lindenstrauss Ergodic \refthm{lindenstrauss} applied to a tempered subsequence of $\Fc_{B_*}$
			and $\1_{\Omega^c}$
			implies that
			\[ \frac{1}{m(F_{B_*,n})} \int_{F_{B_*,n}} \1_{\Omega^c}\kl \hat\alpha(g,K) \kr \dd m(g) \xlongrightarrow{n\to\infty} M_e^*(\Omega^c) \]
			For almost all $K \in \Kc(X)$.
			As $M^*_e( \ml K \in J \mm B_* \subseteq K \mr) > 0$, we can assume that $K \supseteq B_*$ and this implies
			\begin{align*}
				\label{eq:contradictiondiammean}
				M^*_e(\Omega^c) > 2\varepsilon \,.
			\end{align*}
			This yields a contradiction to \refe{eq:contradictionstepdiammean}.
			\qedhere
	\end{description}
\end{proof}

\bibliographystyle{plain}
\bibliography{Literatur}
\end{document}